\renewcommand{\div}{\operatorname{div}}
\newcommand{\RT}{\mathcal{RT}}
\newcommand{\jump}[1]{[\![ #1]\!]}
\newcommand{\gjump}[1]{[ #1]}
\newtheorem{remark}{Remark}
\renewcommand{\div}{{\hbox{div}}}
\newcommand{\vsf}{\vspace*{.4cm}}
\newcommand{\dx}{\, dx}
\newcommand{\ds}{\, ds}
\newcommand{\T}{\mathcal{T}_h}
\newcommand{\C}{\mathcal{C}_h}
\newcommand{\D}{\mathcal{D}_h}
\newcommand{\M}{\mathcal{M}_h}
\newcommand{\F}{\mathcal{F}_h}
\newcommand{\N}{\mathcal{N}_h}
\newcommand{\nrmf}[1]{\|#1\|_{\mathcal{M}_h}}
\newcommand{\Ker}{\mathrm{Ker}\, }
\newtheorem{assumption}{Assumption}
\renewcommand{\div}{{\hbox{div}}}
\newcommand{\IRT}{\mathcal{IRT}}
\newcommand{\Fg}{\mathcal{F}_g}
\newcommand{\nrmL}[2][1]{\|#2\|_{#1}}
\newcommand{\nrmD}[1]{\|#1\|_{\mathcal{D}_h}}
\newtheorem{example}{Example}[section]
\title{Elliptic interface problem approximated by CutFEM: I. Conservative flux recovery and numerical validation of adaptive mesh refinement}
\author{Daniela Capatina\footnotemark[1] \and Aimene Gouasmi\footnotemark[1] \and Cuiyu He\footnotemark[2]}
\begin{document}
%
%
\maketitle
\renewcommand{\thefootnote}{\fnsymbol{footnote}}
\footnotetext[1]{LMAP CNRS UMR 5142, University of Pau, 64013 Pau, France}
\footnotetext[2]{The University of Georgia, Athens, GA 30602, USA}

\renewcommand{\thefootnote}{\arabic{footnote}}
\slugger{sinum}{xxxx}{xx}{x}{x--x}

\begin{abstract}
We study an elliptic interface problem with discontinuous diffusion coefficients on unfitted meshes using the CutFEM method. Our main contribution is the reconstruction of conservative fluxes from the CutFEM solution and their use in a posteriori error estimation. We introduce a hybrid mixed formulation with locally computable Lagrange multipliers and reconstruct the flux in the immersed Raviart-Thomas space. Based on this, we propose a new a posteriori error estimator that includes both volume and interface terms. We state its robust reliability and local efficiency, and validate the approach through numerical experiments.
\end{abstract}

\begin{keywords}
Elliptic interface problem; CutFEM; Conservative flux reconstruction; A posteriori error estimation; Adaptive finite element methods; Immersed Raviart-Thomas elements

\end{keywords}

\begin{AMS}
65N12, 65N15, 65N30, 65N50, 65N85
\end{AMS}

\pagestyle{myheadings}
\thispagestyle{plain}
\markboth{}{Daniela Capatina, Aimene Gouasmi and Cuiyu He}
%
\section{Introduction}\label{sec:Introduction}
%
Local reconstruction of conservative fluxes from finite element solutions plays a key role in applications such as a posteriori error estimation \cite{Ai:07b, braess2008equilibrated, Ve:09, BFH:14, ErnVo2015, CaCaZh:20, cai2021generalized}  and enforcing flux conservation in continuum mechanics \cite{odsaeter2017postprocessing, ern2009accurate, vohralik2013posteriori}. These techniques are essential for accurately representing physical fluxes -such as heat, mass, or momentum- in solid mechanics and porous media.

In this work, we consider an elliptic problem with an interface that is not aligned with the finite element mesh, characterized by discontinuous diffusion coefficients and standard transmission conditions at the interface; we allow for a jump in the normal flux across the interface. We use the CutFEM method (cf.~\cite{CutFEM2, CutFEM}) to solve the interface problem -a Nitsche type formulation that is robust with respect to discretization, diffusion contrasts, and mesh/interface geometry, owing to the inclusion of additional stabilization terms.

Our goal is to reconstruct conservative fluxes and employ them in a posteriori error analysis and adaptive mesh refinement. To the best of our knowledge, these topics remain largely unexplored in the context of CutFEM. While conservative flux reconstruction for CutFEM solutions has been investigated in~\cite{capatina_he2021flux} for Poisson boundary problems on unfitted meshes, no such developments exist for interface problems.

Regarding the reconstruction method, we have chosen to generalize an approach previously developed for the Poisson problem in \cite{Dana2016} and then extended to diffusion problems in \cite{Aimene}.
The key idea is to introduce a hybrid mixed formulation whose primal solution is equivalent to the discrete CutFEM solution, while also incorporating an additional Lagrange multiplier defined on mesh edges to correct the normal trace of the flux. It should be noted that we do not solve any mixed problem for the reconstruction, contrary to other existing techniques such as \cite{vohralik2011guaranteed}. The Lagrange multiplier can be computed locally on element patches.

The extension of the approach developed in~\cite{Dana2016, Aimene} to elliptic interface problems using CutFEM on unfitted meshes raises several important questions.

The first question concerns the well-posedness of the hybrid mixed formulation. To ensure the stability, convergence, and robustness of the numerical method, the constants involved in the analysis ideally should be independent of the discretization parameters, the diffusion coefficients, and the mesh/interface geometry. Following the approach in~\cite{Aimene} for fitted meshes, we propose a mixed formulation with Lagrange multipliers assigned separately to the edges within each individual subdomain. In the case of cut elements, this results in two distinct multipliers defined over each cut edge that has non-zero intersections with all subdomains. We demonstrate that the mixed formulation possesses several key properties: its primal solution is equivalent to the original discrete CutFEM solution, it is fully robust with respect to both numerical and physical parameters, and it allows the multipliers to be computed locally.

The second question concerns the reconstruction of numerical fluxes in cells cut by the interface, where appropriate notions of discrete conservation and transmission conditions must be defined. Although multiple definitions of fluxes are possible, this issue is closely tied to a third question: the development of a posteriori error estimators based on equilibrated fluxes. We propose a global error estimator consisting of two parts: a standard term -the weighted $L^2$-norm of the difference between the equilibrated and numerical fluxes -and a new interface term that accounts for discontinuities in the approximate solution across $\Gamma$. The main challenge lies in proving both the (sharp) reliability and the local efficiency of the estimator, with constants that remain robust with respect to the diffusion coefficients and the mesh-interface configuration.

To enforce both local conservation and normal trace continuity in the cut cells, we define a unique flux \(\sigma_h\) satisfying \([\sigma_h \cdot n_{\Gamma}] = 0\) across the interface \(\Gamma\) and \(\div \sigma_h = -f_h\). The Lagrange multipliers introduced earlier serve to correct the normal trace of the numerical flux from the CutFEM solution. For the sake of local efficiency in the a posteriori error analysis, the flux reconstruction is performed in the immersed Raviart-Thomas space recently introduced in~\cite{IRT}.

The immersed finite element (IFE) method \cite{li1998immersed} aims to modify traditional finite element spaces in order to recover optimal approximation capabilities on unfitted meshes. Notably, the IFE method retains the same degrees of freedom as traditional finite element methods and can revert to the conventional finite element method when the interface is absent. This characteristic, where IFE spaces are isomorphic to standard finite element spaces defined on the same mesh, is particularly beneficial for problems involving moving interfaces \cite{guo2021solving}.

The lowest-order immersed Raviart-Thomas space \(\IRT^0\)~\cite{guo2021solving, IRT} was developed to handle unfitted meshes by modifying standard Raviart-Thomas functions~\cite{raviart1977mixed} to maintain optimal approximation properties on cut elements. Functions in \(\IRT^0\) are piecewise \(\mathcal{RT}^0\), enforce strong continuity of the normal trace across the interface, and incorporate weak continuity of tangential flux. However, they only satisfy weak continuity on cut edges, and thus \(\IRT^0\) is not conforming in \(H(\div, \Omega)\), which introduces an additional a posteriori error term on the cut edges. The method also accommodates non-homogeneous transmission conditions.

A detailed theoretical analysis -presented in~\cite{Article2}- establishes the robust reliability and local efficiency of the proposed a posteriori error estimator, with constants that depend explicitly on the diffusion coefficients and the mesh/interface configuration. In the present paper, we summarize the main results of this analysis and validate them through a series of numerical experiments.

The paper is organized as follows. The model problem and relevant notation are introduced in Section~\ref{sec:Continuous_pb}. Section~\ref{sec:Discrete_pb} presents the finite element discretization on unfitted meshes using CutFEM and sets the foundation for flux recovery via an equivalent mixed formulation with locally computable Lagrange multipliers. In Section~\ref{sec:flux}, we describe the local flux reconstruction in the immersed Raviart-Thomas space and establish the local conservation property. Section~\ref{sec: A posteriori_IRT} applies the reconstructed fluxes to the a posteriori error analysis, where we define error estimators and state their sharp reliability and local efficiency. Section~\ref{sec:num_sim} reports several numerical experiments that confirm the theoretical results. The paper concludes with an appendix detailing the numerical implementation of the immersed Raviart-Thomas space.

\section{The continuous problem and notation}\label{sec:Continuous_pb}

Let $\Omega$ be a 2D polygonal domain and $\Gamma$ a sufficiently smooth interface separating $\Omega$ into two disjoint subdomains:$\bar \Omega=\bar{\Omega}^{1}\cup \bar{\Omega}^{2}$, $\partial\Omega^{1}\cap \partial\Omega^{2}=\Gamma$. We denote by $n_{\Gamma}$ the unit normal vector to $\Gamma$ oriented from $\Omega^1$ to $\Omega^2$.
We consider the following  model problem:
\begin{equation}\label{eq: continuous_problem_weak}
	\left\{
	\begin{aligned}
		-div(K\nabla u)=f \quad &\text{in }\Omega^i\,\, (i=1,2),\\
		u=0 \quad & \text{on }\partial\Omega \\ 
		\gjump{u}=0,\,\,\gjump{K\nabla u\cdot n_{\Gamma}}=g  \quad &\text{on }\Gamma.
	\end{aligned}
	\right.
\end{equation}
The jumps across $\Gamma$ are given by 
$$\gjump{u}=u_1-u_2,\quad \gjump{K\nabla u\cdot n_{\Gamma}}=(K_1\nabla u_1-K_2\nabla u_2)\cdot n_{\Gamma},$$ 
where $u_{|\Omega^i}=u_i$ and $K_{|\Omega^i}=K_i$, for $i=1, 2$. We suppose $f\in L^2(\Omega)$, $g \in L^2(\Gamma)$ and, for the sake of simplicity, here we assume $K_i=k_i I$ with $k_i>0$, for $i=1,2$. The approach can be extended to piecewise constant positive definite tensors $K$ and to other boundary conditions on $\partial \Omega$. 

We consider the following weak formulation of problem \eqref{eq: continuous_problem_weak}, which clearly has a unique solution: Find $u\in H^1_0(\Omega) $ such that

\begin{equation}\label{eq: continuous_problem}
\int_{\Omega}K\nabla u \cdot \nabla v\dx= \int_{\Omega}f v\dx+\int_{\Gamma}gv\ds\quad \forall v\in H^1_0(\Omega).
\end{equation}  

We next introduce some notation for the finite element approximation of \eqref{eq: continuous_problem}. Let $\mathcal{T}_h$ denote a regular triangular mesh of $\Omega$, whose elements are closed sets, and $\mathcal{F}_h$ be the set of edges. The diameter of $T \in \mathcal{T}_h$ (and the length of $F \in \mathcal{F}_h$) is denoted $h_T$ (and $h_F$). For an interior edge $F$, $n_F$ is a fixed unit normal vector to $F$, oriented from $T_F^-$ to $T_F^+$, where $T_F^-$ and $T_F^+$ are the two triangles that share $F$. If $F \subset \partial \Omega$, then $n_F$ is the outward normal vector to $\Omega$ while if $F \subset \Gamma$, then $n_F = n_{\Gamma}$. For $\omega \subset \mathbb{R}^d$ with $d = 1,2 $, we denote the $L^2(\omega)$-norm by $\|\cdot\|_\omega$ and the $L^2(\omega)$-orthogonal projection onto $P^{{m}}(\omega)$ by $\pi^{{m}}_{\omega}$, for ${m} \in \mathbb{N}$. For $i = 1, 2$, we define:
\begin{equation*}
	\mathcal{T}_h^{i} = \big\{ T \in \mathcal{T}_h \ ; \ T \cap \Omega^{i} \neq \emptyset \big\}, \quad \mathcal{F}_h^{i} = \big\{ F \in \mathcal{F}_h \ ; \ F \cap \Omega^{i} \neq \emptyset \big\}
\end{equation*}
and we set $\Omega_h^i = \displaystyle \bigcup_{T \in \mathcal{T}_h^i} T$; note that $\Omega^i \subset \Omega_h^i$. Let also $\mathcal {N}_h^i$ the set of nodes belonging to $\Omega_h^i$. As regards the cut elements, let:
\begin{equation*}
	\begin{split}
		\mathcal{T}_h^{\Gamma} = \big\{ T \in \mathcal{T}_h \ ; \ T \cap \Gamma \neq \emptyset \big\}, & \quad \mathcal{F}_h^{\Gamma} = \big\{ F \in \mathcal{F}_h \ ; \ F \cap \Gamma \neq \emptyset \big\}, \\
		\mathcal{F}_g^i = \big\{ F \in \mathcal{F}_h^i \ ; \ (T_F^+ \cup T_F^-) \cap \Gamma \neq \emptyset \big\}, & \quad T^i = T \cap \Omega^i \,\,\, \forall T \in \mathcal{T}_h^{\Gamma}, \quad i = 1,2.
	\end{split}
\end{equation*}

In order to focus on flux reconstruction, we assume here that $\Gamma$ is a polygonal line such that for each $T \in \mathcal{T}_h^{\Gamma}$, the intersection $\Gamma_T := T \cap \Gamma$ is a line segment. For a function $v \in L^{2}(\Omega)$, sufficiently smooth on each $\Omega^i$ but discontinuous across $\Gamma$, 
we define the two following means at a point $x \in \Gamma$:
\begin{equation*}
	\{v\}(x) = \omega_1v_1(x) + \omega_2v_2(x), \quad \{v\}^*(x) = \omega_2v_1(x) + \omega_1v_2(x),
\end{equation*}
where the weights $\omega_1$, $\omega_2$ are given (cf. \cite{Ern}) by:
\begin{equation*}
	\omega_1 = \frac{k_2}{k_2 + k_1}, \quad \omega_2 = \frac{k_1}{k_1 + k_2}.
\end{equation*}
It is also useful to introduce the harmonic mean $k_{\Gamma}=k_1 k_2/(k_1 + k_2)$.

Furthermore, we introduce the arithmetic mean and the jump across an interior edge $F \in \mathcal{F}_h^i$, for $1\leq i\leq 2$, as follows:
$$\langle v \rangle = \frac{1}{2} (v^- + v ^+), \quad \jump{v} = v^{-} - v^{+},\quad \jump{\partial_n v} =\jump{\nabla v} \cdot n_F.$$
For boundary edges, we set $\langle v \rangle = \jump{v} = v$. Finally, for $i = 1, 2$, let
\begin{equation*}\label{eq: v_{h,i}}
    V^i = \big\{ v \in H^1(\Omega_h^i) \ ; \ v_{|(\partial\Omega^i \setminus \Gamma)} = 0 \big\}.
\end{equation*} 

We use the symbols $\gtrsim $ and $\lesssim$ to indicate the existence of a generic constant that is independent of the mesh size, the interface geometry and the diffusion coefficients.

\section{The CutFEM approximation}\label{sec:Discrete_pb}

In the numerical approximation of \eqref{eq: continuous_problem}, the transmission conditions on $\Gamma$ are taken into account by means of Nitsche's method \cite{Nitche}. Moreover, we use CutFEM \cite{CutFEM2} to stabilize the approach with respect to the geometry of the interface, by adding a ghost penalty term. 

\subsection{Primal discrete formulation}
We consider the discrete space $\C=\C^1\times \C^2$, where for $1\le i\le 2$,
\begin{equation*}
  \C^i=\big\{ v \in V^i:\,  v_{|T}\in P^1(T),\,\, \forall \ T\in  \mathcal{T}_h^{i} \big\}.
\end{equation*}
Note that the degrees of freedom on the cut cells are doubled.     
We define the following bilinear and linear forms: for $u_h=(u_{h,1},u_{h,2})\in \C$ and $v_h=(v_{h,1},v_{h,2}) \in \C$, let
\begin{equation*}
	\begin{split}
		a_i(u_{h,i},v_{h,i})=&\sum_{T\in\T^i}\int_{T^{i}}k_i\nabla u_{h,i} \cdot \nabla v_{h,i}\dx=\int_{\Omega^i}k_i\nabla u_{h,i} \cdot \nabla v_{h,i}\dx\quad (i=1,\,2),\\
        j_i(u_{h,i},v_{h,i})=&\sum_{F\in\mathcal{F}_g^i} h_F \int_{F}k_i\jump{\partial_n u_{h,i}}\jump{\partial_n v_{h,i}}\ds\quad (i=1,\,2),\\
		a_{\Gamma}(u_h,v_h)=& \sum_{T\in\T^{\Gamma}}\int_{\Gamma_T}\bigg ( \frac{\gamma k_{\Gamma}}{h_T}[u_h][v_h]-\{K\nabla u_h\cdot n_{\Gamma}\}[v_h]-\{K\nabla v_h\cdot n_{\Gamma}\}[u_h]\bigg)\ds,\\
		l_h(v_h)=& \sum_{i=1}^{2}\int_{\Omega^i}f v_{h,i}\dx+\sum_{T\in\mathcal{T}_h^{\Gamma}}\int_{\Gamma_T}g\{v_h\}^*\ds.
	\end{split}
\end{equation*}
Here above, $a_i(\cdot,\cdot)$ and $j_i(\cdot,\cdot)$ represent the main part and the ghost penalty term, respectively, whereas the remaining form $a_{\Gamma}(\cdot.\cdot)$ takes into account the terms which result from the integration by parts, the symmetrization and the Nitsche stabilization. The discrete problem then reads: Find $u_h=(u_{h,1},u_{h,2}) \in \C $ such that
\begin{equation}\label{eq: Cutfem_Formulation}
	a_h(u_h,v_h)=l_h(v_h) \quad \forall v_h\in \C,
\end{equation}
where 
\begin{equation*}\label{eq: Cutfem_Formulation_form}
a_h(u_h,v_h)=\displaystyle{\sum_{i=1}^{2}}\bigg(a_i(u_{h,i},v_{h,i})+\gamma_g j_i(u_{h,i},v_{h,i})\bigg)+a_{\Gamma}(u_h,v_h).	
\end{equation*}
It is known that the stabilization parameters $\gamma >0$ and $\gamma_g>0$ can be chosen independently of the mesh, the interface geometry and the diffusion coefficients. 

For any $v_h\in \C$, we define the norm:
\begin{equation*}\label{eq: norm_h}
\|v_h\|_h^2= \sum_{i=1}^{2}\bigg(\|k_i^{1/2}\nabla v_{h,i}\|^2_{\Omega^i}+j_i(v_{h,i},v_{h,i})\bigg)
+\sum_{T\in \mathcal{T}_h^{\Gamma}}\int_{\Gamma_T}\frac{k_{\Gamma}}{h_T}[v_h]^2 \ds.
\end{equation*}

It is well-known that for $\gamma$ large enough, the bilinear form $a_h(\cdot,\cdot)$ is uniformly $(\C,\|\cdot\|_h)$-coercive. To establish this result, two lemmas are required.  
 \begin{lemma}[Trace Inequality]
     There exists a constant $C_\Gamma>0$  independent of the mesh/interface geometry  such that for any $T\in \T^\Gamma$ and  any $v\in H^1(T)$,
     \begin{equation}\label{eq: trace_in_cut}
         \nrmL[\Gamma_T]{v}^2\leq C_\Gamma\bigg( h_T^{-1}\nrmL[T]{v}^2+ h_T\nrmL[T]{\nabla v}^2\bigg).
     \end{equation}
 \end{lemma}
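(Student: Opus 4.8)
The plan is to isolate the interface integral by a divergence identity built on a \emph{constant} vector field, so that the manner in which $\Gamma$ cuts $T$ never enters the constant. Write $n_{\Gamma}$ for the (constant) unit normal of the chord $\Gamma_T$, oriented from $T^1 = T\cap\Omega^1$ to $T^2 = T\cap\Omega^2$; since $\Gamma_T$ is a straight line segment, it splits $T$ into the two Lipschitz polygons $T^1,T^2$ whose common edge is exactly $\Gamma_T$. For $v\in C^\infty(\overline T)$ (the general case $v\in H^1(T)$ then follows by density), I would apply the Gauss--Green formula to the field $v^2 n_{\Gamma}$ on $T^1$:
\[
\int_{T^1}\div(v^2 n_{\Gamma})\dx=\int_{\partial T^1}v^2\,(n_{\Gamma}\cdot n)\ds,
\]
where $n$ is the outward unit normal of $T^1$. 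On $\Gamma_T$ one has $n=n_{\Gamma}$, hence $n_{\Gamma}\cdot n=1$, whereas on the remaining part $\partial T^1\setminus\Gamma_T\subset\partial T$ one has $|n_{\Gamma}\cdot n|\le 1$ \emph{regardless} of the position or inclination of the cut. This last elementary bound is the crucial geometry-independent ingredient.

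Since $\div(v^2 n_{\Gamma})=2v\,(n_{\Gamma}\cdot\nabla v)$, rearranging the identity to solve for the $\Gamma_T$-integral and using $|n_{\Gamma}\cdot n|\le 1$ gives
\[
\nrmL[\Gamma_T]{v}^2
=2\int_{T^1}v\,(n_{\Gamma}\cdot\nabla v)\dx-\int_{\partial T^1\setminus\Gamma_T}v^2\,(n_{\Gamma}\cdot n)\ds
\le 2\,\nrmL[T]{v}\,\nrmL[T]{\nabla v}+\nrmL[\partial T]{v}^2 .
\]
For the first term I would apply Young's inequality in the scaled form $2ab\le h_T^{-1}a^2+h_T b^2$, obtaining $2\,\nrmL[T]{v}\,\nrmL[T]{\nabla v}\le h_T^{-1}\nrmL[T]{v}^2+h_T\nrmL[T]{\nabla v}^2$. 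For the second term, $\partial T$ is made of the \emph{genuine} edges of $T$, so the classical (scaled) multiplicative trace inequality on the shape-regular triangle $T$ yields $\nrmL[\partial T]{v}^2\lesssim h_T^{-1}\nrmL[T]{v}^2+h_T\nrmL[T]{\nabla v}^2$, with a constant depending only on the shape regularity of $\mathcal T_h$. Summing the two estimates produces \eqref{eq: trace_in_cut} with an explicit constant $C_\Gamma$.

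The only delicate point — and the heart of the statement — is robustness with respect to the interface geometry, i.e.\ guaranteeing that $C_\Gamma$ does not degenerate when $\Gamma$ cuts off an arbitrarily thin sliver of $T$. The divergence argument handles this automatically: the flux of the constant field $n_{\Gamma}$ through any portion of $\partial T$ is controlled by its length times $1$, so no inverse power of the (possibly tiny) cut length ever appears. This is precisely why I would avoid a naive one-dimensional fundamental-theorem-of-calculus argument along fibers normal to $\Gamma_T$: there the fiber lengths can vanish near an endpoint of the chord, introducing factors $\ell^{-1}_{\min}$ that blow up for slivers. With the present approach all interface dependence is absorbed into the harmless bounds $|n_{\Gamma}\cdot n|\le 1$, the diffusion coefficients do not enter at all, and the remaining constants depend only on mesh shape regularity; hence $C_\Gamma$ is independent of $h_T$, of the coefficients, and of the mesh/interface configuration, as claimed.
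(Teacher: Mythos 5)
Your proof is correct. A point of comparison worth knowing: the paper itself does not prove this lemma at all --- it defers to the CutFEM literature (the reference \cite{CutFEM}, where the analogous result is proved for a smooth interface via local flattening/fiber arguments under curvature and resolution assumptions on how $\Gamma$ crosses the mesh). Your argument is therefore a genuinely different, and in this setting more elementary, route: because the paper assumes $\Gamma_T$ is a straight segment, the constant vector field $n_\Gamma$ makes $\div(v^2 n_\Gamma)=2v\,(n_\Gamma\cdot\nabla v)$ exact, the outward normal of $T^1$ on $\Gamma_T$ is exactly $n_\Gamma$, and all interface dependence collapses into $|n_\Gamma\cdot n|\le 1$ on $\partial T^1\setminus\Gamma_T\subset\partial T$; the only nontrivial constant left is the standard scaled trace constant of the shape-regular triangle $T$. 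Your diagnosis of why the naive fiber argument fails (vanishing fiber lengths near the endpoints of the chord for sliver cuts) is exactly the issue the reference's proof must work around with geometric hypotheses, so your remark is apt. Two small points to make the write-up airtight: (i) the divergence theorem needs $T^1$ to be Lipschitz, which holds here because a straight chord cuts the triangle into a triangle or a convex quadrilateral --- state this explicitly, since it is where the straight-segment assumption enters; (ii) in the density step, note that the limit passage is legitimate because $v\mapsto v|_{\Gamma_T}$ is continuous from $H^1(T^1)$ to $L^2(\partial T^1)$, so both sides of the inequality converge along a smooth approximating sequence. Neither point affects correctness; your constant $C_\Gamma$ indeed depends only on shape regularity of $\mathcal T_h$, uniformly in the position of the cut, as the lemma requires.
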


 \begin{lemma}\label{lem: bound_norms_cut}
      There exists a constant $C_g>0$ independent of the mesh-interface intersection for positive parameter $\gamma_g$, such that, for any $v_{h,i}\in \C^i$ ($i=1,2)$,
      \begin{equation*}
          C_g\nrmL[\Omega_h^i]{k_i^{1/2}\nabla v_{h,i}}^2\leq \nrmL[\Omega^i]{k_i^{1/2}\nabla v_{h,i}} ^2+\gamma_g j_i(v_{h,i},v_{h,i}).
      \end{equation*}
 \end{lemma}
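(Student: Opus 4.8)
The plan is to prove the coefficient-free estimate
\[
\|\nabla v_{h,i}\|_{\Omega_h^i}^2 \;\lesssim\; \|\nabla v_{h,i}\|_{\Omega^i}^2 + j_i(v_{h,i},v_{h,i}),
\]
and then recover the stated inequality by multiplying through by the constant $k_i$ and rescaling, writing $j_i=\gamma_g^{-1}(\gamma_g j_i)$, so that $C_g$ ends up depending only on shape regularity and on $\gamma_g$ but not on the mesh–interface intersection. First I would split $\Omega_h^i$ into the uncut cells of $\T^i$, which lie entirely in $\Omega^i$ and therefore contribute directly to $\|\nabla v_{h,i}\|_{\Omega^i}^2$, and the cut cells $T\in\T^\Gamma$, for which $T^i=T\cap\Omega^i$ may be an arbitrarily small sliver. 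Controlling $\|\nabla v_{h,i}\|_{T}^2$ (over the full triangle, as it enters $\|\cdot\|_{\Omega_h^i}$) by $\|\nabla v_{h,i}\|_{T^i}^2$ with a uniform constant is impossible for such slivers, and this is exactly the gap the ghost penalty is designed to close.

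The key structural observation I would exploit is that every $v_{h,i}\in\C^i$ is continuous and piecewise affine on $\Omega_h^i$, so $\nabla v_{h,i}$ is piecewise constant and its jump across an interior edge $F$ is purely normal: since the tangential derivative is continuous, $\nabla v_{h,i}|_{T_F^-}-\nabla v_{h,i}|_{T_F^+}=\jump{\partial_n v_{h,i}}\,n_F$, whence $\bigl|\nabla v_{h,i}|_{T_F^-}-\nabla v_{h,i}|_{T_F^+}\bigr|=|\jump{\partial_n v_{h,i}}|$, a constant on $F$. Combining this with shape regularity ($h_F\approx h_T$, $|T|\approx h_T^2$), a single ghost term satisfies
\[
h_F\,\|\jump{\partial_n v_{h,i}}\|_F^2 \;\approx\; h_T^2\,\bigl|\nabla v_{h,i}|_{T_F^-}-\nabla v_{h,i}|_{T_F^+}\bigr|^2 \;\approx\; \bigl\|\nabla v_{h,i}|_{T_F^-}-\nabla v_{h,i}|_{T_F^+}\bigr\|_{T}^2 ,
\]
so $j_i$ directly controls the $L^2$-difference between the constant gradients of neighbouring cells.

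Next I would run a propagation argument along element chains. For each cut cell $T$, connect it to a cell $T^\ast\in\T^i$ lying entirely inside $\Omega^i$ by a path $T=T_0,T_1,\dots,T_m=T^\ast$ of uniformly bounded length $m\le M$, with consecutive cells sharing an edge $F_j\in\mathcal{F}_g^i$ (such edges qualify because at least one adjacent cell is cut). Telescoping $\nabla v_{h,i}|_T=\nabla v_{h,i}|_{T^\ast}+\sum_j\bigl(\nabla v_{h,i}|_{T_{j-1}}-\nabla v_{h,i}|_{T_j}\bigr)$ and applying the triangle inequality gives $|\nabla v_{h,i}|_T|^2\lesssim|\nabla v_{h,i}|_{T^\ast}|^2+\sum_j|\jump{\partial_n v_{h,i}}_{F_j}|^2$. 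Multiplying by $|T|\approx h_T^2$, using the ghost-term equivalence above, and summing over all cut cells (each ghost edge being reused only a bounded number of times) yields the claim; reinstating $k_i$ and rescaling $j_i$ then fixes $C_g$.

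The hard part will be the geometric content of the chain step: guaranteeing that every cut cell reaches an interior uncut cell through a uniformly bounded chain of ghost edges that stays within $\T^i$, so that cut cells do not accumulate into arbitrarily long strings on the $\Omega^i$-side and $M$ remains independent of the mesh–interface configuration. I would make this precise using shape regularity together with the standing assumption that $\Gamma_T=T\cap\Gamma$ is a single line segment in each cut cell, which prevents pathological interface resolution and bounds the number of consecutive cut cells, thereby pinning down $M$ and hence the final constant.
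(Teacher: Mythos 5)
Your analytic core is sound and, in fact, coincides with the standard argument in the reference the paper defers to for this lemma (the paper gives no proof of its own, citing \cite{CutFEM}): for continuous piecewise affine functions the inter-element gradient jump is purely normal, so a single ghost term $h_F\|\jump{\partial_n v_{h,i}}\|_F^2$ is equivalent, by shape regularity, to the squared $L^2$-difference of the two constant gradients over either neighbouring triangle; telescoping along element chains then transports gradient control from uncut cells of $\T^i$ (which lie entirely in $\Omega^i$, as you correctly note) to the cut cells. The reduction to a coefficient-free inequality is also legitimate here, since $k_i$ is constant on $\Omega_h^i$ and the statement allows $C_g$ to depend on $\gamma_g$.

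The genuine gap is the final geometric claim: that shape regularity plus the standing assumption that $\Gamma_T$ is a single line segment in each cut cell already yields a uniform bound $M$ on the chain length. It does not. Take a finger of $\Omega^i$ of length $L\gg h$ whose width $w$ satisfies $h<w<1.5h$, and triangulate it with two bands of shape-regular triangles whose vertices lie on three horizontal lines, one below the finger, one in its middle, one above it: every triangle meeting the finger then crosses exactly one of its two straight sides (so each cut cell carries a single interface segment, and no cell can cross both sides since that would force a diameter exceeding $w>h$), yet no triangle lies entirely inside the finger. A cut cell near the tip then reaches the nearest uncut cell of $\T^i$ only through $O(L/h)$ cut cells, so $M$ blows up as $h\to 0$ while all your hypotheses hold. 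In the CutFEM literature this bounded-chain (discrete extension) property is an explicit mesh--interface resolution assumption, or is verified for a fixed smooth interface once $h$ is small enough relative to its curvature; it is not a consequence of the single-segment condition. A second, smaller omission sits in the same step: an edge belongs to $\mathcal{F}_g^i$ only if, in addition to having a cut neighbour, it satisfies $F\cap\Omega^i\neq\emptyset$; two consecutive cut cells of $\T^i$ may share an edge contained in $\overline{\Omega^{3-i}}$, which then carries no ghost penalty for the $i$-th component, so your chains must also be shown to avoid such edges. To close the proof, replace your claimed derivation of $M$ by the standard hypothesis that each cut cell connects to an uncut cell of $\T^i$ through a uniformly bounded number of edges of $\mathcal{F}_g^i$; under that assumption the rest of your argument, including the bounded-overlap count in the final summation, goes through.
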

For the proof, we refer the reader to \cite{CutFEM}.

Thanks to the Lax-Milgram theorem, the problem \eqref{eq: Cutfem_Formulation} has a unique solution. 
 
The following a priori error estimate is also known  \cite{CutFEM}.
\begin{theorem} \label{lem: a priori_cutfem}
Let $u_h$ be the CutFEM solution of \eqref{eq: Cutfem_Formulation} and $ u$  the solution of \eqref{eq: continuous_problem}. Assume that \( u _{|\Omega^i}\in H^{1+\varepsilon}(\Omega^i) \), where \(\varepsilon > 0\).
Then, there exists a constant \( C > 0 \) independent of the mesh size $h =\max_{T\in \mathcal T_h}h_T$ such that:
\begin{equation*}
\sum_{i=1}^2 \|u - u_{h,i}\|_{H^1(\Omega^i)} \leq C h^\varepsilon \sum_{i=1}^2\|u\|_{H^{1+\varepsilon}(\Omega^i)}.
\end{equation*}
\end{theorem}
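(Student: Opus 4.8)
The plan is to treat \eqref{eq: Cutfem_Formulation} as a consistent, uniformly coercive Nitsche scheme and run the standard quasi-optimality argument in a suitably augmented norm. First I would establish Galerkin orthogonality. Writing the exact solution as $u=(u_1,u_2)$ with $u_i=u_{|\Omega^i}$ and fixing stable Sobolev extensions $E_iu_i\in H^{1+\varepsilon}(\Omega_h^i)$, integration by parts on each $\Omega^i$ together with the PDE $-\div(k_i\nabla u_i)=f$ and the transmission conditions $[u]=0$, $[K\nabla u\cdot n_\Gamma]=g$ shows $a_h(u,v_h)=l_h(v_h)$ for all $v_h\in\C$: the Nitsche consistency terms in $a_\Gamma$ cancel by the transmission conditions, while the ghost-penalty contributions $j_i(E_iu_i,v_{h,i})$ vanish because a smooth extension has no normal-derivative jumps across interior edges. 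Subtracting \eqref{eq: Cutfem_Formulation} yields $a_h(u-u_h,v_h)=0$ for all $v_h\in\C$.

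Next I would introduce the augmented norm $\|w\|_{h,*}^2=\|w\|_h^2+\sum_{T\in\T^\Gamma}h_T\|\{K\nabla w\cdot n_\Gamma\}\|_{\Gamma_T}^2$, whose extra flux term is exactly what controls the non-symmetric consistency contributions of $a_\Gamma$ that are invisible to $\|\cdot\|_h$. A Cauchy--Schwarz argument, combined with the trace inequality \eqref{eq: trace_in_cut} (applied to the piecewise-constant gradients of the discrete functions) and Lemma~\ref{lem: bound_norms_cut}, gives the boundedness estimate $|a_h(w,v_h)|\lesssim\|w\|_{h,*}\|v_h\|_h$ for any $w$ with $\|w\|_{h,*}<\infty$ and any $v_h\in\C$. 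Combining orthogonality, the stated uniform $(\C,\|\cdot\|_h)$-coercivity of $a_h$, and this boundedness in the usual way produces the quasi-best-approximation bound $\|u-u_h\|_h\lesssim\inf_{v_h\in\C}\|u-v_h\|_{h,*}$.

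It then remains to bound the infimum. I would take $v_h=I_hu\in\C$, a Cl\'ement/Scott--Zhang-type quasi-interpolant defined componentwise on $\C^i$ from the extensions $E_iu_i$ (nodal interpolation being unavailable at regularity $H^{1+\varepsilon}$ with small $\varepsilon$). Standard shape-regular interpolation theory gives $\|\nabla(E_iu_i-I_h^iE_iu_i)\|_{\Omega_h^i}\lesssim h^\varepsilon\|E_iu_i\|_{H^{1+\varepsilon}(\Omega_h^i)}\lesssim h^\varepsilon\|u\|_{H^{1+\varepsilon}(\Omega^i)}$ by extension stability, and the interface-penalty, ghost-penalty, and augmented flux terms are brought to the same $h^\varepsilon$ order by the cut-cell trace inequality \eqref{eq: trace_in_cut}. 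Finally, since both $u$ and $u_{h,i}$ vanish on $\partial\Omega^i\setminus\Gamma$, a Poincar\'e--Friedrichs inequality upgrades the weighted-gradient control contained in $\|\cdot\|_h$ to the full $H^1(\Omega^i)$-norm of the error, which yields the asserted estimate.

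The main obstacle is the interpolation step in the augmented norm at low regularity: one has to control the trace of $K\nabla(u-I_hu)\cdot n_\Gamma$ on the cut segments $\Gamma_T$ when $u_i$ is only $H^{1+\varepsilon}$, which calls for a scaled fractional trace-interpolation estimate rather than the plain $H^1$ trace lemma, and this must be done with all constants independent of how $\Gamma$ meets the mesh. This geometric robustness is precisely what the ghost-penalty stabilization (Lemma~\ref{lem: bound_norms_cut}) and the geometry-independent trace inequality \eqref{eq: trace_in_cut} are designed to deliver.
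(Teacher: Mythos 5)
Your outline reproduces the standard a priori argument for CutFEM, which is exactly what the paper relies on: the theorem is stated without proof and attributed to the reference \cite{CutFEM}, and that reference proceeds as you do — Galerkin orthogonality from strong consistency of the Nitsche form, uniform coercivity in $\|\cdot\|_h$ (with Lemma~\ref{lem: bound_norms_cut} transferring control from $\Omega^i$ to the full fictitious domains $\Omega_h^i$), boundedness in an augmented norm carrying the interface-flux term, quasi-optimality, and Cl\'ement-type interpolation of stable extensions $E_iu_i$ combined with the cut trace inequality \eqref{eq: trace_in_cut}. For $\varepsilon>1/2$ your sketch is complete and correct in all essentials.

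However, the theorem is stated for every $\varepsilon>0$, and for $\varepsilon\le 1/2$ there is a genuine gap in your argument that is more basic than the interpolation difficulty you flag at the end: the objects entering your Galerkin orthogonality and your augmented norm are not defined. If $u_{|\Omega^i}\in H^{1+\varepsilon}(\Omega^i)$ with $\varepsilon\le 1/2$, then $\nabla u_i\in H^{\varepsilon}(\Omega^i)$ has no $L^2$ trace on the segments $\Gamma_T$, so the term $\{K\nabla u\cdot n_\Gamma\}$ in $a_\Gamma(u,v_h)$ and the weight $h_T\|\{K\nabla w\cdot n_\Gamma\}\|_{\Gamma_T}^2$ in $\|\cdot\|_{h,*}$ are meaningless; likewise $\jump{\partial_n E_iu_i}$ on interior edges is undefined, so you cannot argue that the ghost penalty ``vanishes'' at $u$ — one must instead define the consistency pairing without it. The standard repair uses the equation itself: since $f\in L^2(\Omega)$, the flux $k_i\nabla u_i$ lies in $H(\div,\Omega^i)$, so its normal trace exists in $H^{-1/2}$ and consistency can be formulated through that duality; alternatively, one proves the estimate for $u\in H^2$ and recovers the fractional range by interpolation of spaces/operators, or runs a medius-type analysis that avoids strong consistency altogether. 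With one of these devices inserted, the remainder of your outline — coercivity, boundedness, quasi-optimality, and Cl\'ement interpolation with scaled fractional trace estimates on cut cells — goes through and yields the stated $O(h^{\varepsilon})$ bound with constants independent of how $\Gamma$ crosses the mesh.
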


\subsection{Equivalent mixed formulation}
In the sequel, we aim to construct, for $i\in \{1,2\}$, discrete functions $\theta_{h,i}$ living on the interior edges of the subdomains $\Omega_h^i$. These functions, which will serve to correct the normal flux, are introduced, following \cite{Dana2016}, as the Lagrange multipliers of a hybrid mixed formulation whose primal solution coincides with $u_h$. In the CutFEM context where one deals with cut edges, it is important to note that the multipliers are defined on the whole edges, in order to avoid the use of sub-edges which could be small. We thus employ standard finite element spaces on the whole elements. 

In order to introduce an equivalent mixed formulation of (\ref{eq: Cutfem_Formulation}), we first define the following finite-dimensional spaces:  
\begin{equation*}
    \D=\D^1\times \D^2,\quad \M=\M^1\times \M^2,
\end{equation*}
where for $i=1,2$ we set:
\begin{equation*}
	\begin{split}
		\D^i=&\{v\in L^2(\T^i);\, {v}_{|T}\in P^1(T)\ \forall T\in \T^i\},\\
		\M^i=&\bigg\{\mu\in L^2(\F^i);\, {\mu}|_{F}\in P^1(F)\,\, \forall F\in \F ^i,\, 
		\displaystyle\sum_{F\in\mathcal{F}_N}\mathfrak{s}_N^Fh_F{\mu}_{|F}(N)=0 \ \ \,\forall N\in \overset{\circ}{\mathcal {N}_h^i}\bigg \}.
	\end{split}     
\end{equation*}
Here above, $\overset{\circ}{\mathcal {N}_h^i}$ denotes the set of nodes interior to $\Omega_h^i$, $\mathcal{F}_N$ the set of edges sharing the node $N$, and $\mathfrak{s}_N^F= \text{sign}(n_F,N)$ is equal to $1 (-1)$ if the orientation of $n_F$ with respect to the the node $N$ is in the clockwise (counter-clockwise) rotation.

We endow these spaces with the following norms:
\begin{equation*}
	\begin{split}
		\nrmD{v_h}^2&=\|v_h\|_h^2+ \sum_{i=1}^{2}\sum_{F\in \F^i}h_F^{-1}\int_F k_i\jump{v_{h,i}}^2\ds,\quad \forall v_h=(v_{h,1},v_{h,2})\in \D,\\
		\nrmf{\mu_h}^2 &=\sum_{i=1}^{2}\sum_{F\in \F^i}h_F\int_{F}k_i (\mu_{h,i})^2\ds,\quad \forall \mu_h=(\mu_{h,1},\mu_{h,2}) \in \M.      
	\end{split}
\end{equation*}

We next consider the mixed formulation: Find $(\tilde{u}_h,\theta_h)\in \D\times \M$, such that 
\begin{equation}\label{eq: Mixed formulation_cut}
	\begin{split}
		\tilde {a}_h(\tilde{u}_h,v_h)+&b_h(\theta_h,v_h)=l_h(v_h)\quad \forall v_h \in \mathcal{D}_h,\\
		&b_h(\mu_h,\tilde{u}_h)=0 \quad \qquad\forall \mu_h \in \mathcal{M}_h,
	\end{split}
\end{equation}
where 
\begin{equation*}
    \begin{split}
        \tilde{a}_h(\cdot,\cdot)= &a_h(\cdot,\cdot)- d_h(\cdot,\cdot),\quad 
        b_h(\mu_h,v_h)=\displaystyle{\sum_{i=1}^{2}} b_h^i(\mu_{h,i},v_{h,i}),\\
		d_h(\tilde u_h,v_h)=&\sum_{i=1}^{2} \sum_{F\in\mathcal{F}_h^{i}}  \int_{F\cap\Omega^i} \bigg(\langle k_i\nabla \tilde {u}_{h,i}\cdot n_F\rangle \jump{v_{h,i}}  +\langle k_i\nabla v_{h,i} \cdot n_F\rangle \jump{\tilde{u}_{h,i}}\bigg)\ds,\\
		b_h^i(\mu_{h,i},v_{h,i})=&\sum_{F\in\mathcal{F}_h^{i}} \frac{k_i h_F}{2}\sum_{N\in\mathcal{N}_F}{\mu_{h,i}}_{|F}(N)\jump{v_{h,i}}(N) 
	\end{split}
\end{equation*} 
where $\mathcal{N}_F$ stands for the set of nodes belonging to the edge $F$. Note that in the definition of $ b_h^i(\cdot,\cdot) $, the trapezium formula (i.e. the Gauss-Lobatto quadrature formula with $2$ points) is used to approximate $\displaystyle   \int_{F}k_i\mu_{h,i}\jump{v_{h,i}}\ds $. 

\begin{lemma}\label{lem: kernel_b_h_cut}
	The discrete kernel of $b_h(\cdot,\cdot)$ coincides with the space $\C$, i.e.,
	\begin{equation*}
		\mathrm{Ker}\,b_h=\left\{ v_h\in\mathcal{D}_h;\, b_h(\mu_h,v_h)=0,\, \forall \mu_h \in \mathcal{M}_h \right\}=\mathcal{C}_h.
	\end{equation*}    
\end{lemma}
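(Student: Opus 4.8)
The plan is to reduce the statement to a single subdomain and then run a nodewise argument on the quadrature form $b_h^i$. Because $\D=\D^1\times\D^2$, $\M=\M^1\times\M^2$ and $b_h(\mu_h,v_h)=\sum_{i=1}^2 b_h^i(\mu_{h,i},v_{h,i})$, testing with multipliers having a single nonzero component shows that $v_h\in\Ker b_h$ if and only if $b_h^i(\mu_{h,i},v_{h,i})=0$ for every $\mu_{h,i}\in\M^i$ and each $i$; hence it suffices to prove $\Ker b_h^i=\C^i$. The inclusion $\C^i\subseteq\Ker b_h^i$ is the easy one: a function $v_{h,i}\in\C^i$ is continuous on $\Omega_h^i$ and vanishes on the Dirichlet boundary, so $\jump{v_{h,i}}|_F(N)=0$ at every node $N$ of every $F\in\F^i$ (interior-edge jumps vanish by continuity, Dirichlet-edge values because $\jump{v}=v$ there, while the remaining boundary and interface edges do not belong to $\F^i$). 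Every summand of $b_h^i$ therefore vanishes.

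For the reverse inclusion I would first exchange the order of summation to group the form by nodes,
\begin{equation*}
b_h^i(\mu_{h,i},v_{h,i})=\frac{k_i}{2}\sum_{N\in\mathcal{N}_h^i}\ \sum_{F\in\mathcal{F}_N}h_F\,\mu_{h,i}|_F(N)\,\jump{v_{h,i}}|_F(N),
\end{equation*}
so that the unknowns to be controlled are the nodal jump values $\jump{v_{h,i}}|_F(N)$. For a fixed node $N_0$ one can pick $\mu_{h,i}\in\M^i$ that is localized at $N_0$, in the sense that $\mu_{h,i}|_F(N)=0$ for all pairs $(F,N)$ with $N\neq N_0$; this respects the constraints of $\M^i$ at every node except possibly $N_0$. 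For such a multiplier the form collapses to the single inner sum over $\mathcal{F}_{N_0}$, and imposing $b_h^i=0$ for all admissible nodal data gives two cases. If $N_0$ is a boundary node the quantities $h_F\mu_{h,i}|_F(N_0)$ are free, forcing $\jump{v_{h,i}}|_F(N_0)=0$ for all $F\in\mathcal{F}_{N_0}$. If $N_0\in\overset{\circ}{\mathcal{N}_h^i}$ these quantities range over the hyperplane $\{\,b:\ \sum_{F}\mathfrak{s}_{N_0}^F b_F=0\,\}$, so the vector $(\jump{v_{h,i}}|_F(N_0))_F$ must be parallel to its normal, i.e. $\jump{v_{h,i}}|_F(N_0)=\lambda_{N_0}\,\mathfrak{s}_{N_0}^F$ for some scalar $\lambda_{N_0}$.

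To remove the residual parameter at interior nodes I would use the elementary telescoping identity
\begin{equation*}
\sum_{F\in\mathcal{F}_{N}}\mathfrak{s}_{N}^F\,\jump{v_{h,i}}|_F(N)=0,\qquad N\in\overset{\circ}{\mathcal{N}_h^i},
\end{equation*}
valid for every $v_{h,i}\in\D^i$: travelling once around the fan of triangles sharing $N$, the sign $\mathfrak{s}_N^F$ turns each jump into a consistently oriented increment of the nodal value between successive triangles, and these increments sum to zero around the closed loop. Substituting $\jump{v_{h,i}}|_F(N)=\lambda_N\mathfrak{s}_N^F$ and using $(\mathfrak{s}_N^F)^2=1$ yields $\lambda_N\,|\mathcal{F}_N|=0$, hence $\lambda_N=0$ and $\jump{v_{h,i}}|_F(N)=0$ at interior nodes as well. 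All nodal jump values thus vanish; since $\jump{v_{h,i}}|_F$ is affine on each edge and zero at both endpoints, $\jump{v_{h,i}}\equiv 0$ on every $F\in\F^i$. This means $v_{h,i}$ is continuous across all interior edges of $\Omega_h^i$, hence $v_{h,i}\in H^1(\Omega_h^i)$, and it vanishes on $\partial\Omega^i\setminus\Gamma$, so $v_{h,i}\in\C^i$.

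The crux of the argument is the interior-node step, and the single hardest ingredient is the telescoping identity together with the correct reading of the sign $\mathfrak{s}_N^F$: it is precisely this orientation convention, shared by the definition of $\M^i$ and by the jump identity, that makes the constraint normal and the jump vector align and forces $\lambda_N=0$. Two further points require care but are routine once the mechanism is clear: verifying that multipliers can genuinely be localized at one node without breaking the nodal constraints of $\M^i$, and checking the purely geometric fact that, under the standing assumption on the interface ($\Gamma_T$ a single segment in each cut element), $\F^i$ contains exactly the edges across which membership in $H^1(\Omega_h^i)$ must be enforced, so that the nodewise vanishing of the jumps is equivalent to $v_{h,i}\in\C^i$.
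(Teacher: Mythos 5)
Your proof is correct, and it takes a genuinely different route from the paper's for the hard inclusion $\Ker b_h\subseteq\C$. The paper uses a single global test multiplier: given $v_h\in\Ker b_h$, it sets $(\mu_{h,i})_{|F}=h_F^{-1}\jump{v_{h,i}}$ on every $F\in\F^i$, verifies admissibility in $\M^i$ — the constraint $\sum_{F\in\mathcal F_N}\mathfrak{s}_N^F\jump{v_{h,i}}(N)=0$ at interior nodes is precisely your telescoping identity — and then $b_h(\mu_h,v_h)=0$ collapses to a sum of squares of all nodal jump values, killing them simultaneously. You instead localize the multiplier node by node and argue by duality: at an unconstrained (boundary) node the nodal data are free and the jumps vanish outright, while at an interior node the jump vector must lie in the orthogonal complement of the constraint hyperplane, hence be proportional to the sign vector $(\mathfrak{s}_N^F)_F$, and the same telescoping identity then forces the proportionality constant $\lambda_N$ to vanish. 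Both proofs thus rest on the same two ingredients — the nodal decoupling produced by the trapezoidal quadrature in $b_h^i$, and the signed-jump telescoping around interior nodes — but deploy the telescoping differently: the paper uses it to certify that its test function is admissible, you use it to exclude the one residual mode. The paper's version is shorter; yours is more informative, since it effectively computes the nodewise orthogonal complement of the constraint in $\M^i$ and explains why that constraint is exactly the right one (no spurious kernel), which is the same mechanism that makes the local systems \eqref{eq:def_v} and \eqref{theta1_6}--\eqref{theta2_6} solvable later in the paper. Your two flagged caveats — legitimacy of one-node localization, and the identification of $\F^i$ with exactly the edges across which $H^1(\Omega_h^i)$-conformity and the Dirichlet condition (via $\jump{v}=v$ on boundary edges) must be enforced — are treated at the same level of implicitness in the paper's own proof, so they are shared conventions rather than gaps in your argument.
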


\begin{proof}
Let first $v_h\in \C$; the continuity of $v_h$ across any interior edge gives $b_h(\mu_h,v_h)=0$ for any $\mu_h \in \mathcal{M}_h$, which translates into $v_h\in \Ker b_h$. Next, we prove the remaining inclusion, $ \Ker b_h\subset \C$.  
For $v_h=(v_{h,1},v_{h,2})\in \Ker b_h$, we consider
$$(\mu_{h,i})_{|F}=h_F^{-1}\jump{v_{h,i}},\quad \forall F\in\F^i \quad (i=1,2).$$
Clearly, $\mu_{h,i} $ belongs to $\M^i$ since 
$$ \displaystyle\sum_{F\in\mathcal{F}_N}\mathfrak{s}_N^Fh_F(\mu_{h,i})_{|F}(N)=\displaystyle\sum_{F\in\mathcal{F}_N}\mathfrak{s}_N^F\jump{v_{h,i}}(N)=0, \quad \forall N\in \overset{\circ}{\mathcal {N}_h^i}.$$
From $b_h(\mu_h,v_h)=0$ we get $\jump{v_{h,i}}=0$ for any $F\in \F^i$ ($i=1,2$), which yields $v_h\in \C$.
The double inclusion yields the announced result.  
\end{proof}

We next establish the inf-sup condition for the form $b_h(\cdot,\cdot)$, which holds uniformly with respect to the discretization, the interface, and the diffusion coefficients. 
  
\begin{theorem}\label{thrm: inf-sup_cond_cut}
There exists a constant $\beta>0$ independent of the mesh size, $\Gamma$ and $K$ such that
	\begin{equation*}
		\inf_{\mu_h\in \mathcal{M}_h}\sup_{v_h\in \mathcal{D}_h} \frac{b_h(\mu_h,v_h)}{\nrmf{\mu_h}\nrmD{v_h}}\geq \beta.
	\end{equation*}
\end{theorem}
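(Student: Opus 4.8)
The plan is to prove the inf-sup condition by an explicit construction: given any $\mu_h = (\mu_{h,1}, \mu_{h,2}) \in \M$, I exhibit a candidate $v_h \in \D$ for which the supremand $b_h(\mu_h, v_h) / (\nrmf{\mu_h}\nrmD{v_h})$ is bounded below by a constant $\beta$ independent of $h$, $\Gamma$ and $K$. Since the two subdomains decouple in the definitions of $b_h$, $\nrmf{\cdot}$ and the jump-part of $\nrmD{\cdot}$, I would first try to construct the test function separately on each $\Omega_h^i$ and then recombine. The natural choice, mimicking \cite{Dana2016,Aimene}, is to pick $v_{h,i}$ so that its jump across each edge $F\in\F^i$ reproduces $\mu_{h,i}$, i.e. to engineer $\jump{v_{h,i}}(N)$ proportional to $(\mu_{h,i})_{|F}(N)$ at the edge nodes, so that the trapezoidal sum defining $b_h^i$ becomes $\sum_F \tfrac{k_i h_F}{2}\sum_N (\mu_{h,i})_{|F}^2(N)$, which is comparable to $\nrmf{\mu_h}^2$ by the equivalence of the discrete $\ell^2$ node-norm and the $L^2(F)$-norm on the finite-dimensional space $P^1(F)$.

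Concretely, I would first verify the \emph{surjectivity} of the jump operator onto $\M^i$: the constraint $\sum_{F\in\mathcal F_N}\mathfrak s_N^F h_F \mu_{|F}(N)=0$ at each interior node $N$ is exactly the compatibility condition that must be satisfied for a discontinuous $P^1$ field on $\T^i$ to have prescribed nodal jumps; this is the reason the multiplier space $\M^i$ was defined with that summation constraint in the first place. So I would build $v_{h,i}$ element by element (or node by node), assigning the nodal values on each triangle so that the jumps across edges match the target, and check that the interior-node constraint guarantees consistency of these assignments. Once such a $v_{h,i}$ exists with $\jump{v_{h,i}} = (\mu_{h,i})_{|F}$ (up to an $h_F$ scaling), one gets $b_h(\mu_h, v_h) \gtrsim \nrmf{\mu_h}^2$.

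The remaining task is the \emph{upper bound} $\nrmD{v_h} \lesssim \nrmf{\mu_h}$, which is where the robustness in $K$ and in the mesh/interface geometry must be tracked carefully. I would estimate each piece of $\nrmD{v_h}^2 = \|v_h\|_h^2 + \sum_i\sum_{F\in\F^i} h_F^{-1}\int_F k_i\jump{v_{h,i}}^2\ds$. The jump term is immediate from the construction, giving exactly $\sum_i\sum_F h_F^{-1} k_i \|\jump{v_{h,i}}\|_F^2 \lesssim \nrmf{\mu_h}^2$. For the $\|v_h\|_h^2$ part I would control the gradient energy $\|k_i^{1/2}\nabla v_{h,i}\|_{\Omega^i}^2$ and, via Lemma~\ref{lem: bound_norms_cut}, the full $\|k_i^{1/2}\nabla v_{h,i}\|_{\Omega_h^i}^2$ together with the ghost-penalty term $j_i$; by inverse inequalities on the reference element these gradient contributions are bounded by $h_F^{-1} k_i \|\jump{v_{h,i}}\|_F^2$-type quantities, hence again by $\nrmf{\mu_h}^2$. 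The interface contribution $\sum_{T\in\T^\Gamma}\int_{\Gamma_T}\tfrac{k_\Gamma}{h_T}[v_h]^2\ds$ couples the two subdomains through $k_\Gamma$; here I would use the trace inequality \eqref{eq: trace_in_cut} to pass from $\Gamma_T$ to the cut cell $T$ and exploit that $k_\Gamma \le \min(k_1,k_2)$ to keep the constant independent of the contrast.

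The main obstacle I expect is precisely the robustness of the upper bound with respect to the mesh/interface configuration: because the multipliers are deliberately defined on whole edges (to avoid vanishingly small sub-edges), the test function $v_{h,i}$ lives on the full cells $\Omega_h^i$, and one must ensure that no constant degenerates when a cut cell $T^i = T\cap\Omega^i$ becomes arbitrarily small relative to $T$. The ghost-penalty term and Lemma~\ref{lem: bound_norms_cut} are exactly the tools designed to recover control of $\|v_{h,i}\|$ over the full mesh domain $\Omega_h^i$ from its restriction to $\Omega^i$, so the delicate point is to invoke them correctly so that all local scalings reduce to the reference element and the final constant $\beta$ is geometry- and contrast-independent.
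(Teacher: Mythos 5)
Your overall architecture coincides with the paper's: a Fortin argument in which the test function is built nodewise so that $\jump{v_{h,i}}_{|F}(N)=h_F(\mu_{h,i})_{|F}(N)$, with solvability of the local systems guaranteed exactly by the node constraint built into $\M^i$, the lower bound $b_h(\mu_h,v_h)\gtrsim \nrmf{\mu_h}^2$ obtained from the trapezoidal quadrature, and the interface term handled via the trace inequality \eqref{eq: trace_in_cut} together with $k_\Gamma\le \min(k_1,k_2)$. However, there is a genuine gap in your upper bound $\nrmD{v_h}\lesssim\nrmf{\mu_h}$. The jump conditions \eqref{eq:def_v} determine $v_{h,i}$ only up to the kernel of the jump operator, which contains all \emph{continuous} piecewise-linear fields: adding such a field changes neither $b_h(\mu_h,v_h)$ nor any jump term, yet can make $\|k_i^{1/2}\nabla v_{h,i}\|_{\Omega^i}$ arbitrarily large. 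Consequently, your claim that ``by inverse inequalities on the reference element these gradient contributions are bounded by $h_F^{-1}k_i\|\jump{v_{h,i}}\|_F^2$-type quantities'' is false for a generic solution of the jump-matching problem. What is needed, and what the paper imports from \cite{Aimene}, is a \emph{stability} statement for one specific solution: the patchwise functions $v_{N,i}$, vanishing at all vertices other than $N$, can be chosen so that $\sum_{T\in\mathcal T_h^i}h_T^{-2}\|k_i^{1/2}v_{h,i}\|_T^2\lesssim\nrmf{\mu_{h,i}}^2$ (estimate \eqref{eq:estim_v_L2}). Only with this $L^2$ control in hand do inverse inequalities yield the gradient bound on $\Omega_h^i$, from which the ghost-penalty bound and (via \eqref{eq: trace_in_cut} applied to $v_{h,i}$ itself) the interface bound follow. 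Your outline never selects, let alone estimates, a particular solution, so the step would fail as written.

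A secondary misdirection: Lemma \ref{lem: bound_norms_cut} is invoked backwards. That lemma bounds $\|k_i^{1/2}\nabla v_{h,i}\|_{\Omega_h^i}^2$ \emph{from above} by the physical-domain gradient plus the ghost penalty; it is the tool for the coercivity of $a_h$, not for the Fortin bound, and it cannot be used to control $j_i(v_{h,i},v_{h,i})$. The correct chain, as in the paper, runs in the opposite direction: the $L^2$ stability \eqref{eq:estim_v_L2} gives $\|k_i^{1/2}\nabla v_{h,i}\|_{\Omega_h^i}^2\lesssim\nrmf{\mu_h}^2$ by an inverse inequality (using $\Omega^i\subset\Omega_h^i$), and then $j_i(v_{h,i},v_{h,i})\lesssim\|k_i^{1/2}\nabla v_{h,i}\|_{\Omega_h^i}^2$ by discrete trace inequalities on the edges of $\mathcal F_g^i$. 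With \eqref{eq:estim_v_L2} added (proved or cited) and the roles of the lemmas corrected, your argument becomes the paper's proof.
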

\begin{proof}
We follow the idea of \cite{Aimene} for the case of a diffusion problem where the discontinuities of the coefficients are aligned with the mesh, and adapt it here to the CutFEM method. The diffusion coefficients are supposed here to be constant on each subdomain; hence, the analysis provided in \cite{Dana2016} for the Laplace operator also applies. 

We use Fortin's trick \cite{brezzi2012mixed}: to any $\mu_h\in \M$, we associate a function $v_h\in \D$ s.t. 
\begin{equation}\label{eq: inf-sup bound_idea}
 \begin{split}
 b_h(\mu_h,v_h)\gtrsim \nrmL[\M]{\mu_h}^2,
 \quad \nrmL[\D]{v_h}\lesssim \nrmL[\M]{\mu_h}.
 \end{split}
\end{equation}

Let $\mu_h=(\mu_{h,1},\mu_{h,2})\in \M$. The construction of $v_h=(v_{h,1},v_{h,2})$ is done patch-wise. We fix $i\in\{1,2\}$, consider an arbitrary node $N\in\N^i$ 
 and denote by $\omega_N^i$ the patch consisting of the triangles of $\T^i$ that share node $N$. We define $v_{N,i}$ locally on $\omega_N^i$, piecewise linear and discontinuous, by imposing on any edge $F\in \F^i\cap \mathcal F_N$ that
\begin{equation}\label{eq:def_v}
\jump{v_{N,i}}_{|F}(N)=h_F(\mu_{h,i})_{|F}(N).
\end{equation}
We also impose $v_{N,i} (M)=0$ at all other vertices $M$ of the patch.
The linear system (\ref{eq:def_v}) is compatible for any $N \in \overset{\circ}{\mathcal {N}_h^i} $ due to the constraint imposed in the space $\mathcal M_h^i$. 
 It is also easy to check that the system is compatible when 
$N \in \N^i \setminus \overset{\circ}{\mathcal {N}_h^i}$.

Then we define $v_{h,i}\in \D^i$ as $\displaystyle v_{h,i}=\sum_{N\in \N^i} v_{N,i}$, which yields $\jump{v_{h,i}}_{|F}=h_F(\mu_{h,i})_{|F}$ for any $ F\in \F^i$. Hence, we further get that 
\begin{equation}\label{eq: inf_sub_lerror_1}    
b_h(\mu_h,v_h)= \sum_{i=1}^2\sum_{F\in\mathcal{F}_h^{i}} \frac{k_i h_F^2}{2}\sum_{N\in\mathcal{N}_F}(\mu_{h,i})_{|F}^2(N)
\gtrsim \nrmL[\M]{\mu_h}^2
\end{equation}
as well as: 
\begin{equation}\label{eq: inf_sub_lerror_2}
 \sum_{i=1}^{2}\sum_{F\in \F^i}\int_F k_i h_F^{-1}\jump{v_{h,i}}^2\ds =\sum_{i=1}^{2}\sum_{F\in \F^i}\int_F k_i h_F(\mu_{h,i})^2\ds= \nrmL[\M]{\mu_h}^2.
\end{equation}

It was shown in \cite{Aimene} that (\ref{eq:def_v}) admits a solution such that
\begin{equation}\label{eq:estim_v_L2}
    \sum_{T\in\mathcal T_h^i}h_T^{-2}\nrmL[T]{k_i^{1/2} v_{h,i}}^2 \lesssim  \nrmL[\M^i]{\mu_{h,i}}^2 \quad (i=1,2).
\end{equation}
By means of an inverse inequality and using that $\Omega^i\subset \Omega^i_h $, the previous bound implies
\begin{equation}\label{eq: inf_sub_lerror_3}
\sum_{i=1}^2 \nrmL[\Omega^i]{k_i^{1/2}\nabla v_{h,i}}^2\le \sum_{i=1}^2 \nrmL[\Omega^i_h]{k_i^{1/2}\nabla v_{h,i}}^2 \lesssim  \nrmL[\M]{\mu_h}^2.
\end{equation}

In what follows, we bound the remaining terms in the norm $\nrmL[\D]{v_h}$, which are specific to the CutFEM formulation. We clearly have:
\begin{equation*}
    j_i(v_{h,i},v_{h,i}) \le \sum_{F\in\Fg^i} k_i h_F (\nrmL[F]{\nabla v_{h,i}^+}^2+\nrmL[F]{\nabla v_{h,i}^-}^2)
 \lesssim \nrmL[\Omega_h^i]{ k_i^{1/2}\nabla v_{h,i}}^2\quad (i=1,2),
\end{equation*}
so the ghost penalty contribution is uniformly bounded:
\begin{equation}\label{eq: inf_sub_lerror_4}
    \sum_{i=1}^{2}j_i(v_{h,i},v_{h,i}) \lesssim  \nrmL[\M]{\mu_h}^2.
\end{equation}
Thanks to the trace inequality \eqref{eq: trace_in_cut}, combined with norm equivalence in finite dimensional spaces and with $k_\Gamma \leq k_i$ for $i=1,2$, one has  for any $T\in \T^\Gamma$ that:
\begin{equation*}
    \int_{\Gamma_T}\frac{k_{\Gamma}}{h_T}[v_h]^2 \ds \lesssim  \frac{k_{\Gamma}}{h_T^2}(\nrmL[T]{v_{h,1}}^2+\nrmL[T]{v_{h,2}}^2)\lesssim \frac{1}{h_T^2}(\nrmL[T]{k_1^{1/2}v_{h,1}}^2+\nrmL[T]{k_2^{1/2}v_{h,2}}^2).
\end{equation*}
Using again  \eqref{eq:estim_v_L2}, we end up with 
\begin{equation}\label{eq: inf_sub_lerror_5}
\begin{split}
    \sum_{T\in\T^{\Gamma}}\int_{\Gamma_T}\frac{k_{\Gamma}}{h_T}[v_h]^2 \ds \lesssim  \nrmL[\M]{\mu_h}^2.
\end{split}
\end{equation}

Thanks to \eqref{eq: inf_sub_lerror_1}, \eqref{eq: inf_sub_lerror_2}, \eqref{eq: inf_sub_lerror_3}, \eqref{eq: inf_sub_lerror_4} and \eqref{eq: inf_sub_lerror_5}, we finally obtain \eqref{eq: inf-sup bound_idea}. 
 \end{proof}

Lemma \ref{lem: kernel_b_h_cut} implies that for any $v_h\in \Ker b_h$,
\begin{equation*}
    \tilde{a}_h(v_h,v_h)= a_h(v_h,v_h)\gtrsim \nrmL[h]{v_h}^2= \nrmL[\D]{v_h}^2. 
\end{equation*}
Hence, we have the uniform coercivity of $\tilde{a}_h(\cdot,\cdot)$ on $\mathrm{Ker}\,b_h$. Furthermore, the second variational equation of the mixed formulation \eqref{eq: Mixed formulation_cut} yields that $\tilde{u}_h$ belongs to $\Ker b_h$, and the well-posedness of the primal formulation \eqref{eq: Cutfem_Formulation} ensures that $\tilde{u}_h=u_h$. Finally, the Babuska-Brezzi theorem yields the well-posedness of \eqref{eq: Mixed formulation_cut}, thanks to Theorem \ref{thrm: inf-sup_cond_cut} which ensures the existence and uniqueness of the multiplier $\theta_h$.

\subsection{Local computation of multiplier}
A crucial feature of the mixed method is that each multiplier  $\theta_{h,i}$  can be computed locally, as sum of local contributions $\theta_N^{i} $ defined on the patches $\omega_N^i=\omega_N\cap \Omega_h^i$ associated to the nodes: 
\begin{equation}\label{eq:theta_sum}
 \theta_{h,i}=\displaystyle\sum_{N\in \N^i}\theta_N^{i},\qquad  i=1,2.
\end{equation}

Next, we present the local computation on each subdomain $\Omega_h^i$ ($i=1,2$). 

For this purpose, let the residual $r_h(\cdot):=l_h(\cdot) - \tilde{a}_h(u_h,\cdot)$. For convenience of notation, we also introduce for any  $v_{h,i}\in \mathcal D_h ^i$:
\[
 r_h^{1}(v_{h,1}) = r_h((v_{h,1},0)), \quad  r_h^{2}(v_{h,2}) = r_h((0, v_{h,2})).
\]
From \eqref{eq: Mixed formulation_cut}, we immediately have that
\begin{equation}\label{eq:local_multiplier}
        b_h^{i}(\theta_{h,i},v_{h,i}) = r_{h}^{i}(v_{h,i}) \quad \forall v_{h,i} \in \mathcal{D}_{h}^{i}\quad  (i=1,2),
\end{equation}
whereas from Lemma \ref{lem: kernel_b_h_cut}, we obtain that $r_h^i(v_{h,i}) =0$ for any $v_{h,i}\in\mathcal{C}_h^i$ and $i=1,2$.

Let now $i\in\{1,2\}$ and $N\in \N^i$. We define $\theta_N^i\in \mathcal{M}_h^i$ living on $\mathcal{F}_N^i:=\mathcal{F}_N\cap \mathcal{F}_h^i$ such that, for any triangle $T\in \mathcal{\omega}_N^i$:
 \begin{eqnarray}
      b_h^{i}(\theta_N^i,\varphi_N\chi_T)&=&r_h^{i}(\varphi_N\chi_T),\label{theta1_6}\\
      b_h^{i}(\theta_N^i,\varphi_M\chi_T)&=&0, \qquad\qquad\quad \forall M\in \mathcal{N}_T\setminus \{N\}\label{theta2_6}
 \end{eqnarray}
where $\varphi_N,\varphi_M$ are the $P^1$-nodal basis functions associated to the nodes $N$ and $M$, respectively, $\chi_T$ is the characteristic function on $T$, and $\mathcal{N}_{T}$ is the set of nodes in $T$. Note that (\ref{theta2_6}) implies that $\theta_N^i(M)=0$ for all $M\in \mathcal{N}_T\setminus \{N\}$ and $T\in \mathcal{\omega}_N^i$. 

Using a similar technique as in \cite{Dana2016}, one can show that $\theta_N^i$ is well-defined, thanks to the constraint imposed in the space $\mathcal{M}_{h}^i$, and that $\tilde \theta_{h,i}:=\displaystyle\sum_{N\in \N^i}\theta_N^{i}$ satisfies the weak equation \eqref{eq:local_multiplier}. Hence, $\tilde \theta_{h}=(\tilde \theta_{h,1},\tilde \theta_{h,2})$ belongs to $\mathcal{M}_{h}$  and satisfies the mixed  problem (\ref{eq: Mixed formulation_cut}), so by uniqueness of its solution we get  $\tilde \theta_{h}= \theta_{h}$ and \eqref{eq:theta_sum} is checked.

Moreover, similarly to \cite{Aimene} we get that:
\begin{equation}\label{eQ:bound_theta_i}
 \bigg(\sum_{F\in \mathcal F_N\cap \mathcal F_h^i} h_F k_i^{2}\|\theta_N^i\|_F^2\bigg)^{1/2} \lesssim\displaystyle\sum_{T\in\omega_N^i}|r_h^i(\varphi_N\chi_T)|.
\end{equation}
In the sequel, we bound $\theta_N^i$ in terms of the solution $u_h$ and the data; this estimate is useful in the a posteriori error analysis. 

\begin{theorem}\label{thm: bound theta_i_Pi_T^1}
For $i\in \{1,2\}$ and $N\in\N^i$, we have that:
\begin{equation*}
  \begin{split}
  \bigg(\sum_{F\in \mathcal F_N \cap \F^i} h_F k_i^{2}\|\theta_N^i\|_F^2\bigg)&^{1/2}\lesssim \sum_{T\in\omega_N^i} \sqrt{h_{T}}k_i\bigg(\| \jump{\partial_n u_h^i}\|_{\partial T\setminus \partial \omega_N^i} +\| \jump{\partial_n u_h^i}\|_{\partial T\cap \Fg^i}\bigg) \\
        &+\sum_{T\in\T^\Gamma\cap \omega_N^i}\bigg(\frac{{k_\Gamma}}{\sqrt{ h_T}}\|[u_h]\|_{\Gamma_T}+\sqrt{h_{T}}\|g-[K\nabla u_h\cdot n_\Gamma]\|_{\Gamma_T}\bigg)\\ &+\sum_{T\in\omega_N^i} h_T\|f\|_{T^i}.
   \end{split} 
   \end{equation*}
\end{theorem}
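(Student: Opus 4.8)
The starting point is the already-established bound \eqref{eQ:bound_theta_i}, which reduces the theorem to estimating each local residual $r_h^i(\varphi_N\chi_T)$ for $T\in\omega_N^i$ and then summing over the patch. Fix such a $T$, set $w=\varphi_N\chi_T$ (supported on the single triangle $T$), and take $i=1$ for concreteness, the case $i=2$ being symmetric. Writing $r_h^1(\cdot)=l_h(\cdot)-a_h(u_h,\cdot)+d_h(u_h,\cdot)$ and using that $k_1$ is constant and $u_{h,1}$ is affine on $T$, I would integrate $a_1(u_{h,1},w)$ by parts over $T^1=T\cap\Omega^1$: the bulk term $\div(k_1\nabla u_{h,1})$ vanishes, leaving only boundary contributions on $\partial T^1$, which consists of the edges $F\subset\partial T$ restricted to $\Omega^1$ together with the interface piece $\Gamma_T$ when $T\in\T^\Gamma$. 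A useful simplification is that $u_{h,1}\in\C^1\subset H^1(\Omega_h^1)$ is continuous across the interior edges of $\Omega_h^1$, so $\jump{u_{h,1}}=0$ there and the symmetrization half of $d_h^1$, namely $\langle k_1\nabla w\cdot n_F\rangle\jump{u_{h,1}}$, drops out entirely. Thus $r_h^1(w)$ decomposes into a volume term $\int_{T^1}f\varphi_N\dx$, interior-edge terms, a ghost-penalty term $-\gamma_g j_1(u_{h,1},w)$, and (for cut $T$) interface terms.

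The volume term obeys $|\int_{T^1}f\varphi_N\dx|\lesssim h_T\|f\|_{T^1}$ by Cauchy--Schwarz and $\|\varphi_N\|_{T^1}\lesssim h_T$. For an interior edge $F\subset\partial T$, I would combine the integration-by-parts contribution $-\int_{F\cap\Omega^1}k_1\nabla u_{h,1}\cdot n_{\rm out}\varphi_N\ds$ with the surviving part of $d_h^1$, $\int_{F\cap\Omega^1}\langle k_1\nabla u_{h,1}\cdot n_F\rangle\jump{w}\ds$; since $\jump{w}=\pm\varphi_N$ on $F$ and $n_{\rm out}=\pm n_F$, these telescope to
\[
\mp\tfrac12\int_{F\cap\Omega^1}k_1\jump{\partial_n u_{h,1}}\varphi_N\ds,
\]
so the average in $d_h$ is exactly what turns the one-sided flux into the jump. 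Since $\varphi_N$ vanishes on every edge not containing $N$, and the edge of $T$ opposite $N$ lies on $\partial\omega_N^1$, only the two edges through $N$ survive, which are precisely those in $\partial T\setminus\partial\omega_N^1$; each is bounded by $\sqrt{h_T}\,k_1\|\jump{\partial_n u_{h,1}}\|_F$ via $\|\varphi_N\|_F\lesssim\sqrt{h_T}$. For the ghost term $-\gamma_g\sum_{F\in\Fg^1,\,F\subset\partial T}h_F\int_F k_1\jump{\partial_n u_{h,1}}\jump{\partial_n w}\ds$, bounding $|\jump{\partial_n w}|\lesssim|\nabla\varphi_N|\lesssim h_T^{-1}$ with $h_F\lesssim h_T$ and $\|1\|_F\lesssim\sqrt{h_T}$ gives the $\sqrt{h_T}\,k_1\|\jump{\partial_n u_h^1}\|_{\partial T\cap\Fg^1}$ contribution.

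The interface terms are the crux. Gathering on $\Gamma_T$ the piece $\int_{\Gamma_T}g\{w\}^*\ds=\omega_2\int_{\Gamma_T}g\varphi_N\ds$ from $l_h$, the integration-by-parts boundary term $-\int_{\Gamma_T}k_1\nabla u_{h,1}\cdot n_\Gamma\varphi_N\ds$, and the three $a_\Gamma$ contributions evaluated at $w$ (using $[w]=\varphi_N$ and $\{K\nabla w\cdot n_\Gamma\}=\omega_1 k_1\nabla\varphi_N\cdot n_\Gamma$), I would exploit the weight identities $\omega_1+\omega_2=1$ and $\omega_1 k_1=\omega_2 k_2=k_\Gamma$. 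The flux-average and data parts then collapse to $\omega_2\int_{\Gamma_T}(g-[K\nabla u_h\cdot n_\Gamma])\varphi_N\ds$, which is $\lesssim\sqrt{h_T}\|g-[K\nabla u_h\cdot n_\Gamma]\|_{\Gamma_T}$; the Nitsche penalty part $-\tfrac{\gamma k_\Gamma}{h_T}\int_{\Gamma_T}[u_h]\varphi_N\ds$ and the symmetrization part $k_\Gamma\int_{\Gamma_T}\nabla\varphi_N\cdot n_\Gamma\,[u_h]\ds$ each scale as $\tfrac{k_\Gamma}{\sqrt{h_T}}\|[u_h]\|_{\Gamma_T}$, using $\|\varphi_N\|_{\Gamma_T}\lesssim\sqrt{h_T}$ and $|\nabla\varphi_N|\lesssim h_T^{-1}$, $\|1\|_{\Gamma_T}\lesssim\sqrt{h_T}$. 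Collecting all four groups and summing over $T\in\omega_N^1$ yields the stated bound.

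The main obstacle is the algebraic bookkeeping rather than any deep estimate: one must combine the element-wise integration-by-parts boundary terms with the $d_h$ corrector to recover exactly the normal-flux jumps with the correct $\tfrac12$ weighting, verify that $\varphi_N$ kills the outer-patch edges so that only $\partial T\setminus\partial\omega_N^i$ remains, and carry out the cut-cell cancellations so that the robust harmonic factor $k_\Gamma$ — not $k_1$ or $k_2$ — appears in front of both interface terms. Robustness in the diffusion coefficients rests entirely on the identities $\omega_1 k_1=\omega_2 k_2=k_\Gamma$ and $\omega_1+\omega_2=1$.
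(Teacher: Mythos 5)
Your proposal is correct and follows essentially the same route as the paper: both reduce the theorem via the bound \eqref{eQ:bound_theta_i} to estimating $r_h^i(\varphi_N\chi_T)$, integrate the stiffness term by parts on $T^i$ (the divergence vanishing since $u_{h,i}$ is affine), combine the boundary contributions with the $d_h$ averages to produce the $\tfrac12$-weighted normal-flux jumps and with the $a_\Gamma$ and $g$ terms to produce $\omega_2(g-[K\nabla u_h\cdot n_\Gamma])$ and the $k_\Gamma$-weighted $[u_h]$ terms via $\omega_1 k_1=k_\Gamma$, and conclude with the scaled nodal-basis estimates $\|\varphi_N\|_{T}\lesssim h_T$, $\|\varphi_N\|_{\partial T},\|\varphi_N\|_{\Gamma_T}\lesssim\sqrt{h_T}$, $\|\nabla\varphi_N\|_{\partial T}\lesssim h_T^{-1/2}$ and $\varphi_N=0$ on $\partial\omega_N^i$. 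Your explicit telescoping of the one-sided flux with the $d_h$ corrector and the observation that $\jump{u_{h,i}}=0$ kills the symmetrization half are exactly the (partly implicit) steps of the paper's computation, so there is no substantive difference to report.
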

\begin{proof}
Thanks to inequality \eqref{eQ:bound_theta_i}, we only need to bound the residual $r_h^i(\varphi_N\chi_T)$. 
 In the following, without loss of generality, we prove the bound for $i=1$. For any $T\in \omega_N^1$, we have that:
\begin{equation*}
    \begin{split}
        r_h^1(\varphi_N\chi_T) 
        &=\int_{T^1}f\varphi_N\dx
        + \int_{\Gamma_T}g\omega_2\varphi_N\ds
        - \int_{T^1}k_1\nabla u_{h,1} \cdot\nabla \varphi_N\dx\\
        &-\gamma h_T^{-1} \int_{\Gamma_T} k_{\Gamma}[u_h] \varphi_N\ds
        + \int_{\Gamma_T} \left( \{K\nabla u_h\cdot n_{\Gamma}\}\varphi_N  
        +\omega_1k_1\nabla\varphi_N\cdot n_{\Gamma} [u_h] \right)\ds\\
        & + \sum_{F \in \mathcal{F}_{T}\cap\F^1}\int_{F \cap \Omega^{1}}\langle k_1\nabla u_{h,1}\cdot n_{F}\rangle \jump{\varphi_N\chi_T}\ds\\
       &-\sum_{F\in \mathcal{F}_{T}\cap\mathcal F_g^1}\gamma_g h_F
        \int_{F}\jump{k_1\nabla u_{h,1}\cdot n_{F}} \jump{\nabla (\varphi_N\chi_T) \cdot n_{F} }\ds.
    \end{split}
\end{equation*}
Here above, we have used that 
 $[\varphi_{N} \chi_{T}]_{|\Gamma_{T}} = 
(\varphi_{N} )_{|\Gamma_{T}}$ since $n_{\Gamma}$ points from $\Omega^{1}$ to $\Omega^{2}$.
Using integration by parts for the third term on the right-hand side further yields: 
\begin{equation*}
     \begin{split}
        r_h^1(\varphi_N\chi_T)
        =&\int_{T^{1}}f\varphi_Ndx+\omega_2\int_{\Gamma_T}(g-[K\nabla u_h\cdot n_\Gamma])\varphi_Nds
        +\omega_1\int_{\Gamma_T}k_1\nabla\varphi_N\cdot n_\Gamma[u_h]ds \\
      &  - \gamma h_T^{-1}\int_{\Gamma_T} k_{\Gamma}[u_h]\varphi_N\ds
         - \frac{1}{2}\int_{\partial T^{1}\setminus \Gamma_{T}}k_1\jump{\partial_n u_{h,1}} \varphi_N\ds\\
          &-\sum_{F \in \mathcal{F}_{T}\cap\mathcal F_g^1 } \gamma_g h_F\int_{F}k_1\jump{\partial_n u_{h,1}} \jump{\nabla (\varphi_N \chi_{T}) \cdot n_{F}}\ds.
    \end{split}
\end{equation*}
If $T$ is not a cut element, then the integrals over $\Gamma_T$ vanish. By  the Cauchy-Schwarz inequality and using $ \omega_1 k_1=k_{\Gamma}$ and $\varphi_N=0$ on $\partial \omega_N^1$, we next get:
\begin{equation*}
    \begin{split}
        |r_h^1(\varphi_N\chi_T)|
        \lesssim &\, \|f\|_{T^{1}}\|\varphi_N\|_{T^{1}}
        +\omega_2\|g-[K\nabla u_h\cdot n_{\Gamma}]\|_{\Gamma_T}\|\varphi_N\|_{\Gamma_T} \\
        &+k_{\Gamma} \|[u_h]\|_{\Gamma_T} (\|\nabla\varphi_N\|_{\Gamma_T}
        +\gamma h_T^{-1}\|\varphi_N\|_{\Gamma_T})\\
         &+ \gamma_{g}h_F k_1\|\jump{\partial_n u_{h,1}}\|_{\partial T\cap \Fg^1}\|\nabla \varphi_N\|_{\partial T}
        + k_1\|\jump{\partial_n u_{h,1}}\|_{\partial T\setminus\partial\omega_N^1}\|\varphi_N\|_{\partial T}.
    \end{split}
\end{equation*}
Using the following bounds for the nodal basis function $\varphi_N$: 
\begin{equation*}
    \begin{split}
        &\nrmL[T^1]{\varphi_N}\leq \nrmL[T]{\varphi_N}\lesssim h_T,\quad  \nrmL[\partial T]{\varphi_N}\lesssim \sqrt{h_T},\\
      &  \nrmL[\partial T]{\nabla \varphi_N}\lesssim \frac{1}{\sqrt{h_T}},\quad \nrmL[\Gamma_T]{\varphi_N}\lesssim \frac{1}{\sqrt{h_T}}\nrmL[T]{\varphi_N}\lesssim \sqrt{h_T},
    \end{split}
\end{equation*}
one  finally gets, with $\omega_2\le 1$, that:  
\begin{equation}\label{eq: bound_r_h_cut}
    \begin{split}
      |r_h^1(\varphi_N\chi_T)|
        \lesssim &
        \,h_{T}\| f\|_{T^{1}} 
        +h_{T}^{1/2}\|g-[K\nabla u_h\cdot n_{\Gamma}]\|_{\Gamma_T} 
        + k_{\Gamma}h_T^{-1/2} \|[u_h]\|_{\Gamma_T}\\
          &+k_1h_T^{1/2}(\|\jump{\partial_n u_{h,1}}\|_{\partial T\cap \Fg^1}+\|\jump{\partial_n u_{h,1}}\|_{\partial T\setminus\partial\omega_N^1}).
    \end{split}
\end{equation} 
The terms on $\Gamma_T$ on the right-hand side of \eqref{eq: bound_r_h_cut} vanish when $T\in \T\backslash\T^\Gamma $. 
This ends the theorem's proof.
\end{proof}

\section{Local flux reconstruction}\label{sec:flux}
In this section, we propose a reconstruction of a discrete conservative flux $\sigma_h$, approximation of the continuous flux $\sigma:=K\nabla u$, based on the CutFEM solution $u_h$ and the multiplier $\theta_{h}$. An innovative feature is the use of an immersed Raviart-Thomas space (cf.  \cite{IRT}) on the cut elements, which leads to a reliable and locally efficient flux-based a posteriori error estimator.

In order to simplify the presentation, we assume in the sequel, without loss of generality, that no edge $F \in \F$ is situated entirely on $\Gamma$.

\subsection{The immersed Raviart-Thomas space $ \IRT^0(\mathcal T_h)$}
We begin by recalling the definition of the lowest-order immersed Raviart-Thomas space, recently introduced in \cite{IRT}. On a non-cut element, the polynomial space is the standard Raviart-Thomas space of lowest degree, that is
for any $ T \in\T\backslash \T^\Gamma$, we have: 
\begin{equation*}
    \RT^0(T)=\bigg\{\phi\in P^1(T)^2;\ \phi(x_1,x_2)= \left( \begin{array}{c}
         a  \\
         b 
    \end{array}\right) +c \left( \begin{array}{c}
         x_1  \\
         x_2 
    \end{array} \right),\ a,\,b,\,c \in \mathbb R\bigg\}.
\end{equation*}

In order to introduce the new finite element space on a cut cell $T\in\T^\Gamma$, let $t_\Gamma$ denote the unit tangent vector to $\Gamma$, oriented by a $90^\circ$ clockwise rotation of $n_\Gamma$, and recall that $T^i=T\cap \Omega^i$ ($i=1,2$). The local immersed Raviart-Thomas space $\IRT^0(T)$ is defined in \cite{IRT} as the set of  piecewise $\RT^0$- functions $\psi$, such that $\psi_i:=\psi_{|T^1}$ belong to $\RT^0(T)$ for $i=1,2$ and satisfy the following conditions:
\begin{equation}\label{ch4:eq:conditions_1}
\left\{
   \begin{array}{ll}
    &[\psi\cdot n_\Gamma]=\psi_1\cdot n_\Gamma-\psi_2\cdot n_\Gamma =0,\\
    &[K^{-1}\psi\cdot t_\Gamma](x_\Gamma)= k_1^{-1}\psi_1\cdot t_\Gamma(x_\Gamma)-k_2^{-1}\psi_2\cdot t_\Gamma(x_\Gamma)=0,\\
    &\div \psi_1= \div \psi_2.
   \end{array}
   \right.
\end{equation}
Here above, $x_\Gamma$ is an arbitrary point of $\Gamma_T$. We recall that for $i=1,2$, $(\psi_i\cdot n_\Gamma)_{|\Gamma_T}$ and $(\div \psi_i)_{|T}$ are constant, whereas $(\psi_i\cdot t_\Gamma)_{|\Gamma_T}$ is a priori linear. 

The condition $[\psi\cdot n_\Gamma]=0$ ensures that $\IRT^0(T)\subset H(\div,T)$ and that the (homogeneous) transmission condition across the interface is strongly satisfied. Meanwhile, the other condition on $\Gamma$, $[K^{-1}\psi \cdot t_\Gamma](x_\Gamma)=0$ takes into account the fact that $[\nabla u\cdot t_\Gamma](x_\Gamma)=0$, since $\gjump{u}=0$. Finally, the last condition of (\ref{ch4:eq:conditions_1}) ensures that $\text{dim}\, \IRT^0(T)=\text{dim} \, \RT^0(T) =3$.    

On each element $T\in \T$, the local degrees of freedom are the same as for the standard $\RT^0$ space, that is:   
\begin{equation}\label{eq: ddl_RT}
    N_{T,j}(\psi)=\frac{1}{|F_j|}\int_{F_j}\psi\cdot n_{T}\ds,\quad   1\le j\le 3,
\end{equation}
where $(F_j)_{1\le j\le 3}$ denote the edges of $T$. The global space $\IRT^0(\T)$ is then defined as the set of functions $\psi$ such that: for any $T\in \T\backslash \T^\Gamma $,  $\psi_{|T} \in \RT^0(T)$ whereas for any $T\in \T^\Gamma$,  $\psi_{|T} \in \IRT^0(T)$.
It is easy to check that $\psi$ satisfy the following  property:  
\begin{equation*}
\int_F\jump{\psi\cdot n_F}\ds:= \sum_{i=1}^{2}\int_{F^{i}}\jump{\psi_{i}\cdot n_F}\ds =0,\quad  \forall F\in \F^{int}.
\end{equation*}

Note that contrarily to the $\RT^0(\T)$ space, for a function $\psi\in \IRT^0(T)$ and a cut edge $F\in \F^\Gamma$, $(\psi\cdot n_F)_{|F}$ is only piecewise constant on the edge $F$. Thus, condition $\displaystyle \int_F\jump{\psi\cdot n_F}\ds=0$ does not imply $\jump{\psi\cdot n_F}_{|F}=0$. Hence, $\IRT^0(\T)\not \subset H(\div,\Omega)$.   

Next, we build a conservative flux in the $\IRT^0(\mathcal T_h)$ space. We consider the transmission condition  $[\sigma\cdot n_\Gamma]=g$ on $\Gamma$ in both the homogeneous and the non-homogeneous cases.

\subsection{Homogeneous Neumann transmission condition}\label{subsec:hom}
We assume here that $g=0$ and reconstruct a flux $\sigma_h$ in the space $\IRT^0(\mathcal T_h)$. 
This flux will then strongly satisfy the transmission condition across the interface, thanks to the definition of the immersed Raviart-Thomas space.  

We define $\sigma_h$ by imposing its degrees of freedom as follows:
 \begin{itemize}
 	\item for any $F\in \F^i\backslash \F^\Gamma\, (i=1,2)$, we set
  \begin{equation}\label{eq: def Flux_non cut }
     \int_F\sigma_h\cdot n_F\ds=  \int_F \langle k_i\nabla u_{h,i}\cdot n_F\rangle\ds -\int_F k_i\theta_{h,i}\ds.
 \end{equation}
 	\item  for any cut edge $F\in\F^\Gamma$, we set
 \begin{equation}\label{eq: def Flux_cut }
     \int_F\sigma_h\cdot n_F\ds= \sum_{i=1}^{2} \bigg(\int_{F^i}  \langle k_i\nabla u_{h,i} \cdot n_F\rangle\ds -\int_Fk_i\theta_{h,i}\ds\bigg ).
 \end{equation}
  \end{itemize} 
 
We can equivalently write the equations \eqref{eq: def Flux_non cut } and \eqref{eq: def Flux_cut } as follows:
\begin{equation*}
\begin{split}
\sigma_h\cdot n_F=&  \langle k_i\nabla u_{h,i}\cdot n_F\rangle - k_i\pi_F^0\theta_{h,i}, \quad \forall F\in \F^i\backslash\F^{\Gamma}\quad(i=1,2),\\
 \int_F\sigma_h\cdot n_F\ds= &\sum_{i=1}^{2} \bigg (\int_{F^i} \langle k_i\nabla u_{h,i}\cdot n_F\rangle\ds- k_i\pi_F^0\theta_{h,i}\bigg), \quad \forall F\in \F^\Gamma. 
 \end{split}
 \end{equation*}
Note that $\sigma_h\cdot n_F$ is only piecewise constant on the cut edges, but it belongs to $H(\div, T)$ for any cut triangle $T\in \T^\Gamma$. Next, we establish the conservation property.
\begin{theorem}\label{thrm: conservation_prop_iRT}
One has that
	\begin{equation}\label{eq: conservation_property_IRT}
		-(\div\, \sigma_h)_{|T}=\pi^0_T f,\quad \forall T\in\T.
	\end{equation}
\end{theorem}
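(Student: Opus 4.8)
The plan is to reduce \eqref{eq: conservation_property_IRT} to a single scalar identity on each $T$ and then extract it from the mixed formulation tested against a piecewise-constant function. First I would observe that $\div\sigma_h$ is constant on every $T\in\T$: on a non-cut element this is immediate since $\sigma_h{}_{|T}\in\RT^0(T)$, while on a cut element the third condition in \eqref{ch4:eq:conditions_1}, namely $\div\psi_1=\div\psi_2$, forces $\div\sigma_h$ to take one constant value on all of $T$. Hence $(\div\sigma_h)_{|T}=\frac1{|T|}\int_T\div\sigma_h\dx$ and $\pi^0_T f=\frac1{|T|}\int_T f\dx$, so \eqref{eq: conservation_property_IRT} is equivalent to the scalar identity $-\int_T\div\sigma_h\dx=\int_T f\dx$. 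Because the condition $[\psi\cdot n_\Gamma]=0$ guarantees $\sigma_h{}_{|T}\in H(\div,T)$ even on cut cells, the divergence theorem applies and gives
\[
 \int_T\div\sigma_h\dx=\int_{\partial T}\sigma_h\cdot n_T\ds=\sum_{F\in\mathcal F_T}\epsilon_{T,F}\int_F\sigma_h\cdot n_F\ds,
\]
where $\epsilon_{T,F}:=n_T\cdot n_F\in\{\pm1\}$ compares the outward normal $n_T$ with the fixed edge normal $n_F$.

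Next I would test the identity $b_h(\theta_h,\cdot)=r_h(\cdot)=l_h(\cdot)-\tilde a_h(u_h,\cdot)$, valid on all of $\D$ by \eqref{eq:local_multiplier}, against the indicator $w_T\in\D$ whose $i$-th component equals $\chi_T$ whenever $T\in\T^i$ and vanishes otherwise, so that $w_T=(\chi_T,\chi_T)$ on a cut cell. The point of this choice is that almost every term collapses: since $\nabla\chi_T=0$, the bulk forms $a_i$ and the ghost-penalty forms $j_i$ vanish, as do the gradient contributions in $a_\Gamma$ and in $d_h$; and since $[w_T]_{|\Gamma_T}=\chi_T-\chi_T=0$, the remaining Nitsche terms in $a_\Gamma$ vanish too. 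Using $\jump{\chi_T}_{|F}=\epsilon_{T,F}$ on every $F\in\mathcal F_T$, this leaves $\tilde a_h(u_h,w_T)=-d_h(u_h,w_T)=-\sum_{i}\sum_{F\in\mathcal F_T\cap\F^i}\epsilon_{T,F}\int_{F\cap\Omega^i}\langle k_i\nabla u_{h,i}\cdot n_F\rangle\ds$. On the data side $l_h(w_T)=\int_T f\dx+\int_{\Gamma_T}g\ds$, the interface contribution arising from $\{w_T\}^*=(\omega_1+\omega_2)\chi_T=\chi_T$. Finally, since $\theta_{h,i}{}_{|F}\in P^1(F)$ the trapezoidal rule underlying $b_h^i$ is exact, $\tfrac{h_F}{2}\sum_{N\in\mathcal N_F}(\theta_{h,i})_{|F}(N)=\int_F\theta_{h,i}\ds$, whence $b_h(\theta_h,w_T)=\sum_i\sum_{F\in\mathcal F_T\cap\F^i}\epsilon_{T,F}\,k_i\int_F\theta_{h,i}\ds$.

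Combining these evaluations in $b_h(\theta_h,w_T)=r_h(w_T)$ produces
\[
 \int_T f\dx+\int_{\Gamma_T}g\ds=\sum_{i}\sum_{F\in\mathcal F_T\cap\F^i}\epsilon_{T,F}\Big(k_i\int_F\theta_{h,i}\ds-\int_{F\cap\Omega^i}\langle k_i\nabla u_{h,i}\cdot n_F\rangle\ds\Big).
\]
Grouping the double sum edge by edge and invoking the flux definitions \eqref{eq: def Flux_non cut } on non-cut edges and \eqref{eq: def Flux_cut } on cut edges, the right-hand side collapses to $-\sum_{F\in\mathcal F_T}\epsilon_{T,F}\int_F\sigma_h\cdot n_F\ds=-\int_{\partial T}\sigma_h\cdot n_T\ds=-\int_T\div\sigma_h\dx$. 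Since we are in the homogeneous case $g=0$, the interface term drops and I obtain $\int_T f\dx=-\int_T\div\sigma_h\dx$, which is precisely the reduced identity and hence \eqref{eq: conservation_property_IRT}.

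I expect the main obstacle to lie in the cut cells rather than in the algebra. Two structural points need genuine care: verifying that $\div\sigma_h$ is truly a single constant on $T$ (which rests on the third $\IRT^0$ condition) and that the divergence theorem is legitimate across $\Gamma_T$ (which rests on the strong normal continuity $[\psi\cdot n_\Gamma]=0$); and the bookkeeping that reassembles the per-subdomain edge contributions over $F\cap\Omega^i=F^i$ into the single cut-edge degree of freedom \eqref{eq: def Flux_cut }, keeping the orientation sign $\epsilon_{T,F}$ and the identification $\jump{\chi_T}_{|F}=\epsilon_{T,F}$ consistent throughout. The cancellation of $\int_{\Gamma_T}g\ds$ is specific to $g=0$; in the nonhomogeneous setting this term should survive and signal a corresponding modification of the reconstruction.
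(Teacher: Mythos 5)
Your proof is correct and follows essentially the same route as the paper: testing the mixed formulation with the indicator $v_h=(\chi_T,\chi_T)$ on cut cells (and the one-component indicator on non-cut cells), observing that $a_i$, $j_i$ and $a_\Gamma$ vanish, and converting $-d_h(u_h,v_h)+b_h(\theta_h,v_h)$ into $-\int_T\div\sigma_h\dx$ via the flux definitions \eqref{eq: def Flux_non cut }--\eqref{eq: def Flux_cut }. Your additional explicit steps --- the constancy of $\div\sigma_h$ on cut cells from the third condition in \eqref{ch4:eq:conditions_1}, the exactness of the trapezoidal rule against the constant jump $\jump{\chi_T}_{|F}=\epsilon_{T,F}$, and the orientation bookkeeping --- merely spell out what the paper's proof leaves implicit.
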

\begin{proof} Let $T\in \T$. We start from  
$\displaystyle \int_{T} \div \sigma_h\dx= \int_{\partial T}\sigma_h\cdot n_T \ds$ and use the flux definition \eqref{eq: def Flux_non cut }-\eqref{eq: def Flux_cut }. On a non-cut cell, we obtain \eqref{eq: conservation_property_IRT} by testing the mixed formulation \eqref{eq: Mixed formulation_cut} with $(\chi_T,0)$ if $T\in \T^1$, and with $(0,\chi_T)$ if $T\in \T^2$. 

So in the sequel, we focus on a cut cell $T\in \mathcal T_h^{\Gamma}$ and test \eqref{eq: Mixed formulation_cut} with $v_h=(\chi_T,\chi_T)$. This yields that on any cell $T'\in \mathcal T_h$ one has that $(\nabla v_{h,i})_{|T'}=0$, hence 
\begin{equation}\label{eq:a_i}
a_i(u_{h,i},v_{h,i})=j_i(u_{h,i},v_{h,i})=0, \quad i=1,2.
\end{equation}
Moreover, one also has that
\begin{equation}\label{eq:a_Gamma}
a_{\Gamma}(u_h,v_h)= \sum_{T'\in\T^{\Gamma}}\int_{\Gamma_{T'}}\bigg( \frac{\gamma k_{\Gamma}}{h_{T'}}[u_h]-\{K\nabla u_h\cdot n_{\Gamma}\}\bigg)[v_h]\ds=0,
\end{equation}
since for any cut cell $T'\in\T^{\Gamma}$, one has $(v_{h,1})_{|T'}=(v_{h,2})_{|T'}$ and therefore, $[v_h]_{|\Gamma_{T'}}=0$.

Next, for any $F\subset \partial T$, we can write that 
$$\sigma_h\cdot n_T|_{F} =\sigma_h\cdot n_F \jump{v_h},$$
so we obtain, using \eqref{eq: def Flux_non cut } and \eqref{eq: def Flux_cut }, as well as $b_h(\theta_h,v_h) = \displaystyle \sum_{F \in \mathcal{F}_{T}} \int_F k_i\theta_{h,i}\ds$, that
\begin{equation*}
	-\int_{T}\div \sigma_h\dx=-\sum_{F\in \mathcal{F}_{T}} \int_{F}\sigma_h\cdot n_F\jump{v_h}\ds=-d_h(u_h,v_h)+b_h(\theta_h,v_h).
\end{equation*} 
Using next the definition of $\tilde a_h(\cdot,\cdot)$, as well as \eqref{eq:a_i} and \eqref{eq:a_Gamma}, we further get:
\begin{equation*}
	-\int_{T}\div \sigma_h\dx=\tilde a_h(u_h,v_h)+b_h(\theta_h,v_h)=l_h(v_h)=\int_{T}f\dx,
\end{equation*} 
which yields the desired relation \eqref{eq: conservation_property_IRT}.
\end{proof}

\subsection{Non-homogeneous Neumann transmission  condition}
 We can now treat the general case  $g\neq 0$.  For any $T\in \T^\Gamma$, we set $g_h=\pi_{\Gamma_T}^0g$ and define the linear continuous operator $\mathcal L_T:  \mathcal{RT}^0(T^1)\times \mathcal{RT}^0(T^2)\longrightarrow \mathbb R^6$ such that for any $\tau=(\tau_1,\tau_2)$,  
\begin{equation*}
	\mathcal L_T(\tau_1,\tau_2)	=\bigg(\bigg(\int_{F_j}\tau\cdot n_{F_j}\ds \bigg)_{1\leq j\leq3 } ,\,
		[\tau\cdot n_\Gamma],\,
		\div\tau_1 -\div\tau_2,\,
		[K^{-1}\tau\cdot t_\Gamma] (x_T)\bigg),
\end{equation*}
where on a cut side $ F$, we have   that 
$$ \displaystyle  \int_{F}\tau\cdot n_F\ds= \sum_{i=1}^{2}\int_{F^i}\tau_i\cdot n_F\ds.
$$

The operator $\mathcal L_T $ is injective due to the unisolvence of the $\IRT^0(T)$ space, and therefore surjective. Hence, there exists a unique flux 
$$\sigma^g_T=(\sigma^g_1,\sigma^g_2)\in  \mathcal{RT}^0(T^1)\times \mathcal{RT}^0(T^2)$$ such that $\mathcal L_T(\sigma^g_T)=(0,0,0,g_h,0,0)$. We denote by $ \sigma^g $ the zero extension to $\Omega$: 
$\sigma^g=\displaystyle\sum_{T\in \T^\Gamma} \sigma^g_T \chi_T$. We now define the global flux $\sigma_h^{g}$ as follows:  
\begin{equation}\label{eq:flux_g}
    \sigma_h^{g}= \sigma_h +\sigma^g,
\end{equation}
where $\sigma_h\in \IRT^0(\T)$ is the flux corresponding to $g=0$, defined by \eqref{eq: def Flux_non cut }-\eqref{eq: def Flux_cut }.

We can establish the following conservation property. On a cut triangle $T$, we use the discrete divergence operator $\div_h$ defined, for a function $\tau$ such that $\tau_{|T^i}\in H(\div,T^i)$,  by $(\div_h \tau)_{|T^i}=\div (\tau_{|T^i})$ for $i=1,2$.
\begin{theorem}
One has that
	\begin{equation}\label{eq: conservation_property_2}
		-(\div_h \sigma_h^g)_{|T}=\pi^0_T f,\quad \forall T\in\T.
	\end{equation}
\end{theorem}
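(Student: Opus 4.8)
The plan is to exploit the linearity of the construction, writing $\sigma_h^g = \sigma_h + \sigma^g$ and treating the two contributions separately. The first observation is that $\div_h \sigma_h^g$ is in fact a single constant on each $T \in \T$: on a cut cell the third condition in \eqref{ch4:eq:conditions_1} gives $\div (\sigma_h)_{|T^1} = \div (\sigma_h)_{|T^2}$, while the fifth component of the identity $\mathcal L_T(\sigma^g_T) = (0,0,0,g_h,0,0)$ forces $\div \sigma^g_1 = \div \sigma^g_2$; hence $(\div_h \sigma_h^g)_{|T^1} = (\div_h \sigma_h^g)_{|T^2}$. Consequently it suffices to compute the single number $\int_T \div_h \sigma_h^g \dx$ and divide by $|T|$ — note that $\sigma_h^g$ genuinely requires the discrete divergence on cut cells, since $[\sigma^g \cdot n_\Gamma] = g_h \neq 0$ takes it out of $H(\div,T)$ — so that \eqref{eq: conservation_property_2} reduces to proving $-\int_T \div_h \sigma_h^g \dx = \int_T f \dx$ for every $T$.

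For a non-cut cell $T \in \T \setminus \T^\Gamma$ there is nothing new: the correction vanishes there, since $\sigma^g$ is supported on $\T^\Gamma$, and $\div_h$ coincides with $\div$; moreover the interface term in $l_h$ is absent, so the argument of Theorem \ref{thrm: conservation_prop_iRT} applies unchanged and yields $-\int_T \div \sigma_h \dx = \int_T f \dx$. I would therefore concentrate on a cut cell $T \in \T^\Gamma$ and treat the two pieces in turn.

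For the $\sigma_h$ part I would repeat the argument of Theorem \ref{thrm: conservation_prop_iRT}, testing the mixed formulation \eqref{eq: Mixed formulation_cut} with $v_h = (\chi_T, \chi_T)$, for which $a_i(u_{h,i},v_{h,i}) = j_i(u_{h,i},v_{h,i}) = 0$ and $a_\Gamma(u_h,v_h) = 0$; this gives $-\int_T \div \sigma_h \dx = l_h((\chi_T,\chi_T))$. The only new feature is that now $g \neq 0$: since $\{v_h\}^* = \omega_2 + \omega_1 = 1$ on $\Gamma_T$, the load functional produces the extra interface term, so that $l_h((\chi_T,\chi_T)) = \int_T f \dx + \int_{\Gamma_T} g \ds$. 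For the $\sigma^g$ part I would apply the divergence theorem to $\sigma^g_1$ on $T^1$ and to $\sigma^g_2$ on $T^2$ and add the two identities. The edge contributions collapse into $\sum_{j} \int_{F_j} \sigma^g \cdot n_{F_j}\ds$ and vanish, because the first three components of $\mathcal L_T(\sigma^g_T)$ are zero; the two interface integrals combine, via the outward normals $\pm n_\Gamma$ of $T^1$ and $T^2$, into $\int_{\Gamma_T}[\sigma^g \cdot n_\Gamma]\ds = \int_{\Gamma_T} g_h \ds = \int_{\Gamma_T} g \ds$, the last equality because $g_h = \pi^0_{\Gamma_T} g$ preserves the mean. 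This gives $\int_T \div_h \sigma^g \dx = \int_{\Gamma_T} g \ds$.

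Adding the two contributions, $-\int_T \div_h \sigma_h^g \dx = \big(\int_T f \dx + \int_{\Gamma_T} g \ds\big) - \int_{\Gamma_T} g \ds = \int_T f \dx$, which is the required identity after division by $|T|$. The delicate points I would watch are the orientation bookkeeping on $\Gamma_T$ in the piecewise divergence theorem and, more conceptually, the fact that the datum $g$ enters the computation twice: implicitly, through the CutFEM pair $(u_h,\theta_h)$ that defines $\sigma_h$ via \eqref{eq: def Flux_non cut }--\eqref{eq: def Flux_cut }, and explicitly through the correction $\sigma^g$. The essence of the construction, and the crux of the proof, is that these two appearances of $g$ cancel exactly, restoring the clean conservation law $-\div_h \sigma_h^g = \pi^0_T f$.
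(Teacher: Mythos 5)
Your proof is correct and follows essentially the same route as the paper's: testing the mixed formulation \eqref{eq: Mixed formulation_cut} with $v_h=(\chi_T,\chi_T)$ to get $-\int_T \div \sigma_h\dx = \int_T f\dx + \int_{\Gamma_T} g\ds$, then applying the element-wise divergence theorem to $\sigma^g$ so that the two interface contributions cancel via $\int_{\Gamma_T} g_h\ds = \int_{\Gamma_T} g\ds$. Your additional observations --- that $\div_h\sigma_h^g$ is a single constant on each cut cell (via the fifth component of $\mathcal L_T$ and the third condition of \eqref{ch4:eq:conditions_1}), and the explicit treatment of non-cut cells --- are points the paper leaves implicit, and they correctly justify passing from the integral identity to the pointwise statement \eqref{eq: conservation_property_2}.
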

\begin{proof}We treat here only the case of a cut cell $T\in\T^\Gamma$. As in the proof of Theorem \ref{thrm: conservation_prop_iRT}, we have: 
\begin{equation}\label{prop: sigma_1}
    \displaystyle	-\int_{T}\div\sigma_h\dx=l_h(v_h)=\int_{T}f\dx+ \int_{\Gamma_T} g\ds.
\end{equation}  
Note that we also have, integrating by parts on each $T^i$ and using the degrees of freedom of $\sigma^g$, that:
   \begin{equation}\label{prop: sigma_g}
   	\displaystyle \int_T\div_h\sigma^g\dx = \sum_{i=1}^{2}\displaystyle \int_{T^i}\div\sigma_i^g\dx=\int_{\Gamma_T}[\sigma^g\cdot n_\Gamma ]\ds=\int_{\Gamma_T}g_h\ds.
   \end{equation}
Using \eqref{prop: sigma_1}, \eqref{prop: sigma_g} and the definition \eqref{eq:flux_g}, we immediately obtain:
\[
\displaystyle \int_T\div_h\sigma^g_h\dx =\displaystyle \int_T\div \sigma_h\dx +\displaystyle \int_T\div_h\sigma^g\dx =-\int_{T}f\dx,
\]
and, hence, the announced result.       
\end{proof}

\section{Application to a posteriori error analysis}\label{sec: A posteriori_IRT}
For the sake of simplicity, we assume here that $g=0$.  

We set $\displaystyle \tau_h=K^{-1/2}(\sigma_h-K\nabla_h u_h)$, where $\sigma_h \in \IRT^0(\T)$ is the flux introduced in Subsection \ref{subsec:hom}, and we define the a posteriori local error estimator:
\begin{equation*}
    \eta_{T}
    =\|K^{-1/2}(\sigma_h - K\nabla_h u_h)\|_{T}=\|\tau_h \|_{T},\quad \forall T\in \mathcal T_h.
\end{equation*}
Since $u_h$ is discontinuous across $\Gamma$, we use the discrete gradient $\nabla_h$ in a cut triangle $T\in \T^\Gamma$; thus, $ \nabla_hu_h\in L^2(T)$ is defined by its $L^2$-restriction to each subdomain:
\[
(\nabla_h {u_{h}})_{|T\cap\Omega^{i}} = (\nabla {u_{h,i}})_{|T\cap \Omega^{i}},\quad 1\leq i\leq 2.
\]

In addition, on the cut cells $T\in \T^\Gamma$ we also consider
\begin{equation*}
    \tilde{\eta}_T=  \frac{\sqrt {h_T k_\Gamma}}{\sqrt {h_T^{min}|\Gamma_T|}}\| [u_h]\|_{\Gamma_T},\quad \forall T\in \T^\Gamma,
\end{equation*}
where  $h_T^{min}=\min\{|F^i|; \  F\in \partial T\cap \F^\Gamma, \  1\leq i \leq 2 \}$.

We introduce another local estimator on the cut edges: 
\begin{equation*}
	\eta_F=\frac{\sqrt{h_F}}{\sqrt{k_{\Gamma}}}\|\jump{\sigma_h\cdot n_F}-\pi_F^0\jump{\sigma_h\cdot n_F}\|_{F}=\frac{\sqrt{h_F}}{\sqrt{k_{\Gamma}}}\|\jump{\sigma_h\cdot n_F}\|_{F},\,\quad \forall F\in \mathcal F_h^{\Gamma}.
\end{equation*}

The corresponding global error estimators are given by: 
\begin{equation*}
  \eta =\bigg (\sum_{T\in \mathcal{T}_h}\eta_{T}^2\bigg)^{1/2},
        \qquad \eta_{\Gamma}=\bigg(
         \sum_{F\in \F^\Gamma}\eta_F^2+
         \sum_{T\in\T^\Gamma}\tilde{\eta}_T^2\bigg)^{1/2},
\end{equation*}
while the data approximation term is given by
\begin{equation*}
	\epsilon(\Omega)= 
         \left(\sum_{T\in \T}\frac{h_T^2}{\delta_T}\|f-\pi^0_T f\|_{T}^2 \right)^{1/2}, \qquad 
         \delta_T= \left\{
	\begin{array}{ll}
		k_i & \text{if}\ T\in \T^i\backslash\T^\Gamma,\\
		k_\Gamma &\text {if} \ T\in \T^\Gamma.
	\end{array}\right.
\end{equation*}

We have established in \cite{Article2} the following error bounds regarding the reliability and local efficiency of the a posteriori estimator $\eta+\eta_{\Gamma}$.

\begin{theorem}[Reliability]
Let $u$ and $u_h$ be the solutions of \eqref{eq: continuous_problem} and \eqref{eq: Cutfem_Formulation}, respectively. There exists a constant $C>0$ independent of the mesh, the coefficients and the interface such that   
	\begin{equation}\label{eq:reliab}
	\bigg(\sum_{i=1}^{2}\|k_i^{1/2}\nabla (u-u_{h,i})\|^2_{\Omega^i}\bigg)^{1/2}\leq \eta +C\left( \eta_{\Gamma}+\epsilon(\Omega)\right).
	\end{equation}
\end{theorem}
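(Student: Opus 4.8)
The plan is to establish reliability by bounding the true error in the energy norm by the distance between the continuous flux $\sigma = K\nabla u$ and its reconstruction $\sigma_h$, then controlling that distance by the computable estimators. The natural starting point is a residual-type identity: for any $v \in H_0^1(\Omega)$, the weak formulation \eqref{eq: continuous_problem} gives $\int_\Omega K\nabla u \cdot \nabla v\,dx = \int_\Omega fv\,dx$ (recall $g=0$). I would introduce the error $e = u - u_h$ (understood piecewise, using $\nabla_h$ on cut cells) and use the equilibrated flux $\sigma_h$, which by Theorem \ref{thrm: conservation_prop_iRT} satisfies $-\div\sigma_h = \pi_T^0 f$ on each $T$. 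The key algebraic step is to write
\begin{equation*}
\sum_{i=1}^2 \|k_i^{1/2}\nabla(u-u_{h,i})\|_{\Omega^i}^2 = \int_\Omega K\nabla e \cdot \nabla e\,dx,
\end{equation*}
and then insert $\sigma_h$ via $K\nabla e = K\nabla u - \sigma_h + (\sigma_h - K\nabla_h u_h)$. The last term is exactly $K^{1/2}\tau_h$, whose weighted $L^2$-norm is the estimator $\eta$; this is what produces the leading term $\eta$ on the right-hand side of \eqref{eq:reliab} with constant one.

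\textbf{The main work} is to control the remaining contribution involving $\sigma_h - K\nabla u$ tested against the error. First I would integrate by parts cell-by-cell and collect boundary terms. Because $\sigma_h \in \IRT^0(\T)$ is \emph{not} in $H(\div,\Omega)$—its normal trace jumps on cut edges $F\in\F^\Gamma$—the integration by parts leaves residual edge terms $\int_F \jump{\sigma_h\cdot n_F}\,\{\cdot\}\,ds$. Since $\int_F \jump{\sigma_h\cdot n_F}\,ds = 0$, I would subtract the edge-average $\pi_F^0\jump{\sigma_h\cdot n_F}$ for free and bound the resulting term by $\|\jump{\sigma_h\cdot n_F} - \pi_F^0\jump{\sigma_h\cdot n_F}\|_F$ times a trace of the error, which after a scaled trace inequality yields precisely $\eta_F$ times the energy norm. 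The data-oscillation discrepancy between $\div\sigma_h = -\pi_T^0 f$ and the true $-f$ contributes the term $\epsilon(\Omega)$, with the weights $\delta_T$ arising from the correct Poincaré/scaling constants on cut versus uncut cells.

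\textbf{The hard part} is handling the interface jump of $u_h$. Since $u_h$ is discontinuous across $\Gamma$ whereas the true $u$ satisfies $\gjump{u}=0$, a conforming comparison is impossible: one must account for $[u_h]_{|\Gamma_T}\neq 0$. I expect the cleanest route is to introduce a conforming approximation (an averaging/Oswald-type interpolant $\tilde u_h \in H_0^1(\Omega)$) and split the error as $u - u_h = (u - \tilde u_h) + (\tilde u_h - u_h)$. The genuinely conforming part $u - \tilde u_h$ is estimated by the flux machinery above, while the nonconforming remainder $\tilde u_h - u_h$ is controlled by the interface jump through the scaled quantity $\tilde\eta_T$; the delicate scaling weight $\sqrt{h_T k_\Gamma}/\sqrt{h_T^{\min}|\Gamma_T|}$ is designed to keep this robust with respect to how badly the interface cuts each element. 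Establishing that this averaging step is robust in the diffusion contrast—so that the constant $C$ multiplying $\eta_\Gamma + \epsilon(\Omega)$ is independent of $k_1/k_2$ and of the mesh-interface geometry—is the principal obstacle, and it is where the weighted means $\{\cdot\}$, $\{\cdot\}^*$ with weights $\omega_1,\omega_2$ and the harmonic mean $k_\Gamma$ play their essential role. Collecting all four contributions and using Cauchy--Schwarz gives \eqref{eq:reliab}; since the detailed constants are carried out in \cite{Article2}, here I would present the identity, the cell-wise integration by parts, and the four resulting estimator terms, deferring the sharpest robustness constants to the cited analysis.
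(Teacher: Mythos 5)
You should know at the outset that this paper does not actually contain a proof of the reliability theorem: it states the bound and explicitly defers the argument to the companion paper \cite{Article2}, so there is no in-paper proof to compare yours against. Judged on its own terms, your plan has the architecture one would expect for this estimator, and it correctly attributes each term to its mechanism: the constant-one leading term $\eta$ from Cauchy--Schwarz on $\int_\Omega(\sigma_h-K\nabla_h u_h)\cdot\nabla_h e\,dx$; the edge estimator $\eta_F$ from the mean-zero property $\int_F\jump{\sigma_h\cdot n_F}\,ds=0$ on cut edges (compensating $\IRT^0(\T)\not\subset H(\div,\Omega)$); the oscillation $\epsilon(\Omega)$ from $\div\sigma_h=-\pi^0_Tf$ with the $\delta_T$-weighted Poincar\'e scaling; and the interface nonconformity isolated in $\tilde\eta_T$.

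There are, however, two genuine gaps. First, your sketch runs two incompatible arguments in parallel. If you insert $\sigma_h$ directly against $\nabla_h e$ and integrate by parts cell by cell, the interface contributes terms of the form $\int_{\Gamma_T}(K\nabla u-\sigma_h)\cdot n_\Gamma\,[u_h]\,ds$, which contain the non-computable normal flux $K\nabla u\cdot n_\Gamma$; the ``for free'' subtraction of $\pi_F^0$ works on cut edges, but nothing analogous removes this interface term. That is exactly why the conforming potential $\tilde u_h$ must enter \emph{before} the integration by parts: the residual identity has to be tested with $u-\tilde u_h\in H^1_0(\Omega)$, not with $e$. As written, the first half of your argument stalls on $\Gamma$, and the claimed constant $1$ on $\eta$ is asserted rather than secured. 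Second, the decisive lemma is left unproven and is nonstandard in the CutFEM setting: you need an averaging (Oswald-type) operator acting on a function with \emph{two} values per node on cut cells and a jump supported on $\Gamma_T$ rather than on mesh edges, together with the bound $\|K^{1/2}\nabla_h(\tilde u_h-u_h)\|_\Omega\lesssim\big(\sum_{T\in\T^\Gamma}\tilde\eta_T^2\big)^{1/2}$ with a constant independent of the contrast $k_1/k_2$ and of how $\Gamma$ cuts each cell. This is precisely where the weight $\sqrt{h_Tk_\Gamma}/\sqrt{h_T^{min}|\Gamma_T|}$ must be earned; invoking the weights $\omega_1,\omega_2$ and the harmonic mean $k_\Gamma$ names the ingredients but does not substitute for the construction. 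Deferring both points to \cite{Article2} mirrors what the paper itself does, but it leaves your proposal a roadmap rather than a proof.
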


The previous Theorem shows that the $H^1$-seminorm of the error is bounded by the main estimator $\eta$ with a reliability constant equal to $1$, in agreement with well-known results for equilibrated flux-based estimators. The additional estimator $\eta_{\Gamma}$ and the higher-order term $\epsilon(\Omega)$ in estimate (\ref{eq:reliab}) are multiplied by a constant which is independent of the mesh size, the diffusion coefficients and the interface geometry.

The local efficiency is established with respect to the following norm of the error:
\begin{equation*}
		\| v_h\|_{h,\Delta_T}^2= \sum_{i=1}^{2}\bigg(\|k_i^{1/2}\nabla v_{h,i}\|^2_{\Delta_T\cap \Omega^i}+j_{i,\Delta_T}(v_{h,i},v_{h,i})\bigg)
	+\sum_{T\in \mathcal{T}_h^{\Gamma}\cap \Delta_T}\int_{\Gamma_T}\frac{k_{\Gamma}}{h_T}[v_h]^2 \ds,
\end{equation*}
where 
$$\displaystyle\Delta_T=\bigcup_{N\in \mathcal N_T}\omega_N,\quad j_{i,\Delta_T}(v_{h,i},v_{h,i}):=\sum_{F\in\mathcal{F}_g^i\cap\Delta_T} h_F \int_{F}k_i\jump{\partial_n v_{h,i}}^2\ds\quad (i=1,\,2).$$
We recall that $\mathcal N_T$ is the set of vertices of $T$ and $\omega_N$ the set of triangles sharing the node $N$. It is also useful to introduce, for $F\in \F^{\Gamma}$, the notation $\displaystyle\Delta_F=\bigcup_{T,\,\partial T\supset F}\Delta_T$.

In the following, we give the local bound for each estimator $\eta_T$, $\tilde \eta_T$ and $\eta_F$. The main difficulty lies in obtaining the robustness with respect to the coefficients and interface geometry on the cut elements $T\in \T^\Gamma$. For theoretical reasons only, to establish the efficiency of $\eta_T$ with the best constant, we make the following assumption.

\begin{assumption}\label{ass: 1}
For any $T\in \T^\Gamma$, there exist closed, regular shaped triangles $\tilde{T}^1\subset \Omega^1,\tilde{T}^2\subset \Omega^2$ such that  they have  $\Gamma_T$ as a common side:  $ \tilde{T}^1\cap \tilde{T}^2=\Gamma_T$.
\end{assumption}

\begin{theorem}\label{thrm: efficiency_IRT}
Under Assumption \ref{ass: 1}, for any $ T\in \T$ there exists a positive constant $C_T$ such that
	\begin{equation}\label{estim_eff}
		\forall T\in \T,\quad \eta_T \lesssim C_{T} \left(\|u-u_h\|_{h,\Delta_T}+\epsilon(\Delta_T)\right),
	\end{equation}
with $C_T=1$ if $T\in \T\backslash \T^\Gamma$ and $C_{T}= \displaystyle \max_{T'\in \Delta_T \cap \T^{\Gamma}}\frac{h_{T'}^{1/2}}{|\Gamma_{T'}|^{1/2}}\frac{k_{max}^{3/2}}{k_{min}^{3/2}}$ if $T\in \T^\Gamma$.
\end{theorem}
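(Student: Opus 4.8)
The plan is to run a Verf\"urth-type bubble-function argument, adapted to the cut setting, after first reducing $\eta_T$ to standard residual quantities. The starting observation is that on each subdomain piece $K\nabla_h u_h$ is piecewise constant, so $\sigma_h-K\nabla_h u_h$ is a (piecewise) $\RT^0$ field; by the usual norm equivalence on $\RT^0$, the quantity $\|\sigma_h-K\nabla_h u_h\|_T$ is controlled by $h_T$ times its normal components on the edges $F\subset\partial T$. Using the flux definition \eqref{eq: def Flux_non cut }--\eqref{eq: def Flux_cut } together with the elementary identity $\langle k_i\nabla u_{h,i}\cdot n_F\rangle-k_i\nabla u_{h,i}\cdot n_F=\pm\tfrac12 k_i\jump{\partial_n u_{h,i}}$, each normal component equals $\tfrac12 k_i\jump{\partial_n u_{h,i}}+k_i\pi_F^0\theta_{h,i}$, so that after scaling (and summing the two pieces on cut cells)
\begin{equation*}
\eta_T\lesssim \sum_{F\subset\partial T}k_i^{1/2}h_F^{1/2}\big(\|\jump{\partial_n u_{h,i}}\|_F+\|\theta_{h,i}\|_F\big).
\end{equation*}

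Next I would dispose of the multiplier contribution. Writing $\theta_{h,i}=\sum_N\theta_N^i$ with local support, I would use \eqref{eQ:bound_theta_i} and Theorem \ref{thm: bound theta_i_Pi_T^1} to re-express $k_i^{1/2}h_F^{1/2}\|\theta_{h,i}\|_F$ in terms of the same normal-derivative jumps, the interface jump $(k_\Gamma/h_T)^{1/2}\|[u_h]\|_{\Gamma_T}$, the flux-data residual $h_T^{1/2}\|g-[K\nabla u_h\cdot n_\Gamma]\|_{\Gamma_T}$ (here $g=0$), and the element term $k_i^{-1/2}h_T\|f\|_{T^i}$. The interface jumps are already part of the error norm through $\int_{\Gamma_T}\tfrac{k_\Gamma}{h_T}[u_h]^2\ds$, so nothing is lost there; the only care needed is that the element data should eventually appear as the oscillation $\epsilon(\Delta_T)$ and not the full $\|f\|$, which is recovered through the element-bubble step below using the conservation property $-\div\sigma_h=\pi^0_T f$.

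For the genuinely interior residuals the argument is classical. On an interior edge $F$ away from $\Gamma$, the edge-bubble test function bounds $k_i^{1/2}h_F^{1/2}\|\jump{\partial_n u_{h,i}}\|_F$ by $\|k_i^{1/2}\nabla(u-u_{h,i})\|$ over the two adjacent triangles plus local data oscillation, while the element bubble converts the interior data into $\epsilon(\Delta_T)$; summing over the edges and elements of $\Delta_T$ gives the estimate with $C_T=1$ for $T\in\T\backslash\T^\Gamma$, in accordance with the equilibrated-flux philosophy.

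The main obstacle is the robustness on cut cells $T\in\T^\Gamma$, where $\Gamma$ may cut off slivers, making $|\Gamma_T|$ or $|F^i|$ arbitrarily small relative to $h_T$, so that the classical bubbles degenerate. Here I would invoke Assumption \ref{ass: 1}: the fictitious regular-shaped triangles $\tilde{T}^1,\tilde{T}^2$ sharing $\Gamma_T$ supply shape-regular domains on which to build interface/edge bubbles and to run trace and inverse inequalities with constants depending only on shape regularity. The cost is twofold: the geometric factor $h_{T'}^{1/2}/|\Gamma_{T'}|^{1/2}$, coming from rescaling the interface segment and the cut edges against the element diameter, and the coefficient ratio $k_{max}^{3/2}/k_{min}^{3/2}$, arising from the coupling of $\sigma_1$ and $\sigma_2$ through the immersed transmission conditions \eqref{ch4:eq:conditions_1} and from the coefficient-weighted means. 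Showing that these contributions combine into exactly $C_{T}=\max_{T'\in\Delta_T\cap\T^\Gamma}\tfrac{h_{T'}^{1/2}}{|\Gamma_{T'}|^{1/2}}\tfrac{k_{max}^{3/2}}{k_{min}^{3/2}}$, with no hidden dependence on the cut configuration, is the delicate part and the technical heart of the argument (carried out in detail in \cite{Article2}).
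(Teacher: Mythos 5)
The first thing to note is that this paper does not itself prove Theorem \ref{thrm: efficiency_IRT}: it is stated as a result established in the companion work \cite{Article2}, so your proposal can only be compared against the toolkit assembled here, namely the flux definition \eqref{eq: def Flux_non cut }--\eqref{eq: def Flux_cut }, the local multiplier bound of Theorem \ref{thm: bound theta_i_Pi_T^1}, the conservation property \eqref{eq: conservation_property_IRT}, and Assumption \ref{ass: 1}. Measured that way, your outline follows the natural route and uses exactly those ingredients: reduce $\eta_T$ by $\RT^0$ norm equivalence to the edge quantities $\tfrac12 k_i\jump{\partial_n u_{h,i}}+k_i\pi_F^0\theta_{h,i}$, absorb the multiplier via Theorem \ref{thm: bound theta_i_Pi_T^1}, and finish with bubble-function efficiency arguments, with Assumption \ref{ass: 1} supplying shape-regular supports on cut cells. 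As a skeleton this is sound and consistent with the paper.

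There are, however, two genuine gaps. First, after invoking Theorem \ref{thm: bound theta_i_Pi_T^1} you inherit the interface residual $h_T^{1/2}\|[K\nabla u_h\cdot n_\Gamma]\|_{\Gamma_T}$, which does \emph{not} vanish when $g=0$ (only the datum $g$ does, since $u_h$ is discontinuous and its flux jump across $\Gamma$ is nonzero) and is \emph{not} contained in the norm $\|u-u_h\|_{h,\Delta_T}$, which controls only $[u_h]$ on $\Gamma_T$, not $[K\nabla u_h\cdot n_\Gamma]$. Your text silently drops it (``the interface jumps are already part of the error norm \dots the only care needed is the element data''), but this term needs its own efficiency bound, via an interface bubble supported on $\tilde T^1\cup\tilde T^2$ from Assumption \ref{ass: 1}; this is precisely where the factor $h_{T'}^{1/2}/|\Gamma_{T'}|^{1/2}$ and a share of the $k$-powers in $C_T$ originate, so it cannot be waved through. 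Second, on a cut cell the ``norm equivalence on $\RT^0$'' step is not standard: the degrees of freedom of $\sigma_h\in\IRT^0(T)$ are whole-edge integrals while the field is only piecewise $\RT^0$, with the two pieces coupled through \eqref{ch4:eq:conditions_1}; passing from $\|\tau_h\|_T$ to the edge data means inverting the $6\times 6$ system \eqref{final_system}, whose conditioning degenerates with the cut geometry and with $k_1/k_2$ -- this is where the claimed constant must actually be extracted, and asserting that the contributions ``combine into exactly $C_T$'' restates the theorem rather than proving it. Relatedly, your coefficient bookkeeping is off by half-powers at the multiplier stage: Theorem \ref{thm: bound theta_i_Pi_T^1} controls $k_i h_F^{1/2}\|\theta_N^i\|_F$ whereas the estimator reduction needs $k_i^{1/2}h_F^{1/2}\|\theta_{h,i}\|_F$, so every inherited residual carries an extra $k_i^{-1/2}$; since the content of the theorem is the explicit exponent in $k_{max}^{3/2}/k_{min}^{3/2}$, these weights must be tracked rather than left implicit.
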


\begin{remark}
In the particular case where Assumption 1 might not hold, we can still prove cf. \cite{Article2} a similar estimate to (\ref{estim_eff}) but with an efficiency constant multiplied by $\sqrt{k_{max}}/\sqrt{k_{min}}$. It is worth noting that we have not noticed any influence of Assumption 1 in the numerical experiments, including the petal-shaped domain of \Cref{ex3} which presents a complex mesh/interface geometry.
\end{remark}
\begin{theorem}\label{thrm: bound_eta_effi} 
Let $ T\in \T^\Gamma $ and $F\in \F^\Gamma$. There exist positive constants $\tilde{C}_T$ and $C_F$ such that
	\begin{equation}
		\tilde \eta_T \lesssim \tilde{C}_T\|u-u_h\|_{h,T}, \qquad \eta_F \lesssim C_{F} \left(\|u-u_h\|_{h,\Delta_F}+\epsilon(\Delta_F)\right),
	\end{equation}
where $\displaystyle \tilde{C}_T= \frac{h_T}{\sqrt{h_T^{min}|\Gamma_T|}}$ and $\displaystyle C_F=\max_{T\in \Delta_F\cap \T^{\Gamma}}\frac{h_T}{|\Gamma_T|}\frac{k_{max}^{3/2}}{k_{min}^{3/2}}$.
\end{theorem}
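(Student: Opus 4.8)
The plan is to handle the two estimators by entirely different mechanisms: the bound for $\tilde\eta_T$ is a short algebraic manipulation, whereas $\eta_F$ requires an equilibration argument localized to the cut edge. For $\tilde\eta_T$ I would exploit that the exact solution satisfies $[u]=0$ on $\Gamma$, so that on $\Gamma_T$ one has $[u_h]=[u_h-u]$ and therefore $\|[u_h]\|_{\Gamma_T}=\|[u-u_h]\|_{\Gamma_T}$. Since the interface contribution $\frac{k_\Gamma}{h_T}\|[u-u_h]\|_{\Gamma_T}^2$ is one of the nonnegative terms making up $\|u-u_h\|_{h,T}^2$, this gives $\|[u_h]\|_{\Gamma_T}^2\le \frac{h_T}{k_\Gamma}\|u-u_h\|_{h,T}^2$. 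Inserting this into the definition of $\tilde\eta_T$ cancels the factor $k_\Gamma$ and one power of $h_T$, leaving precisely
\[
\tilde\eta_T\le \frac{h_T}{\sqrt{h_T^{min}\,|\Gamma_T|}}\,\|u-u_h\|_{h,T}=\tilde C_T\,\|u-u_h\|_{h,T};
\]
no bubble functions or inverse inequalities are needed, and even the hidden constant equals $1$.

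For $\eta_F$ the starting point is that, since $g=0$, the exact flux $\sigma=K\nabla u$ lies in $\Hdiv$ and hence has single-valued normal trace across every interior edge, $\jump{\sigma\cdot n_F}=0$; consequently $\jump{\sigma_h\cdot n_F}=\jump{(\sigma_h-\sigma)\cdot n_F}$. I would then test this jump against a function $w$ supported on the two triangles $T_F^-\cup T_F^+$ sharing $F$ and vanishing on the remainder of their boundary, and integrate by parts on each (sub)triangle. Using the conservation property of Theorem \ref{thrm: conservation_prop_iRT}, namely $\div\sigma_h=-\pi^0_T f$, together with $\div\sigma=-f$ and $[(\sigma_h-\sigma)\cdot n_\Gamma]=0$ (so that no $\Gamma_T$ term survives), this yields
\[
\int_F\jump{\sigma_h\cdot n_F}\,w\,\ds=\sum_{T\in\{T_F^-,T_F^+\}}\int_T\Big((f-\pi^0_T f)\,w+(\sigma_h-\sigma)\cdot\nabla w\Big)\dx.
\]
The oscillation term reproduces $\epsilon(\Delta_F)$, while splitting $\sigma_h-\sigma=K^{1/2}\tau_h-K\nabla_h(u-u_h)$ on each $T^i$ gives $\|\sigma_h-\sigma\|_T\lesssim k_{max}^{1/2}\big(\eta_T+\|u-u_h\|_{h,T}\big)$, and the flux-error part $\eta_T$ is absorbed through the already-established local efficiency of the main estimator (Theorem \ref{thrm: efficiency_IRT}).

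The main obstacle is the choice of the test function $w$ on the cut edge. Because $\sigma_h\cdot n_F$ is only piecewise constant on $F$, the jump $\jump{\sigma_h\cdot n_F}$ is piecewise constant on the two sub-edges $F^1,F^2$ and has zero mean, $\int_F\jump{\sigma_h\cdot n_F}\,\ds=0$; consequently a single edge bubble is orthogonal to it and is useless. One must instead build a sub-edge-resolving weight (the piecewise-constant jump itself, multiplied by an edge bubble) so that $\int_F\jump{\sigma_h\cdot n_F}\,w\,\ds\gtrsim\kappa_F\,\|\jump{\sigma_h\cdot n_F}\|_F^2$, and then bound $\|w\|_T$ and $\|\nabla w\|_T$ by $\|\jump{\sigma_h\cdot n_F}\|_F$ with the correct powers of $h_F$ via trace and inverse estimates. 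The delicate point is that the effectiveness $\kappa_F$ of such a weight degenerates when $\Gamma$ meets $F$ close to one of its endpoints, i.e.\ when one sub-edge is short; quantifying this degeneration is exactly what produces the non-robust geometric factor $h_T/|\Gamma_T|$, while the weighted flux-error split and the coercivity weights, after a careful accounting, combine to the coefficient factor $k_{max}^{3/2}/k_{min}^{3/2}$ in $C_F$. Collecting the estimates, dividing by $\kappa_F\,\|\jump{\sigma_h\cdot n_F}\|_F$ and rescaling by $\sqrt{h_F/k_\Gamma}$ then delivers the claimed bound for $\eta_F$.
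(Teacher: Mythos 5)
First, note that the paper itself contains no proof of this theorem: it is stated as a result established in the companion work \cite{Article2}, so your attempt can only be judged on its own merits and against what the stated constants force the argument to accomplish.

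Your treatment of $\tilde\eta_T$ is correct and complete: since $\gjump{u}=0$ on $\Gamma$, $\|[u_h]\|_{\Gamma_T}=\|[u-u_h]\|_{\Gamma_T}\le (h_T/k_\Gamma)^{1/2}\|u-u_h\|_{h,T}$, and substitution into the definition of $\tilde\eta_T$ gives exactly $\tilde C_T=h_T/\sqrt{h_T^{min}|\Gamma_T|}$ with hidden constant $1$. This is the natural (and surely the intended) argument.

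The $\eta_F$ part has a genuine gap, located exactly where you flag the ``main obstacle,'' and it is not merely a matter of bookkeeping. Your sub-edge-resolving weight $w$ faces a dichotomy. If $w$ is to be $H^1$ on each of $T_F^\pm$ and continuous across $\Gamma$ (which is what your claim that ``no $\Gamma_T$ term survives'' requires), then its trace on $F$ lies in $H^{1/2}(F)$ and cannot reproduce the jump discontinuity of $\jump{\sigma_h\cdot n_F}$ at the cut point; the admissible fix (piecewise edge bubbles vanishing also at the cut point) has effectiveness degenerating like powers of $h_F/\min_i|F^i|$, i.e.\ like $h_T^{min}$. But the stated constant $C_F=\max_{T\in\Delta_F\cap\T^\Gamma}(h_T/|\Gamma_T|)\,k_{max}^{3/2}/k_{min}^{3/2}$ contains no $h_T^{min}$, and $\min_i|F^i|\to 0$ is compatible with $|\Gamma_T|=O(h_T)$ (interface entering $F$ near a vertex and traversing a long chord), so your route proves a strictly weaker bound than the theorem asserts -- your claim that quantifying the degeneration ``exactly produces $h_T/|\Gamma_T|$'' is an assertion the bubble construction does not deliver. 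If instead you let $w$ be discontinuous across $\Gamma_T$ so that its trace on $F$ can resolve the sub-edges, the interface term $\int_{\Gamma_T}(\sigma_h-\sigma)\cdot n_\Gamma\,[w]\,ds$ survives (single-valuedness of the normal trace kills it only for continuous $w$), and it contains $K\nabla(u-u_h)\cdot n_\Gamma$ on $\Gamma_T$, an $L^2$-interface quantity that the norm $\|u-u_h\|_{h,\Delta_F}$ does not control without extra regularity of $u$. Two further, smaller issues: absorbing the flux-error part through Theorem~\ref{thrm: efficiency_IRT} silently imports Assumption~\ref{ass: 1}, which the present theorem does not assume; and the factor $k_{max}^{3/2}/k_{min}^{3/2}$ is announced (``after a careful accounting'') rather than derived. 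The very form of $C_F$ suggests the intended mechanism is different: rather than $L^2$-duality with a bubble, one should exploit that on a cut edge the jump is determined algebraically by the mismatch of the two one-sided $\RT^0$ pieces of the $\IRT^0$ functions, split $\sigma_h-K\nabla_h u_h$, use the ghost-penalty jumps $\jump{\partial_n u_{h,i}}$ already present in $\|u-u_h\|_{h,\Delta_F}$, and invoke trace/inverse estimates on cut elements, whose norm-equivalence constants for $\IRT^0$ are where $h_T/|\Gamma_T|$ and the $k$-ratio arise.
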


Theorems \ref{thrm: efficiency_IRT} and \ref{thrm: bound_eta_effi} assert the local efficiency, with explicit bounds of the efficiency constants. On a cut cell $T$ and a cut edge $F$, $C_T$ and $C_F$ depend in theory on the ratio $k_{max}/k_{min}$; however, the numerical behavior of the global estimator appears to be quite robust with respect to this ratio, as shown in Figure \ref{fig:Conv_Ellipse} where we successfully tested a ratio of $10{,}000$. On the cut elements, the three efficiency constants also depend on the ratio $h_T/|\Gamma_T|$, which is $O(1)$ for most elements and again does not seem to influence the numerical tests.

\section{Numerical simulations}\label{sec:num_sim}

We present several numerical experiments to illustrate the theoretical results established in the previous sections. The numerical implementation is based on the open-source library FEniCS, along with the CutFEM library developed by Farina et al.~\cite{farina2021cut}, which is built based on FEniCS. Additional technical details regarding the challenges of implementing flux reconstruction on cut elements are provided in Appendix~\ref{appendix}.

For the stabilization parameters in the discrete problem, we set \(\gamma = 10\) and \(\gamma_g = 0.1\). The mesh refinement follows D\"orfler's marking strategy~\cite{dorfler1996convergent}, i.e. we look for the set of elements \(\mathcal T_h^m\) with minimal cardinal such that  \( \theta \eta(\mathcal T_h)^2 \leq \eta(\mathcal T_h^m)^2\). In the adaptive mesh refinement (AMR) procedure, the marking percent \(\theta\) is set to be 35\%, i.e. the ordered elements that account for the top 35\% of the total error estimator get refined. Although the reliability bound is established for the global error estimator $\eta+\eta_{\Gamma}$, we have observed in \cite{Article2} that the numerical results obtained when using $\eta+\eta_{\Gamma}$ (and the corresponding error indicator $\eta_T+\tilde \eta_T+\sum_{F\in \F^{\Gamma}\cap \partial T}\eta_F$ for any $T\in \T$) in the AMR procedure are very similar to those obtained with the estimator $\eta$ (and the indicator $\eta_T$) alone. The implementation of $\eta_\Gamma$ is more technical, and its use is also more expensive; therefore, in the following tests we employ \(\eta_T \) as error indicator in the AMR procedure, and $\eta$ as global error estimator.

In the following, we present three test cases.  All convergence curves are displayed on a log-log scale.

\begin{example}[Ellipse problem]\label{ex1}

Let $\Omega=[-1,1]^2$ and let $\Gamma$ be the ellipse centered at the origin of equation $\rho=1$, where $\rho ={\sqrt{\frac{x^2}{a^2}+\frac{y^2}{b^2}}}$ with $2a$ the width and $2b$ the height of the ellipse. Here, we take $a= \displaystyle \frac{\pi}{6.18}$ and $b=1.5a$. The exact solution of \eqref{eq: continuous_problem_weak} with $g=0$ is given by
\begin{equation*}
    u(x,y)=\left\{\begin{array}{ll}
         \dfrac{1}{k_1}\rho^p&  \text{if }\rho\leq 1 \\
        \dfrac{1}{k_2}\rho^p+\dfrac{1}{k_1}-\dfrac{1}{k_2} & \text{if }\rho>1
    \end{array}\right.,
\end{equation*}
where $p=5$. The diffusion coefficients in the two subdomains are $k_1=1$ (in the interior $\Omega^1$ of the ellipse) and $k_2=\mu k_1$, with $\mu>0$ a parameter that we let vary in the numerical experiments. 
\end{example}
 
We begin by testing the convergence rate of the flux reconstruction error between  the exact flux \(\sigma = K \nabla u\) and  the recovered flux \(\sigma_h\), that is \(\Vert K^{-1/2}(\sigma - \sigma_h) \Vert_{\Omega}\). In this test, we set \(\mu = 1\), so that \(k_1 = k_2\), and consider a smooth solution \(u \in H^2(\Omega)\). A uniform mesh refinement is applied. As shown in Figure~\ref{fig:FluxError_Ellipse}, we observe the expected optimal convergence rate \(O(h) = O(N^{-1/2})\) for both the (weighted) energy norm error \(\Vert K^{1/2} \nabla_h(u - u_h) \Vert_{\Omega}\) and the flux reconstruction error.

Figure~\ref{fig:Meshes_Ellipse} displays a sequence of adaptively refined meshes for \(\mu = 100\), from the initial mesh to the final one at iteration 15. The adaptive mesh refinement (AMR) procedure is terminated when the total number of degrees of freedom \(N\) reaches 30{,}000. In Figure~\ref{fig:Conv_Ellipse}, we report the convergence results for different values of \(\mu\), ranging from 10 to 10{,}000. In all cases, we observe the optimal convergence rate \(O(N^{-1/2})\) for both the weighted energy norm error and the global estimator $\eta$. These results confirm the robustness of the method with respect to the jump in the diffusion coefficients and validate the theoretical analysis.

 \begin{figure}[H]
    \centering
    \includegraphics[width=0.7\textwidth]{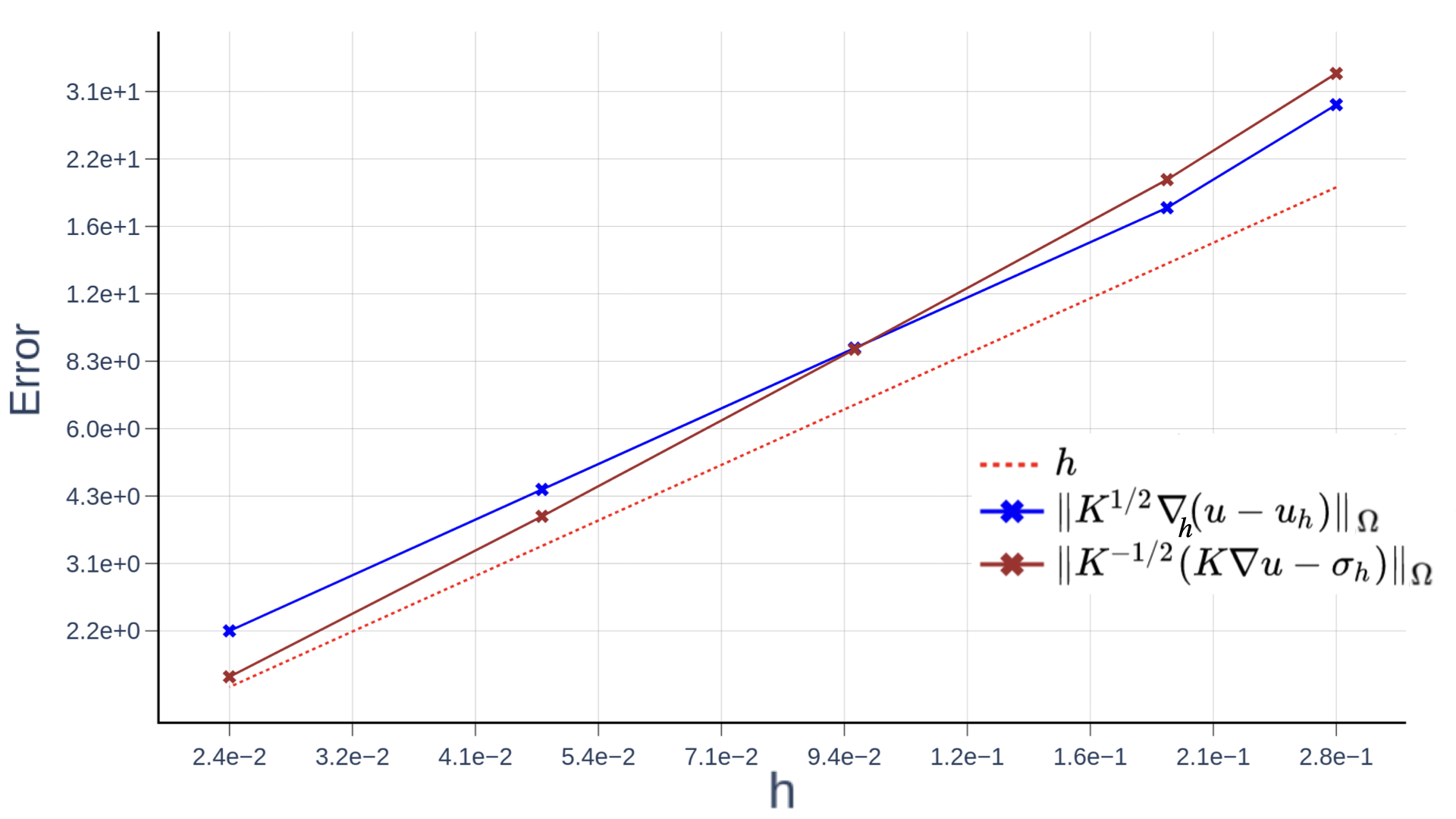}
    \caption{\Cref{ex1}. Convergence of errors for $\mu=1$ with uniform refinement}
    \label{fig:FluxError_Ellipse}
\end{figure}

\begin{figure}[ht!]
    \centering
    \begin{subfigure}[t]{0.45\textwidth}
        \includegraphics[width=1\textwidth]{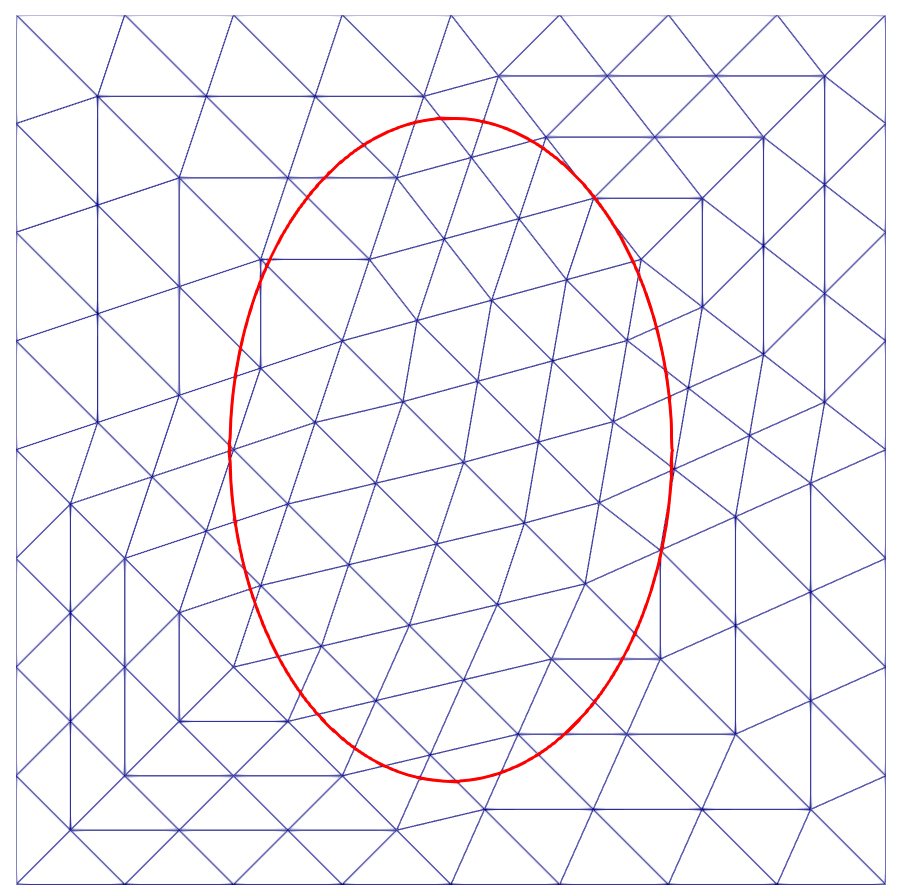}
        \caption{Initial mesh (iteration 0) }
    \end{subfigure}%
    \begin{subfigure}[t]{0.45\textwidth}
        \centering
        \includegraphics[width=1\textwidth]{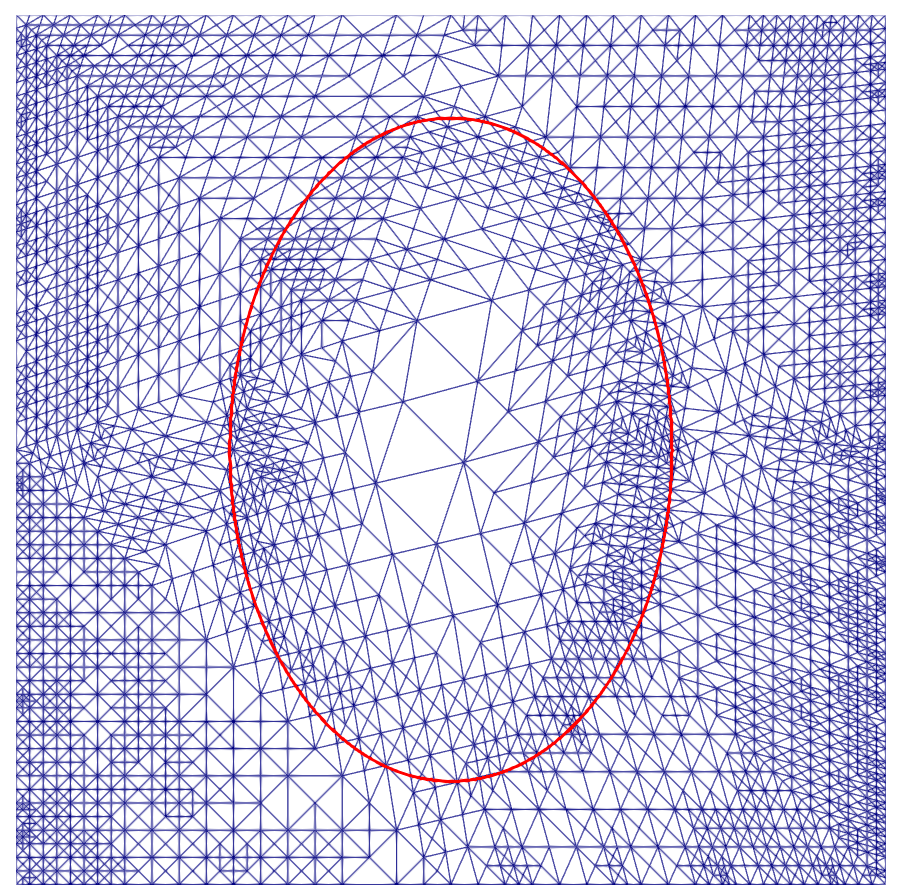}
        \caption{Iteration 9}
    \end{subfigure}
     \begin{subfigure}[t]{0.45\textwidth}
        \centering
        \includegraphics[width=1\textwidth]{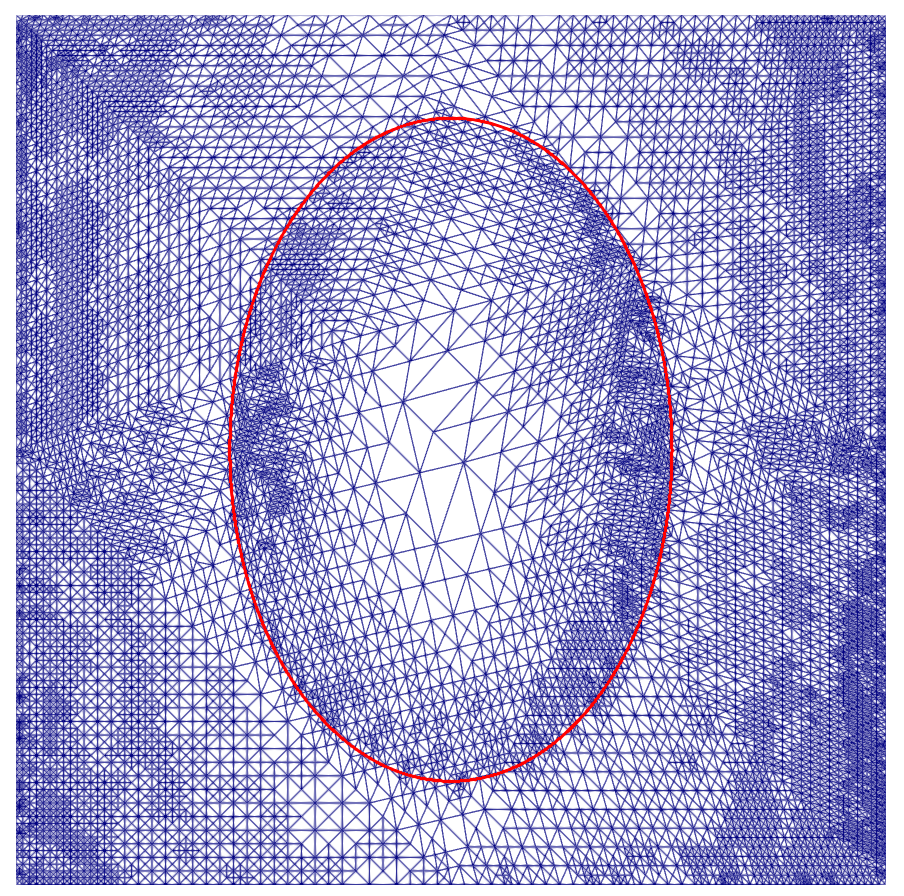}
        \caption{Iteration 13}
    \end{subfigure}
     \begin{subfigure}[t]{0.45\textwidth}
        \centering
        \includegraphics[width=1\textwidth]{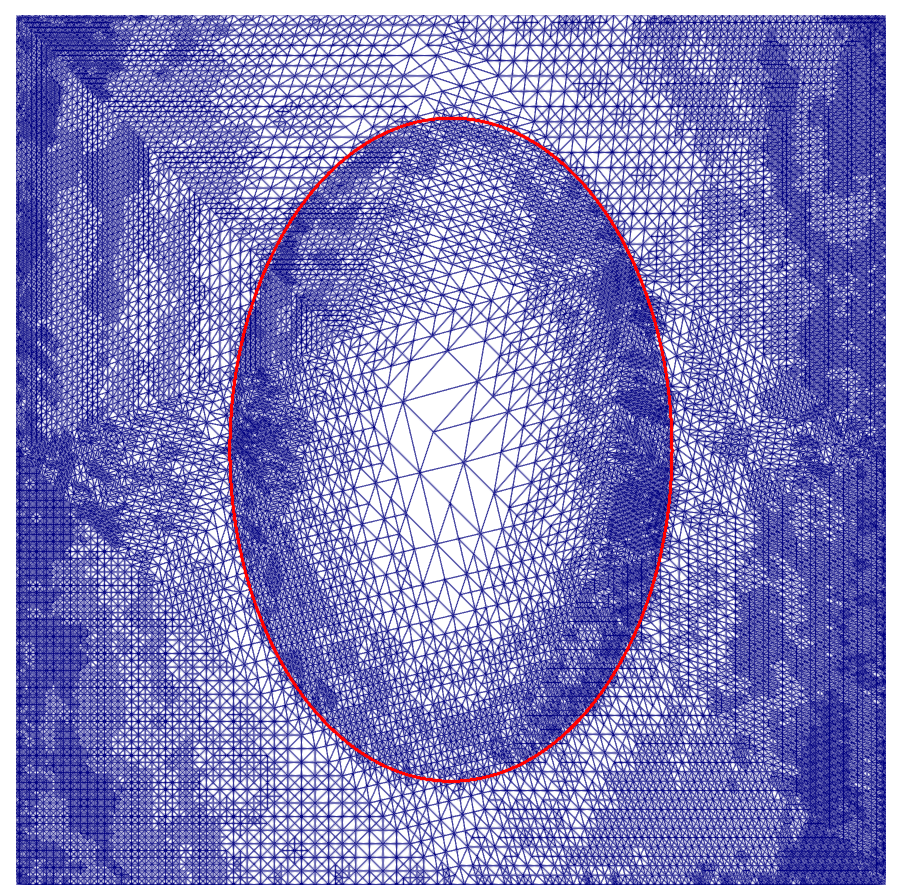}
        \caption{Iteration 15 (Final mesh)}
    \end{subfigure}
\caption{\Cref{ex1}. Sequence of adapted meshes for $\mu= 100$}
\label{fig:Meshes_Ellipse}
\end{figure}   

\begin{figure}[htp!]
 	\begin{subfigure}[t]{0.5\textwidth}
        \includegraphics[width=1\textwidth]{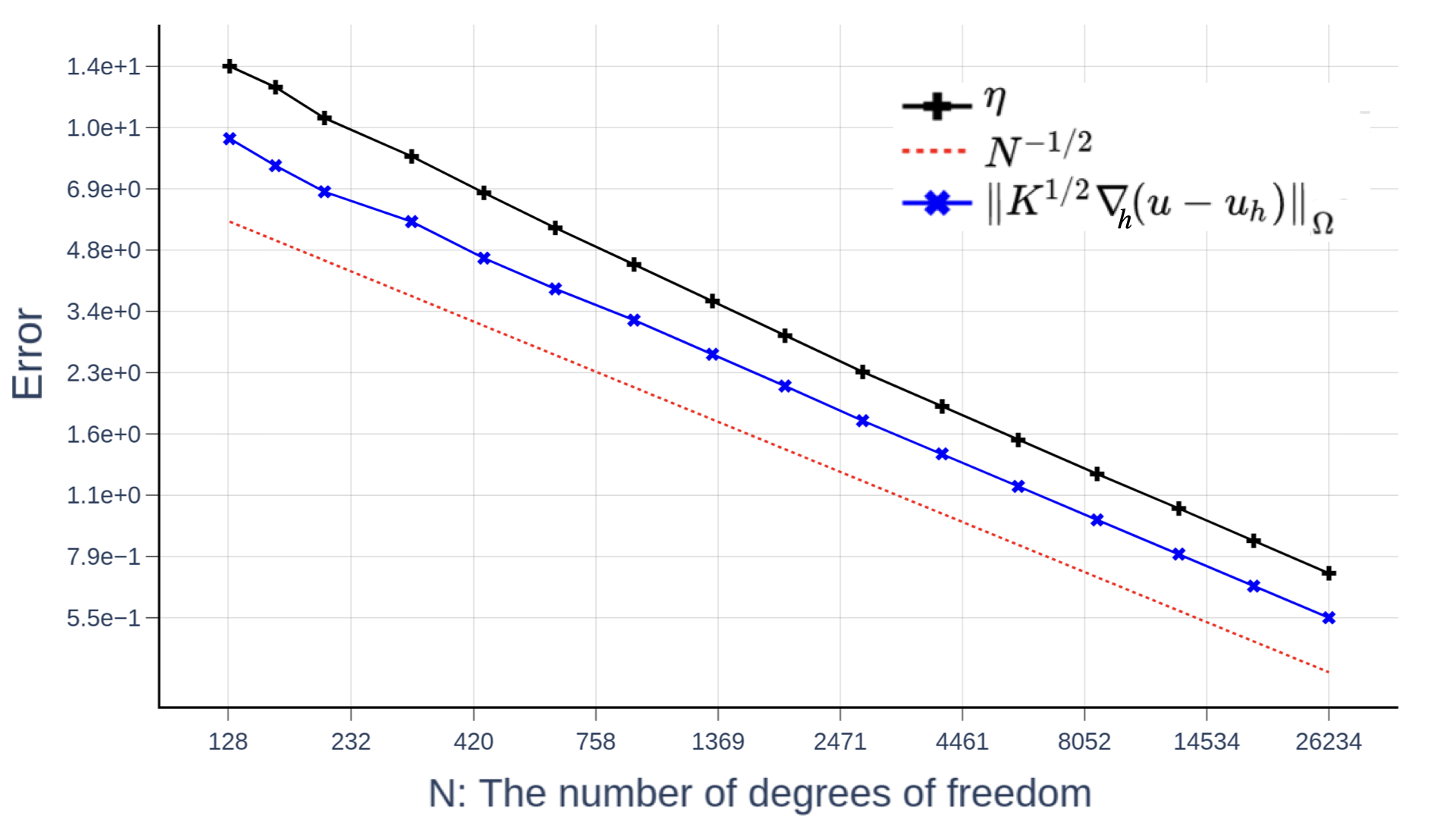}
        \caption{$\mu=10$}
         \end{subfigure}%
         \begin{subfigure}[t]{0.5\textwidth}
        \includegraphics[width=1\textwidth]{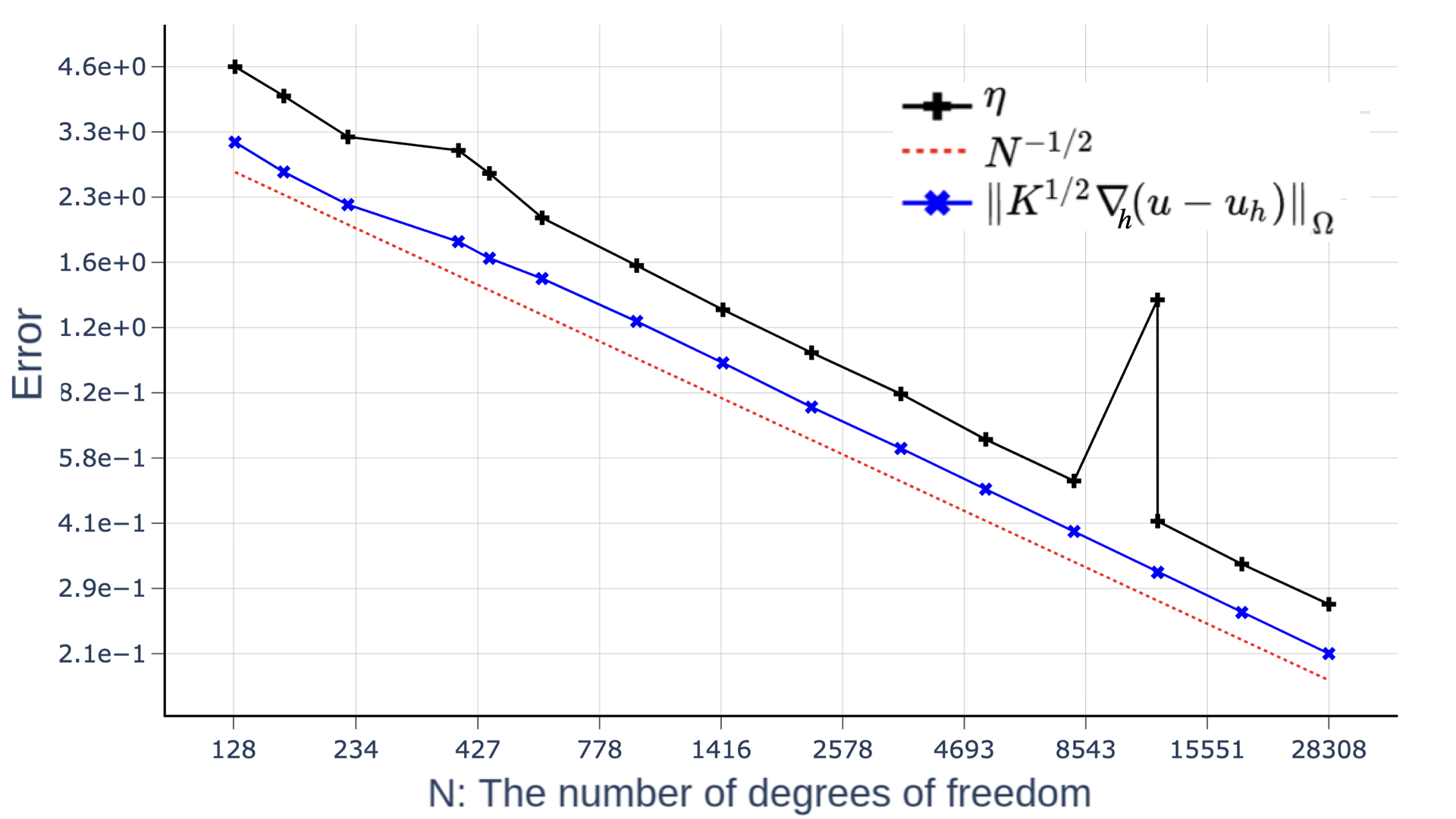}
        \caption{$\mu=100$}
         \end{subfigure}\\%
         \begin{subfigure}[t]{0.5\textwidth}
        \includegraphics[width=1\textwidth]{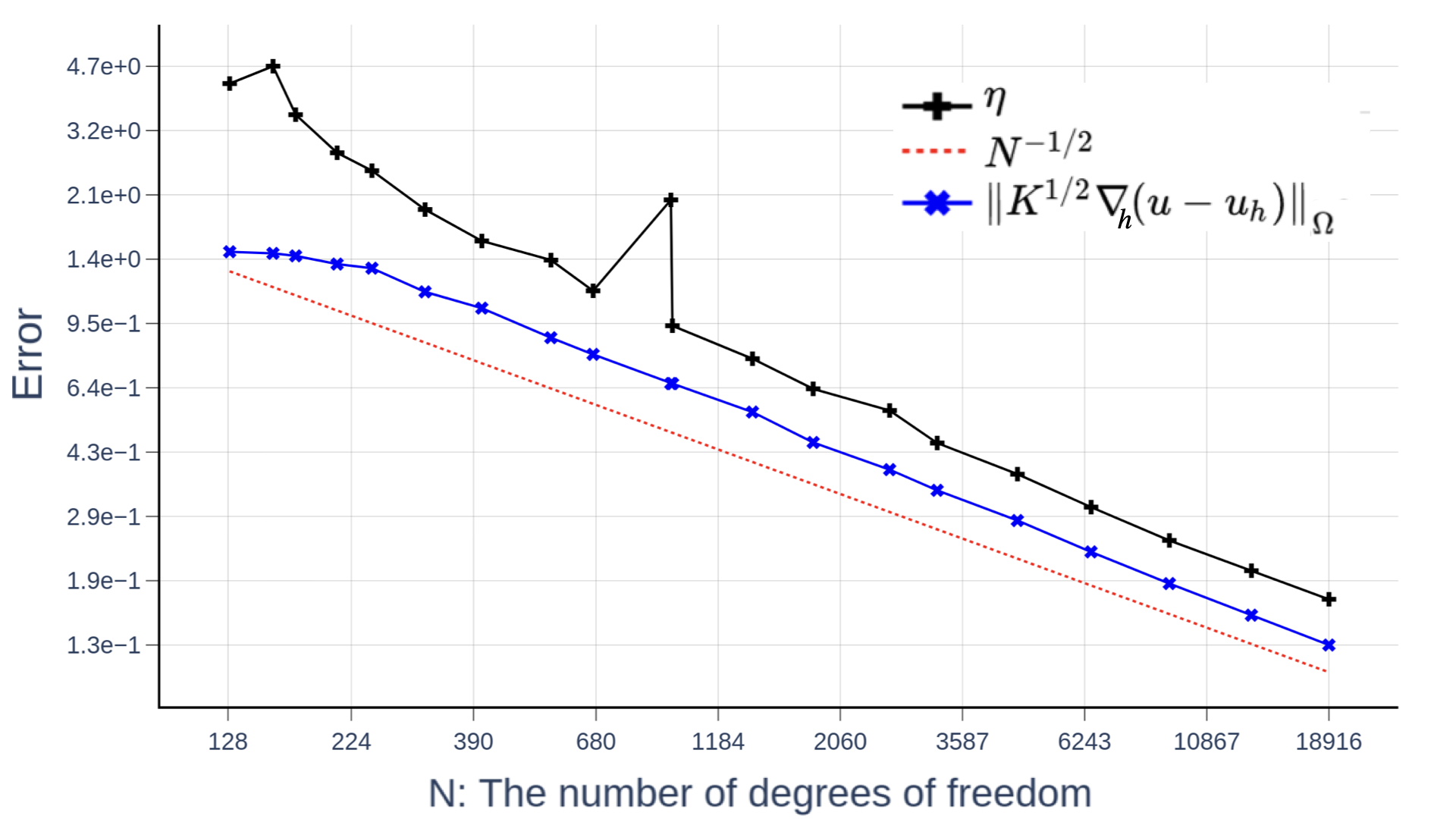}
        \caption{$\mu=1000$}
         \end{subfigure}%
         \begin{subfigure}[t]{0.5\textwidth}
        \includegraphics[width=1\textwidth]{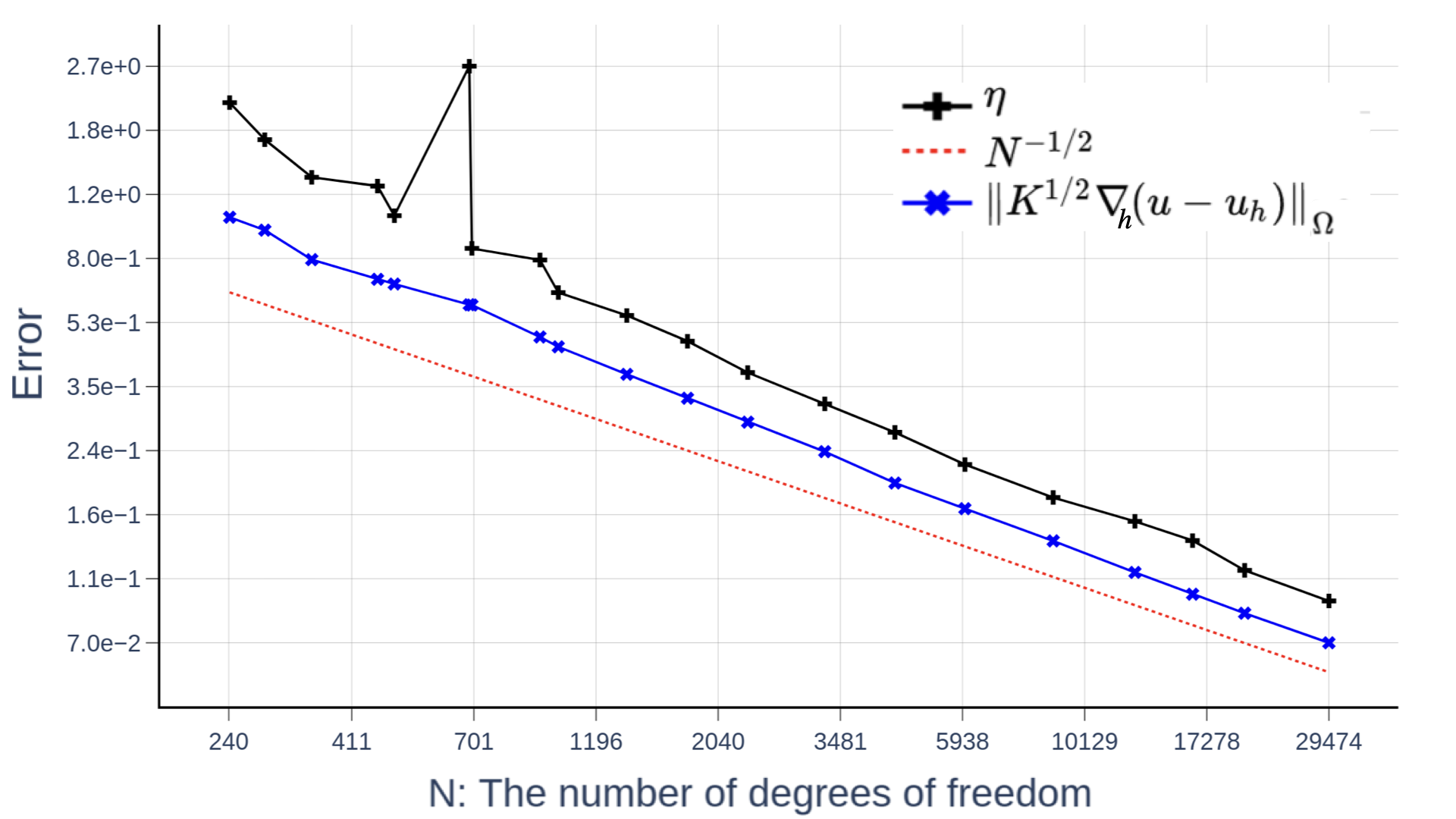}
        \caption{$\mu=10000$}
         \end{subfigure}%
    \caption{\Cref{ex1}. Convergence of the energy error and the error estimator for different $\mu$}
    \label{fig:Conv_Ellipse}
\end{figure}

\begin{example}[L--shaped problem]\label{ex2}
We now consider the L-shaped domain test case, see for instance \cite{Bonito_Devore_Nochetto}. The domain is $\Omega=[-5,5]\times[-5,5]\backslash [0,5]\times [-5,0]$ and presents again an interface, the circle centered at the origin and of radius $\rho_0=2\sqrt{2}$. The exact solution is given in polar coordinates $(\rho,\theta)$ by:
\begin{equation*}
    u(\rho,\theta)=\left\{\begin{array}{ll}
       \rho^{2/3}\sin({2\theta}/{3}),  & \text{if } \rho\leq \rho_0  \\
     \rho_{0}^{2/3}\sin({2\theta}/{3}) + \dfrac{2}{3\mu}\rho_{0}^{-1/3}\sin({2\theta}/{3})(\rho -\rho_0)  & \text{otherwise}
    \end{array}\right.,
\end{equation*}
whereas the diffusion coefficient is equal to $1$ inside the circle (in $\Omega^1$) and to $\mu$ outside the circle (in $\Omega^2$). We take here $\mu=5$.  
\end{example}

Figure~\ref{fig:Meshes_Lshape} shows a sequence of adaptively refined meshes. In this test, the AMR procedure is stopped when the total number of degrees of freedom \(N\) reaches 60{,}000. As expected, refinement occurs both near the interface and around the reentrant corner, where the solution exhibits a singularity. Figure~\ref{fig:Conv_Lshape} illustrates that both the weighted energy norm error and the a posteriori error estimator $\eta$ converge optimally at the rate \(O(N^{-1/2})\).

\begin{figure}[ht!]
    \centering
    \begin{subfigure}[t]{0.45\textwidth}
        \includegraphics[width=1\textwidth]{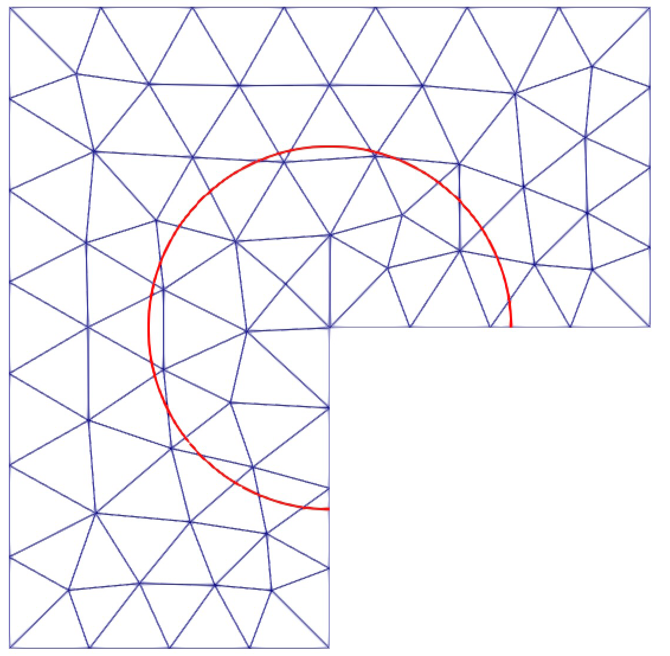}
        \caption{ Initial mesh (iteration 0)}
    \end{subfigure}
    \hspace{.3cm}
    \begin{subfigure}[t]{0.45\textwidth}
        \includegraphics[width=1\textwidth]{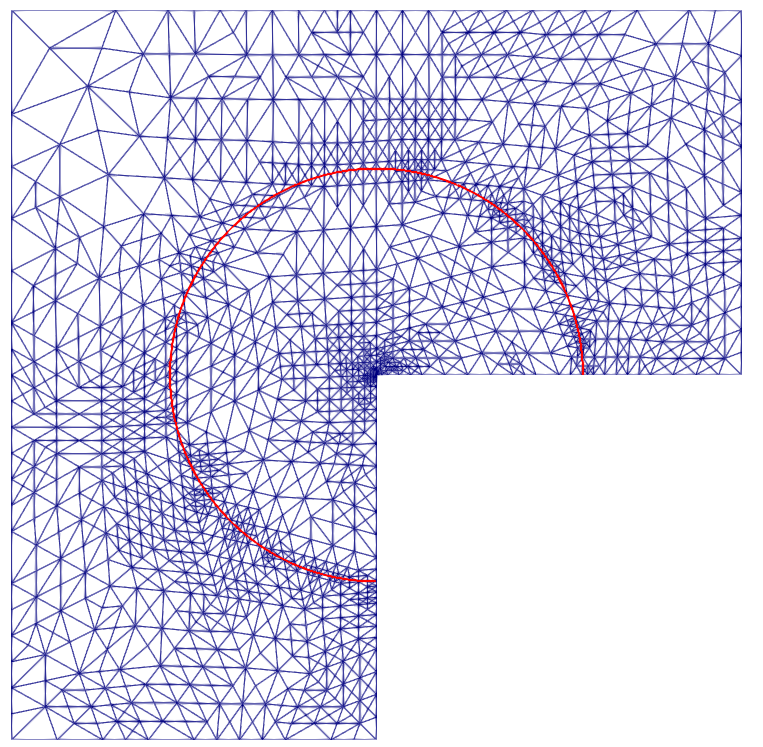}
        \caption{Iteration 10}
    \end{subfigure}\\
    \begin{subfigure}[t]{0.45\textwidth}
        \includegraphics[width=1\textwidth]{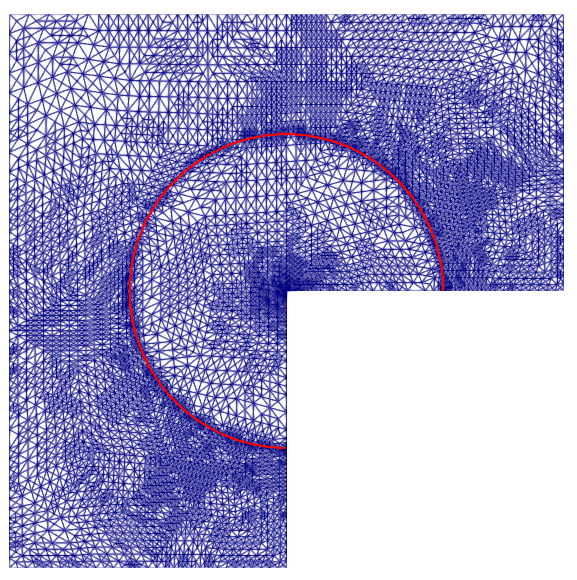}
        \caption{Iteration 15}
    \end{subfigure}
    \hspace{.3cm}
    \begin{subfigure}[t]{0.45\textwidth}
        \centering
        \includegraphics[width=1\textwidth]{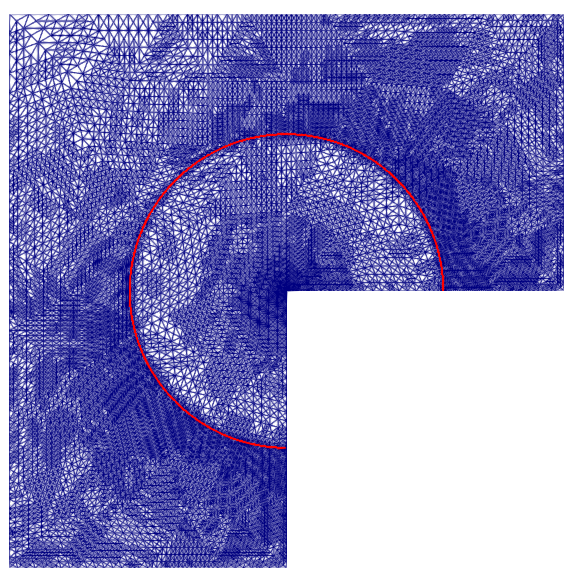}
        \caption{Iteration 18 (final mesh)}
    \end{subfigure}
    \caption{\Cref{ex2}. Sequence of adapted meshes}
    \label{fig:Meshes_Lshape}
\end{figure}
\begin{figure}[ht!]
\centering
        \includegraphics[width=0.7\textwidth]{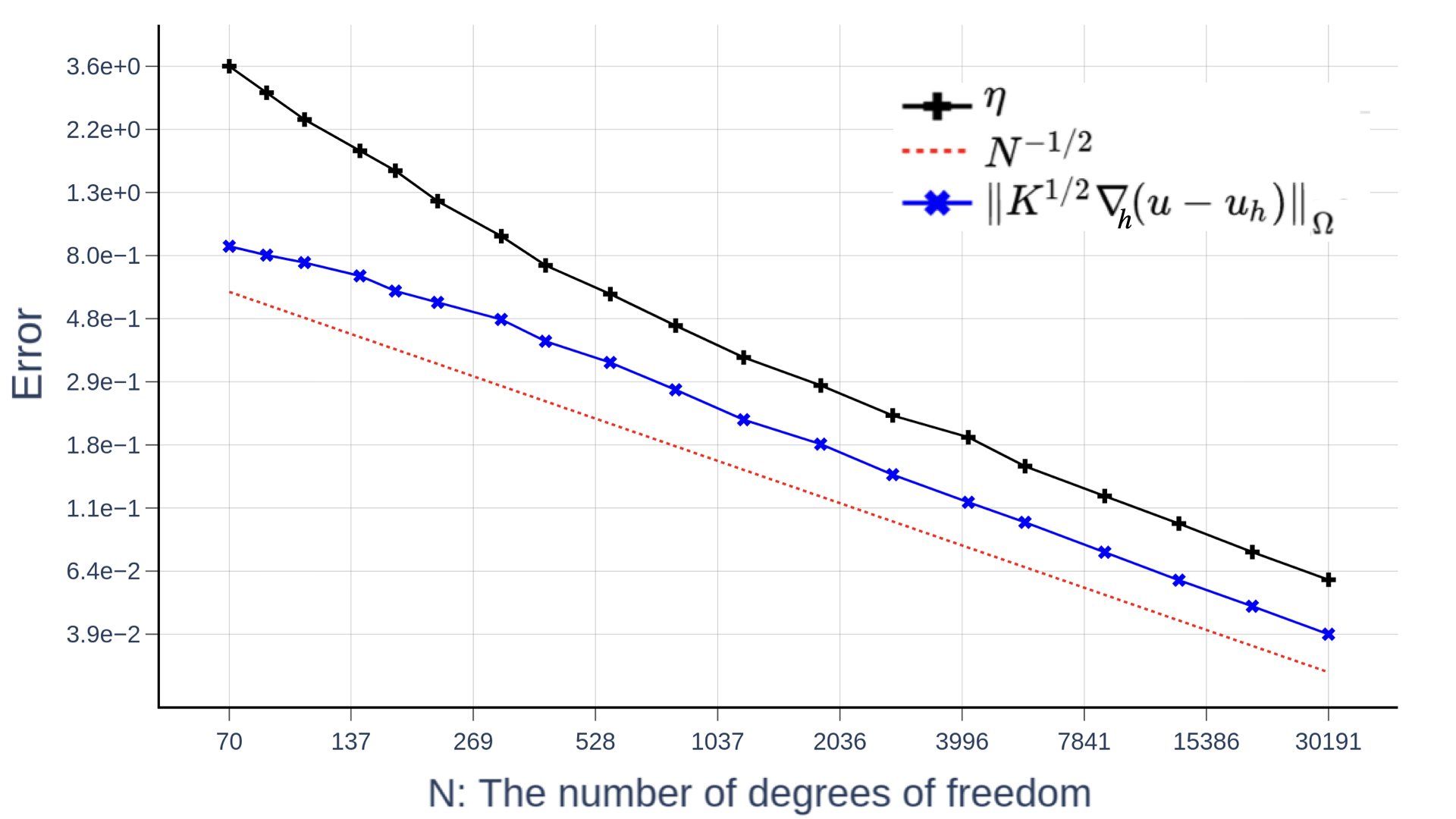}
\caption{\Cref{ex2}. Convergence of the energy error and the error estimator}
\label{fig:Conv_Lshape}
\end{figure}

\begin{example}[Petal-shaped  problem] \label{ex3}
Finally, we consider an interface problem characterized by a complex interface shape. The exact solution is described by a petal-shaped interface and is defined using the following level set function:
\[
u(x, y) = 
\begin{cases} 
 \phi(x, y), & \text{if } \phi(x, y) < 0 \\
\dfrac{1}{\mu} \phi(x, y), & \text{if } \phi(x, y) \geq 0,
\end{cases}
\qquad \forall (x, y) \in \Omega = [-1, 1]^2
\]
with $\mu=100$. Here, the level set function \(\phi(x, y)\) is given by:
\[
\phi(x, y) = (x^2 + y^2)^2 \left(1 + 0.5 \sin\left(12 \tan^{-1}\left(\frac{y}{x}\right)\right)\right) - 0.3.
\]
\end{example}
The AMR stopping criterion we set for this example is that the total number of degrees of freedom \(N\) remains below 20{,}000.  Figure~\ref{fig:Meshes_Petal} shows a sequence of adaptively refined meshes, where significant refinement is observed around the interface. This behavior is likely due to the higher curvature of the interface in this example.

\begin{figure}[ht!]
    \centering
    \begin{subfigure}[t]{0.45\textwidth}
        \includegraphics[width=1\textwidth]{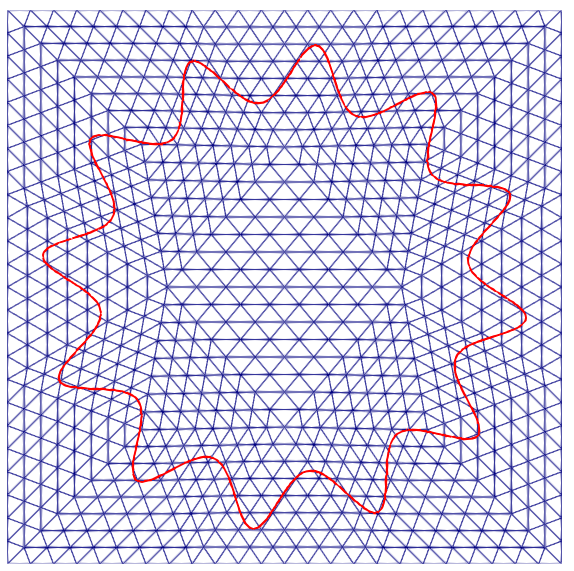}
        \caption{Initial mesh (iteration 0)}
    \end{subfigure}%
    \begin{subfigure}[t]{0.45\textwidth}
        \centering
        \includegraphics[width=1\textwidth]{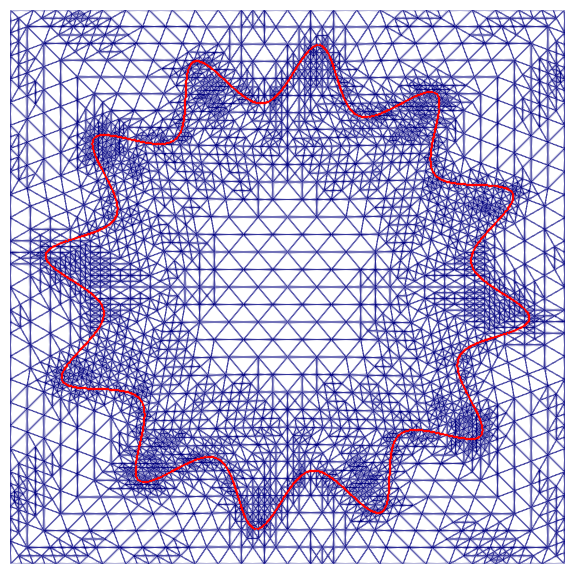}
        \caption{Iteration 5}
    \end{subfigure}\\ 
     \begin{subfigure}[t]{0.45\textwidth}
        \centering
        \includegraphics[width=1\textwidth]{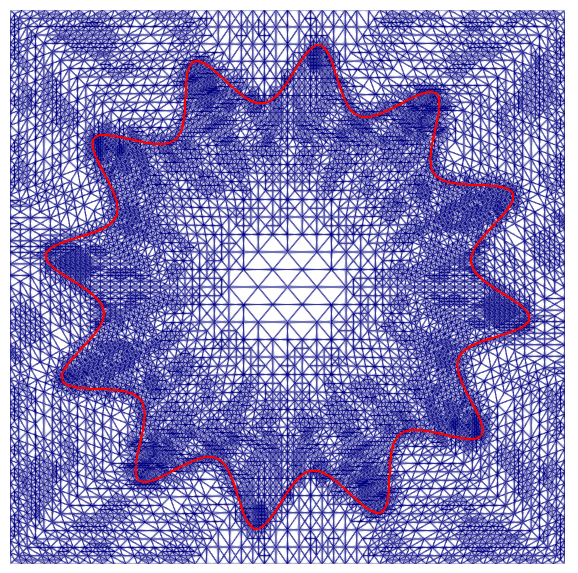}
        \caption{Iteration 8}
         \end{subfigure}
     \begin{subfigure}[t]{0.45\textwidth}
        \centering
        \includegraphics[width=1\textwidth]{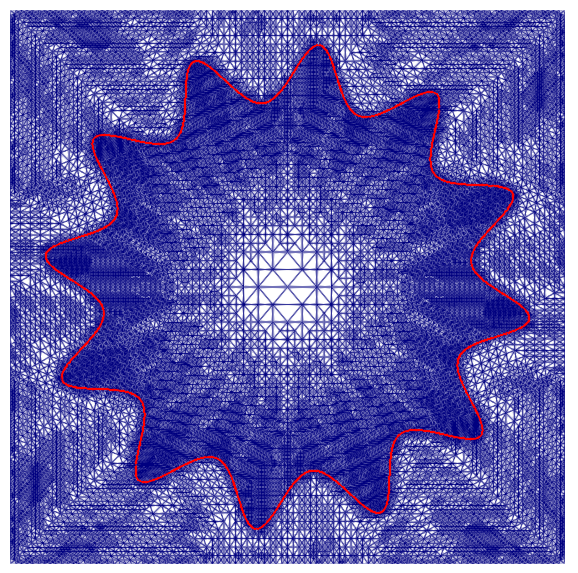}
        \caption{Iteration 10 (final mesh)}
    \end{subfigure}
    \caption{\Cref{ex3}. Sequence of adapted meshes}
    \label{fig:Meshes_Petal}
\end{figure}

The convergence plot of Figure \ref{fig:Conv_Petal} indicates the optimal rate decay $O(N^{-1/2})$ for both the error and the a posteriori error estimator $\eta$.
\begin{figure}[ht!]
\centering
    \includegraphics[width=0.7\textwidth]{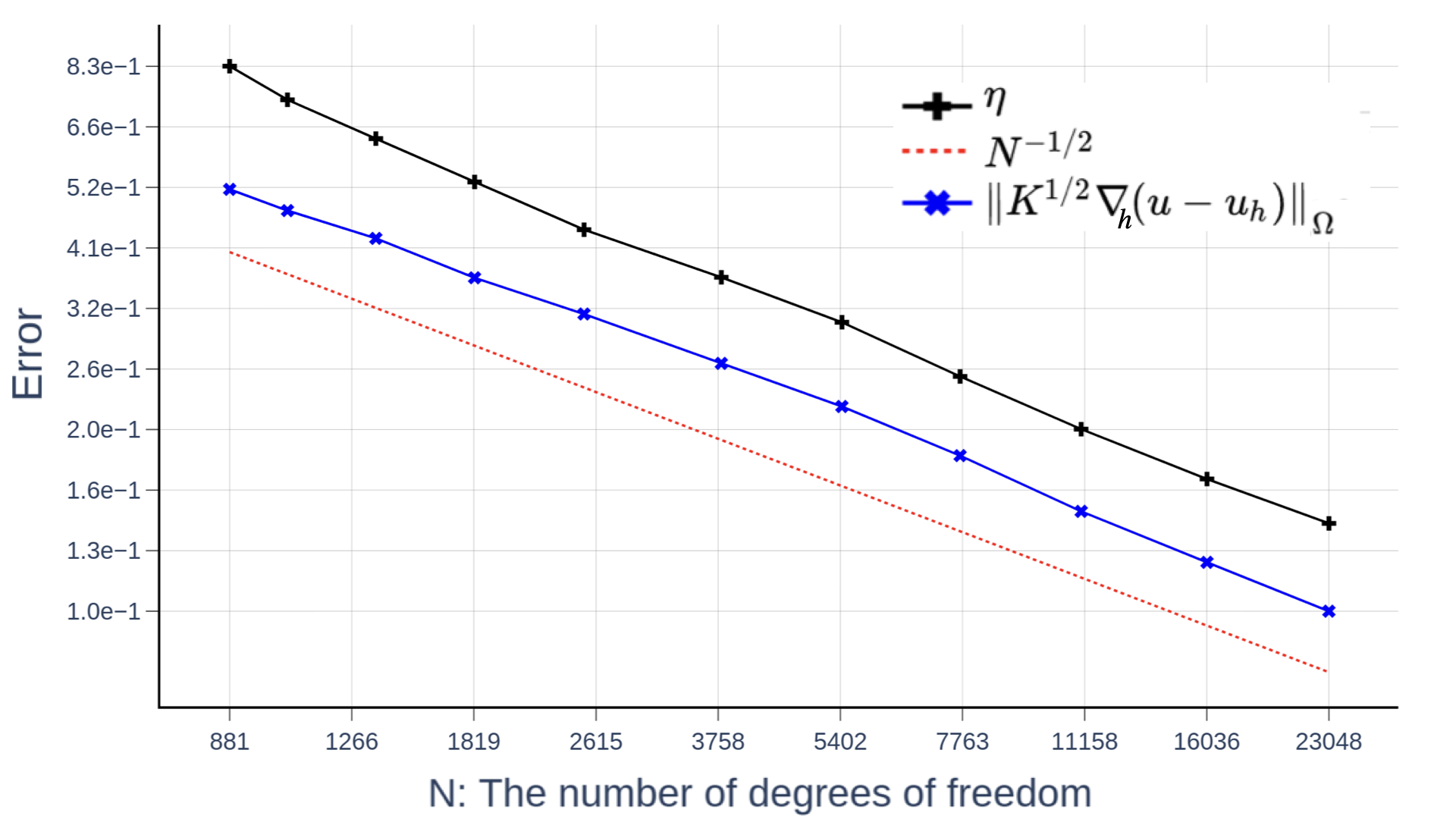}      
    \caption{\Cref{ex3}. Convergence of the energy error and the error estimator}
    \label{fig:Conv_Petal}
\end{figure}

\appendix
\section{Numerical implementation of the flux on cut elements}\label{appendix}
Let $ T\in \T^\Gamma$ and $\sigma_h \in \mathcal{IRT}^0(T)$. There exists a couple $(\sigma_{h,1},\sigma_{h,2}) \in \mathcal{RT}^0(T)\times \mathcal{RT}^0(T)$ such that 
\begin{equation} \label{first_condition}
	[\sigma_h\cdot n_\Gamma]={\sigma_{h,1}\cdot n_\Gamma} - {\sigma_{h,2}\cdot n_\Gamma}=0,
\end{equation}
\begin{equation} \label{second_condition}
	[K^{-1}\sigma_h\cdot t_\Gamma](x_\Gamma)={k_1^{-1}\sigma_{h,1}(x_\Gamma)\cdot t_\Gamma}_{|\Gamma_T} - {k_2^{-1}\sigma_{h,2}(x_\Gamma)\cdot t_\Gamma}_{|\Gamma_T}=0, 
\end{equation}
\begin{equation}\label{last_condition}
	\mathrm div\, \sigma_{h,1}= \mathrm div \, \sigma_{h,2}.
\end{equation}

The idea for the implementation of the immersed Raviart-Thomas space is to express the degrees of freedom of $\sigma_{h,1}$ and $\sigma_{h,2} $ in terms of those of $\sigma_h$. Thus, instead of implementing the flux $\sigma_h \in \IRT^0(\T)$, which is challenging due to its discontinuity across cut edges and the available data types in FEniCS, we have chosen to implement the two functions $ \sigma_{h,1} $ and $ \sigma_{h,2} $, which belong to the standard Raviart-Thomas space. 

We recall that the local degrees of freedom of $\mathcal{RT}^0(T)$ are given in \eqref{eq: ddl_RT} and that the basis function $\Psi_j$ associated to the edge $F_j\in \partial T$ is given by 
\begin{equation*}
	\Lambda_{T,j}(x)=\frac{|F_j|}{2|T|}\overset{\longrightarrow}{A_jx},\quad 1\le j\le 3,
\end{equation*}
where $A_j$ is the vertex of $T$ opposite to $F_j$. One further has the unique decomposition:
\begin{equation*}
	\sigma_{h,i}=\sum_{j=1}^{3} N_{T,j}(\sigma_{h,i})\Lambda_{T,j}\quad (1\le i\le 2).
\end{equation*}

Since $\sigma_{h,i}\cdot n_\Gamma \in P^0(\Gamma_T)$ for $i\in \{1,2\}$, condition  \eqref{first_condition} can be written as  
\begin{equation} \label{first_condition_2}
\begin{split}
\sigma_{h,1}(x_\Gamma)\cdot n_\Gamma - \sigma_{h,2}(x_\Gamma)\cdot n_\Gamma=0 \Longleftrightarrow&\\
\sum_{j=1}^{3}|F_j|\overset{\longrightarrow}{A_jx_\Gamma}\cdot n_\Gamma (N_{T,j}(\sigma_{h,1})- N_{T,j}(\sigma_{h,2}))=0,&
\end{split}
\end{equation}
while \eqref{second_condition} yields
\begin{equation}\label{second_condition_2}
	\sum_{j=1}^{3}|F_j|\overset{\longrightarrow}{A_jx_\Gamma}\cdot t_\Gamma (k_1^{-1}N_{T,j}(\sigma_{h,1})-k_2^{-1} N_{T,j}(\sigma_{h,2}))=0.
\end{equation}
Since $\mathrm div \sigma_{h,i}\in P^0(T)$ for $i\in \{1,2\}$, condition \eqref{last_condition} is equivalent to $\displaystyle \int_T\mathrm div ( \sigma_{h,1}- \sigma_{h,2})\ dx=0$. Integration by parts further yields
\begin{equation}\label{last_condition_2}
	\sum_{j=1}^{3} |F_j|(N_{T,j}(\sigma_{h,1})- N_{T,j}(\sigma_{h,2}))=0.
\end{equation}

Denoting, for $1\le i\le 2$ and $1\le j\le 3$, the unknowns by $x_j^i:=|F_j|N_{T,j}(\sigma_{h,i})$ and the coefficients by $\alpha_j:=\overset{\longrightarrow}{A_jx_\Gamma}\cdot n_\Gamma$ and $\beta_j:=\overset{\longrightarrow}{A_jx_\Gamma}\cdot t_\Gamma$, conditions \eqref{first_condition_2}, \eqref{second_condition_2} and \eqref{last_condition_2} translate into the following linear system:
\begin{equation}\label{first system}
\left\{ \begin{array}{rl}
& \alpha_1 x_1^1+ \alpha_2 x_2^1+ \alpha_3 x_3^1 + \alpha_1 x_1^2+ \alpha_2 x_2^2+ \alpha_3 x_3^2=0\\
& k_1^{-1}\beta_1 x_1^1+ k_1^{-1}\beta_2 x_2^1+ k_1^{-1}\beta_3 x_3^1 + k_2^{-1}\beta_1 x_1^2+ k_2^{-1}\beta_2 x_2^2+ k_2^{-1}\beta_3 x_3^2=0\\
& x_1^1+x_2^1+x_3^1 -x_1^2-x_2^2-x_3^2=0.
\end{array}\right.
\end{equation}

Assume now, without loss of generality, that the non-cut edge of $T$ is $F_1$. Then one has that
\begin{equation} \label{equation 4}
N_{T,1}(\sigma_h)= \left\{
\begin{array}{ll}
		N_{T,1}(\sigma_{h,1}) \ &\text{if} \ F_1\subset \Omega^1\\
		N_{T,1}(\sigma_{h,2}) \ &\text{if} \ F_1\subset \Omega^2.
\end{array}
\right.
\end{equation}
Furthermore, for any $j\in\{2,3\}$, one has 
\begin{equation}\label{equation 5 and 6}
|F_j|N_{T,j}(\sigma_h) = \int_{F_j^1} \sigma_{h,1} \cdot n_T\ ds+ \int_{F_j^2} \sigma_{h,2} \cdot n_T\ ds
= \frac{|F_j^1|}{|F_j|} x_j^1+ \frac{|F_j^2|}{|F_j|} x_j^2.
\end{equation}
Assuming that $F_1\subset \Omega^1$ and denoting the coefficients $\omega_j^i:=\dfrac{|F_j^i|}{|F_j|}$, for $2\leq j\leq3$ and $1\leq i\leq2$, equations \eqref{equation 4} and \eqref{equation 5 and 6} can be equivalently written as follows:
\begin{equation}\label{second system}
\left\{ \begin{array}{rl}
		&x_1^1= |F_1|N_{T,1}(\sigma_h)\\
		&\omega_2^1x_2^1+ \omega_2^2x_2^2= |F_2|N_{T,2}(\sigma_h) \\
		& \omega_3^1x_3^1+ \omega_3^2x_3^2= |F_3|N_{T,3}(\sigma_h).
\end{array}\right.
\end{equation}

Finally, gathering together \eqref{first system} and \eqref{second system}, we obtain the linear system:
\begin{equation}\label{final_system}
\begin{pmatrix}
		1&0  &0  &0  &0  &0  \\
		0&\omega_2^1  &0  &0  &\omega_2^2  &0  \\
		0&0  & \omega_3^1  &0  &0  &\omega_3^2  \\
		1&1  &1  &-1  &-1  &-1  \\
		\alpha_1&\alpha_2  &\alpha_3  &-\alpha_1  &-\alpha_2  &-\alpha_3  \\
		k_1^{-1}\beta_1&k_1^{-1}\beta_2  &k_1^{-1}\beta_3  &-k_2^{-1}\beta_1  &-k_2^{-1}\beta_2  &-k_2^{-1}\beta_3 
	\end{pmatrix} 
	\begin{pmatrix}
		x_1^1 \\
		x_2^1 \\
		x_3^1\\
		x_1^2\\
		x_2^2 \\
		x_3^2
	\end{pmatrix}
	=\begin{pmatrix}
		b_1\\
		b_2\\
		b_3\\
		0\\
		0\\
		0
	\end{pmatrix}
\end{equation}
where the right-hand side term is known, thanks to the definition \eqref{eq: def Flux_non cut }-\eqref{eq: def Flux_cut } of the flux: $b_j= |F_j|N_{T,j}(\sigma_h)$ for $1\leq j\leq 3$. Solving (\ref{final_system}) allows to compute $N_{T,j}(\sigma_{h,i})$ for $1\leq j\leq 3$ and $1\le i\le 2$, and hence substitute $\sigma_h$ by two Raviart-Thomas functions.

\vsf
\hspace{-.6cm}\textit{Acknowledgments. This project has received funding from the European Union’s Horizon H2020 Research and Innovation under Marie Curie Grant Agreement N° 945416.}

\bibliographystyle{siam}
\bibliography{bibliographie}
\end{document}